\documentclass[a4paper, 11 pt, twoside]{amsart}
\usepackage[utf8]{inputenc}
\usepackage[english]{babel}
\usepackage[all]{xy}
\usepackage{url}
\usepackage{tikz}
\usepackage{enumitem}
\usepackage{xspace}
\newlist{prooflist}{description}{1}
\setlist[prooflist]{font=\normalfont \itshape, labelindent = \parindent, leftmargin = 0pt}

\RequirePackage[dvipsnames, pdftex]{xcolor}
\RequirePackage{tikz, tikz-cd, tcolorbox}
\usepackage{soul}
\setstcolor{blue}
\usepackage[unicode,bookmarks, pdftex]{hyperref}
\hypersetup{colorlinks=true,citecolor=NavyBlue,linkcolor=NavyBlue,urlcolor=Orange, pdfpagemode=UseNone, breaklinks=true}

\usepackage{amsmath}

\usepackage{amssymb}

\usepackage{amsthm}
\usepackage{mathrsfs}

\usepackage{mathtools}

\usepackage{amsfonts}

\usepackage{todonotes}

\usepackage[left = 3.5cm, right = 3.5cm, headsep = 6mm,
footskip = 10mm, top = 35mm, bottom = 35mm, footnotesep=5mm, headheight =
2cm]{geometry}

\numberwithin{equation}{section}

\newcommand{\del}{\partial}
\newcommand{\dbar}{\overline{\partial}}
\newcommand{\dbarb}{\overline{\partial}_b}
\newcommand{\SU}{\mathrm{SU}}

\newcommand{\Wedge}{\textstyle\bigwedge}
\DeclareMathOperator{\Hom}{Hom}

\newcommand{\dint}{\ \!\mathrm{d}}
\newcommand{\dext}{\mathrm{d}}

\newcommand{\Z}{\mathbb{Z}}
\newcommand{\N}{\mathbb{N}}
\newcommand{\C}{\mathbb{C}}
\newcommand{\R}{\mathbb{R}}
\newcommand{\Q}{\mathbb{Q}}

\newcommand{\T}{\mathbb{T}}

\newcommand{\Ad}{\operatorname{Ad}}

\newcommand{\Tr}{\operatorname{Tr}}
\newcommand{\Ann}{\operatorname{Ann}}
\newcommand{\HS}{\mathrm{HS}}

\newcommand{\End}{\operatorname{End}}

\newcommand{\CRO}{\operatorname{CR0}}
\newcommand{\CRI}{\operatorname{CR1}}

\newcommand{\psum}{\sideset{}{'}\sum}

\newcommand{\Kon}[1]{}

\newcommand{\Max}[1]{}

\newcommand{\DC}{{\bf (DC)}\xspace}

\newtheoremstyle{Lehn-Bemerkung}
  {}
  {}
  {}
  {}
  {\itshape}
  {$\;$\textmd{---}}
  { }
  {}

\newtheorem{dfn}[equation]{Definition}
{\theoremstyle{Lehn-Bemerkung}
\newtheorem{rmk}[equation]{Remark}
}
{\theoremstyle{Lehn-Bemerkung}
\newtheorem{exa}[equation]{Example}
}
\newtheorem{lem}[equation]{Lemma}
\newtheorem{prp}[equation]{Proposition}
\newtheorem{thm}[equation]{Theorem}
\newtheorem{cor}[equation]{Corollary}

\usetikzlibrary{matrix}

\DeclareUnicodeCharacter{2202}{\ensuremath{\partial{}}}

\DeclareUnicodeCharacter{0305}{\ensuremath{\overline{\phantom{L}}}\!\!\!\!}

\begin{document}

\title{Cohomology of CR structures on compact Lie groups}

\author[H. Jacobowitz]{Howard Jacobowitz}
\address{Rutgers University}
\email{jacobowi@camden.rutgers.edu}

\author[M. R. Jahnke]{Max Reinhold Jahnke }
\thanks{The second and fourth authors were funded by the DFG (grant JA 3453/2-1 and RO 3734/4-1, respectively) during this work.}
\address{Universität zu Köln}
\email{max.jahnke@uni-koeln.de}

\author[V. Novelli]{Vinícius Novelli}
\thanks{The third author was funded by Sao Paulo Research Foundation (grant 2023/17607-7) and FWF (grant 10.55776/PAT3705925) during this work.}
\address{University of Vienna}
\email{vinicius.novelli.da.silva@univie.ac.at}

\author[K. Wehler]{Konstantin Wehler}
\address{Philipps-Universität Marburg}
\email{wehlerk@staff.uni-marburg.de}

\begin{abstract}
    We show that, under a division condition, the tangential Cauchy--Riemann cohomology of a compact Lie group with a left-invariant CR structure can be computed on a suitable maximal torus. As a consequence, we conclude that the tangential Cauchy--Riemann cohomology is finite-dimensional. We also show that, for a class of CR structures, this division condition is necessary for the total cohomology to be finite-dimensional. The proof combines Fourier analysis on compact Lie groups, highest-weight representations and Lie algebra cohomology. This not only generalizes but provides completely new proofs for the analogous result due to Pittie and for its extensions to Levi-flat CR structures, obtained by Jacobowitz and Jahnke.
\end{abstract}

\subjclass[2010]{32V20; 58J10; 22E30}
\keywords{CR structures, tangential Cauchy--Riemann cohomology, compact Lie groups}
\maketitle
\setcounter{tocdepth}{1}

\section{Introduction}
\label{sec:introduction}

To a given compact, smooth CR manifold, one can associate a complex of first-order differential operators, known as the \textit{tangential Cauchy--Riemann} complex, which reduces to the usual Dolbeault complex when the manifold is complex. It possesses many interesting analytic properties, both locally and globally \cite{lewy_example_1957}. Since it is not an elliptic complex in general, the corresponding cohomology spaces can fail to have finite dimension. This property is implied by certain sign conditions on the eigenvalues of the Levi form \cite{folland_kohn}, but it is, in general, a wide open question to determine the properties which characterize finite-dimensionality of these cohomology spaces. In this work, we consider left-invariant CR structures on compact Lie groups and provide fairly complete conditions that characterize finite-dimensionality.

Let $G$ be a connected compact Lie group with Lie algebra $\mathfrak g$, and let $\mathfrak g_\C$ be its complexification. A left-invariant CR structure on $G$ is a left-invariant involutive subbundle $\mathcal{V}\subset T_\C G$ satisfying $\mathcal{V} \cap \overline{\mathcal {V}} = \{0\}$. Therefore, the CR structure $\mathcal{V}$ is uniquely determined by a subalgebra $\mathfrak v \subset \mathfrak g_\C$.

Under the hypothesis $\mathfrak v \oplus \overline{\mathfrak v} = \mathfrak g_\C$, the subalgebra $\mathfrak v$ defines a left-invariant complex structure on $G$, which is always known to exist if $G$ is even-dimensional \cite{wang_closed_1954}. In \cite{pittie_dolbeault-cohomology_1988}, Pittie studied the Dolbeault complex of compact Lie groups with a left-invariant complex structure and proved that the cohomology is computed by the restriction of this complex to a suitable maximal torus in $G$.

If $G$ is odd-dimensional, then it follows from the Charbonnel--Khalgui classification theorem \cite{charbonnel_classification_2004} that there always exists a subalgebra $\mathfrak v \subset \mathfrak g_\C$ defining a left-invariant CR structure of hypersurface type, that is, $\mathfrak v \oplus \overline{\mathfrak v}$ has codimension one in $\mathfrak g_\C$. Moreover, there is a maximal torus $\T \subset G$, whose Lie algebra we denote by $\mathfrak t$, and a Lie algebra $\mathfrak m \subset \mathfrak t_\C \cap \mathfrak v$, known as the toral part of $\mathfrak v$, defining a bi-invariant CR structure on $\T$.

The tangential Cauchy--Riemann cohomology $H^{q}(G, \mathfrak v)$ of a left-invariant CR structure $\mathfrak v \subset \mathfrak g_\C$ on $G$ is defined by the tangential Cauchy--Riemann complex $\left( \mathscr C^\infty(G, \underline \Lambda^\bullet), \dbarb \right)$. For the precise construction of this complex, see Section \ref{sec:involutive_structures} or \cite{boggess_cr_1991}. 
 
 It was proved in \cite{jacobowitz_levi-flat_2023} that, under a certain division condition on $\mathfrak m$ denoted by \DC (Section \ref{sec:division_condition}), 
    the tangential Cauchy--Riemann complex of the maximal torus $\mathbb T$ with the induced CR structure $\mathfrak m$ forms a subcomplex of the above complex inducing an inclusion
    \begin{equation}\label{eq: inclusion}
     H^{q}(\mathbb T, \mathfrak{m}) \hookrightarrow H^{q}(G, \mathfrak{v}),
    \end{equation}
    and the cohomology $H^{q}(\mathbb T, \mathfrak{m})$ is finite-dimensional. In fact, the cohomology space $H^{q}(\mathbb T, \mathfrak{m})$ is finite-dimensional if and only if $\mathfrak m$ satisfies the division condition \DC. 

 In the case where $G$ is non-abelian, the only CR structures where the map \eqref{eq: inclusion} is known to be an isomorphism are Levi-flat CR structures \cite{jacobowitz_levi-flat_2023}, which are precisely the CR structures determining a foliation on $G$ by complex manifolds.
Our main result shows that the condition \DC yields the desired isomorphism for CR structures on all compact Lie groups.

 \begin{thm} \label{thm:main}
        Let $G$ be a connected, compact Lie group with a left-invariant CR structure $\mathfrak{v}$ of hypersurface type. If the toral part $\mathfrak{m}$ of $\mathfrak{v}$ satisfies \DC, then the inclusion
        \[
             H^{q}(\mathbb T, \mathfrak{m}) \hookrightarrow \ H^{q}(G, \mathfrak{v})
        \]
        is an isomorphism. In particular, $H^{q}(G, \mathfrak{v})$ is finite-dimensional and $H^{q}(G, \mathfrak{v})$ vanishes for $q > \dim \mathfrak m$. 
    \end{thm}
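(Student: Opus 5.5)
We work through Fourier analysis on $G$. By the Peter--Weyl theorem, $\mathscr C^\infty(G,\underline\Lambda^q)$ decomposes as a Fréchet-completed direct sum, over irreducible representations $\pi_\lambda$ with highest weight $\lambda$, of isotypic components $V_\lambda^*\otimes V_\lambda\otimes \Lambda^q\mathfrak v^*$. Since $\mathfrak v$ is left-invariant, the operator $\dbarb$ acts on the right-regular factor only. It therefore preserves each isotypic component, and on the component of $\lambda$ it coincides with $\mathrm{id}_{V_\lambda^*}$ tensored with the Chevalley--Eilenberg differential of the Lie algebra $\mathfrak v$ with coefficients in $V_\lambda$. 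Consequently $H^q(G,\mathfrak v)$ is computed from the Lie algebra cohomologies $H^q(\mathfrak v, V_\lambda)$. Two ingredients are needed: (i) identify $H^q(\mathfrak v,V_\lambda)$ for each $\lambda$; (ii) establish uniform estimates in $\lambda$, so that formal solutions assemble into smooth ones.

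\textbf{Step 1: algebraic computation.} We use the Charbonnel--Khalgui normal form of $\mathfrak v$. It has the shape $\mathfrak v=\mathfrak m\oplus\bigoplus_{\alpha\in Q}\mathfrak g_\alpha$, possibly twisted by a linear map on part of the torus, where $Q$ is a set of roots closed enough to make $\mathfrak v$ a subalgebra. The nilpotent part $\mathfrak n=\bigoplus_{\alpha\in Q}\mathfrak g_\alpha$ is an ideal in $\mathfrak v$. The Hochschild--Serre spectral sequence for $\mathfrak n\subset\mathfrak v$ gives
\[E_2^{p,q}=H^p(\mathfrak m, H^q(\mathfrak n,V_\lambda)).\]
Kostant's theorem describes $H^q(\mathfrak n,V_\lambda)$ as a $\mathfrak t_\C$-module with weights $w\cdot\lambda=w(\lambda+\rho)-\rho$. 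When $\mathfrak n$ is not the full nilradical of a Borel subalgebra, we replace Kostant's theorem by a weight argument: the relevant weights are restricted to $\mathfrak m$. A $\mathfrak t_\C$-weight $\mu$ contributes to $H^p(\mathfrak m,\cdot)$ only if $\mu|_{\mathfrak m}=0$. Otherwise some element of $\mathfrak m$ acts invertibly, with inverse of norm roughly $|\mu(X)|^{-1}$, and a Koszul-type homotopy kills the cohomology. The conclusion we aim for is that the only surviving contributions occur in the trivial representation's analogue: precisely the Fourier modes corresponding to characters of $\mathbb T$ that annihilate $\mathfrak m$. These are exactly the classes already present in $H^q(\mathbb T,\mathfrak m)$. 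This gives isomorphism at the formal level, and it also gives vanishing for $q>\dim\mathfrak m$ once we check that the surviving $\mathfrak n$-cohomology sits in degree $0$.

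\textbf{Step 2: analytic control.} For a smooth $\dbarb$-closed form $f$ whose torus part has been removed, we solve $\dbarb u_\lambda=f_\lambda$ in each component, using the explicit homotopy from Step 1. We then need $\|u_\lambda\|\le C(1+|\lambda|)^N\|f_\lambda\|$. The homotopy involves inverses of $\mu(X)$ for weights $\mu$ of $V_\lambda$ with $\mu|_{\mathfrak m}\neq0$, together with contracting homotopies for $\mathfrak n$, which are polynomially bounded since $\mathfrak n$ acts nilpotently through root vectors with norms $O(|\lambda|)$. Here \DC enters: it should assert exactly that $|\mu|_{\mathfrak m}|\ge c(1+|\mu|)^{-N}$ for lattice weights with $\mu|_{\mathfrak m}\ne0$. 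This makes the inverses polynomially bounded. Since $f$ has rapidly decaying Fourier coefficients, $u=\sum u_\lambda$ is then smooth. Together with the injectivity \eqref{eq: inclusion} from \cite{jacobowitz_levi-flat_2023}, this yields surjectivity, hence the isomorphism. Finite dimensionality then follows from the torus result.

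\textbf{Main obstacle.} I expect the hardest part to be Step 1, specifically making the spectral-sequence computation explicit enough to extract a homotopy with uniform bounds. The difficulty is handling weights $\mu$ for which $\mu|_{\mathfrak m}=0$ but $\mu$ is not trivial, and showing that they contribute nothing beyond the torus classes. My first approach would be to restrict to $\mathfrak t_\C$-weight spaces. I would then argue that the $\mathfrak n$-cohomology of $V_\lambda$ in weight $\mu$ with $\mu|_{\mathfrak m}=0$ is nonzero only when $\lambda=0$, using the highest-weight structure and the hypersurface-type condition $\dim(\mathfrak v+\overline{\mathfrak v})=\dim\mathfrak g_\C-1$.
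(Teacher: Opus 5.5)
Your framework is sound: Peter--Weyl reduction to $V_\lambda^*\otimes H^q(\mathfrak v,V_\lambda)$, Hochschild--Serre for $\mathfrak v=\mathfrak m\oplus\mathfrak n$, then estimates uniform in $\lambda$. Kostant's theorem would also be a cleaner substitute for the paper's semilocal Proposition~\ref{prp:an_inclusion} combined with the vanishing of $H^{0,q}_{\dbar}(G/\T)$. But Step~2 has a genuine gap.

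Split $C^\bullet(\mathfrak v,V_\lambda)=\Wedge^\bullet\mathfrak m^*\otimes C^\bullet(\mathfrak n,V_\lambda)$ into $\mathfrak t_\C$-weight spaces $\nu$. For $\nu\neq0$ your argument works. The Koszul homotopy of $\mathfrak m$ has norm $\lesssim(\max_j|\nu(L_j)|)^{-1}$, which is polynomially bounded by \DC. Perturbing it by $d_{\mathfrak n}$, of norm $O(1+|\lambda|)$, gives a finite series, since each step lowers the $\mathfrak m$-degree. This is where \DC genuinely enters the estimates, a point the paper's passage to $\Omega$ leaves implicit.

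For $\nu=0$, however, $\mathfrak m$ acts trivially, and you need a contracting homotopy for $d_{\mathfrak n}$ on $C^\bullet(\mathfrak n,V_\lambda)_0$ with controlled norm. Nilpotency and $\|d\pi_\lambda(X_\alpha)\|=O(|\lambda|)$ only bound $d_{\mathfrak n}$ from above. The norm of a homotopy is governed by the reciprocal of the smallest nonzero singular value of $d_{\mathfrak n}$, and those facts say nothing about it. This is precisely what the paper extracts from ellipticity on $\Omega=G/\T$. There, $V_\lambda^*\otimes C^\bullet(\mathfrak n,V_\lambda)_0$ is the $\lambda$-isotypic part of the Dolbeault complex of $\Omega$. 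The $L^2$ canonical solution operator on $\Omega$ is bounded (closed range, H\"ormander) and commutes with the left $G$-action, so its bound is uniform in $\lambda$. Alternatively, Kostant's Laplacian acts on this weight-zero part by a positive multiple of $|\lambda+\rho|^2-|\rho|^2\geq|\lambda|^2$. You need one of these; as written, Step~2 does not close.

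Step~1 also needs repair. You misread \DC: it gives $\max_j|\widehat{L_j}(\xi)|>0$ for \emph{every} $\xi\in\Z^d\setminus\{0\}$. So no nonzero integral weight vanishes on $\mathfrak m$, and the weights in your ``main obstacle'' do not exist. Kostant's weights $w(\lambda+\rho)-\rho=w\lambda+(w\rho-\rho)$ are integral, so $H^k(\mathfrak n,V_\lambda)^{\mathfrak m}\neq0$ forces $w(\lambda+\rho)=\rho$. Regularity of $\lambda+\rho$ then gives $w=e$ and $\lambda=0$. Hence $H^q(\mathfrak v,V_\lambda)=0$ for $\lambda\neq0$ and $H^q(\mathfrak v,\C)\cong\Wedge^q\mathfrak m^*$, with no use of the hypersurface condition. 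This replaces Propositions~\ref{prp:an_inclusion} and~\ref{prp:really_important_prp} at once.

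Finally, your normal form is wrong for $\CRI$. In $\Theta(\alpha,\mathfrak m,x,t)$ the generator $t+x$ mixes a torus element with a root vector. So the complement of the nilpotent ideal is not toral, and neither Kostant nor a weight argument applies. You must first invoke Theorem~\ref{thm:nodc_nocr1} (\DC excludes $\CRI$). Then $\mathfrak n=\mathfrak b_{\mathfrak t}$ is the full nilradical, and your ``not the full nilradical'' case disappears.
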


    If $G$ is even-dimensional, then the toral part of any left-invariant complex structure, which is the analogue of a CR structure of hypersurface type in this case, satisfies \DC and the proof of Theorem \ref{thm:main} can be easily adapted to recover Pittie’s result \cite{pittie_dolbeault-cohomology_1988}. Furthermore, the operator $\dbarb$ also maps left-invariant sections of $\underline \Lambda^{q}$ to left-invariant sections of $\underline \Lambda^{q+1}$.
    Thus, we have a subcomplex
    \begin{equation}
        \label{eq:h_complex_left_inv}
            \left( \Wedge^\bullet \mathfrak v^*, \dbarb\right) \hookrightarrow \left( \mathscr C^\infty(G, \underline  \Lambda^{\bullet}), \dbarb\right),
    \end{equation}
    where we consider $\Wedge^q \mathfrak v^*$ as the subspace of left-invariant sections of $\underline \Lambda^q$. This subcomplex was shown in \cite{jahnke_elliptic_2023} to induce an inclusion
    \begin{equation}
        \label{eq:the_homomorphism}
         H^{q} ( \mathfrak v, \C) \to H^{q} (G, \mathfrak v).
    \end{equation}
    
    The following consequence of Theorem~\ref{thm:main} answers a question proposed in \cite{jahnke_top-degree_2019, jahnke_elliptic_2023} and is part of a larger research project that includes \cite{jahnke_closed_2026, araujo_real_2026} and references therein.

    \begin{cor}
    \label{cor:left-invariant cohomology}
        Let $G$ be a connected, compact Lie group with a CR structure $\mathfrak{v}$ of hypersurface type. If the toral part $\mathfrak m$ satisfies \DC, then $H^q(G, \mathfrak v)$ can be computed by left-invariant forms, that is, the map \eqref{eq:the_homomorphism} is an isomorphism. In particular, we have $
            H^q(G, \mathfrak v) \cong \Wedge^q \mathfrak m^*.$
    \end{cor}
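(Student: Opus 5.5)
The plan is to get the corollary from Theorem~\ref{thm:main} by comparing two subcomplexes of $\left( \mathscr C^\infty(G, \underline \Lambda^\bullet), \dbarb\right)$ that sit inside each other: the left-invariant complex $\left(\Wedge^\bullet \mathfrak v^*, \dbarb\right)$ from \eqref{eq:h_complex_left_inv}, and the complex of left-invariant forms on the torus, $\left(\Wedge^\bullet \mathfrak m^*, \dbarb\right)$.

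\textbf{Step 1: the torus cohomology.} Since $\T$ is abelian and $\mathfrak m \subset \mathfrak t_\C$ is abelian, $\dbarb$ on $\Wedge^\bullet \mathfrak m^*$ is the Chevalley--Eilenberg differential of an abelian Lie algebra with trivial coefficients, so it vanishes. On $\T$, expand forms in Fourier series in characters $e^{i\langle \xi,\cdot\rangle}$. On a character component, $\dbarb$ acts as wedge with the linear form $\xi|_{\mathfrak m}$, which gives a Koszul complex. This complex is exact whenever $\xi|_{\mathfrak m}\neq 0$. Under \DC the homotopy operators have at most polynomial growth in $\xi$, so the sum over all $\xi\neq 0$ converges to a smooth contracting homotopy. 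This is the finite-dimensionality result for $H^q(\T,\mathfrak m)$ recalled from \cite{jacobowitz_levi-flat_2023}. The outcome is
\[
H^q(\T,\mathfrak m)\cong \Wedge^q\mathfrak m^*,
\]
represented by constant, i.e.\ left-invariant, forms.

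\textbf{Step 2: realise the torus forms as left-invariant forms on $G$.} The inclusion of the torus complex into the $G$-complex from \cite{jacobowitz_levi-flat_2023} uses a decomposition $\mathfrak v=\mathfrak m\oplus\mathfrak n$ adapted to the root decomposition. Under this decomposition it sends $\Wedge^\bullet\mathfrak m^*$ to left-invariant forms on $G$ that vanish on $\mathfrak n$. Since the inclusion is a chain map, these images are $\dbarb$-closed elements of $\Wedge^\bullet\mathfrak v^*$. This gives a commutative triangle
\[
\Wedge^q\mathfrak m^* = H^q(\T,\mathfrak m)\to H^q(\mathfrak v,\C)\to H^q(G,\mathfrak v),
\]
whose composite is the isomorphism of Theorem~\ref{thm:main}. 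Hence \eqref{eq:the_homomorphism} is surjective.

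\textbf{Step 3: injectivity.} The map \eqref{eq:the_homomorphism} is injective by \cite{jahnke_elliptic_2023}. Combined with Step~2, it is therefore an isomorphism, and $H^q(G,\mathfrak v)\cong\Wedge^q\mathfrak m^*$.

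\textbf{Main obstacle.} The delicate point is Step~2: checking that the torus complex embeds into $G$ compatibly with the left-invariant subcomplex, so that constant forms on $\T$ correspond to left-invariant forms on $G$ and the triangle commutes. I would first look at the explicit construction in \cite{jacobowitz_levi-flat_2023}, which extends forms from $\T$ by duality with respect to $\mathfrak v=\mathfrak m\oplus\mathfrak n$, and verify that $[\mathfrak m,\mathfrak n]\subset\mathfrak n$ makes the extended forms closed. If that construction is not literally left-invariant, I would average over the right $\T$-action to replace it by a cohomologous left-invariant representative.
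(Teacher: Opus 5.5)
Your argument is correct, but it runs in the opposite logical direction from the paper's. In the paper, the corollary is really the content of the proof of Theorem~\ref{thm:main}. After the Fourier decomposition, the non-trivial components $H^q(\mathfrak v,\End(H_\xi))$ vanish. The $L^2$ estimates on $G/\T$ then show that the averaging map $\Phi\colon H^q(G,\mathfrak v)\to H^q(\mathfrak v,\C)$, which takes the trivial Fourier coefficient, is bijective. Hochschild--Serre, together with $H^k(\mathfrak b,\C)^{\mathfrak m}=0$ for $k>0$, identifies $H^q(\mathfrak v,\C)$ with $H^q(\mathfrak m,\C)=\Wedge^q\mathfrak m^*=H^q(\T,\mathfrak m)$. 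What remains is the ``dimensionality'' remark that injections between spaces of equal finite dimension are isomorphisms.

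You instead take Theorem~\ref{thm:main} as a black box. You observe that the torus inclusion, as recalled in Section~\ref{sec:involutive_structures} (left-invariant representative, extended by zero, then left-translated), factors through $(\Wedge^\bullet\mathfrak v^*,\dbarb)$. Surjectivity of \eqref{eq:the_homomorphism} is then formal, and injectivity is quoted. This deduction needs no dimension count, and it yields $H^q(\mathfrak v,\C)\cong\Wedge^q\mathfrak m^*$ as a by-product rather than as an input. The paper's route, in exchange, exhibits the inverse explicitly as Haar averaging.

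Two small points. First, closedness of the extended forms uses that $\mathfrak b$ is an ideal in $\mathfrak v$; this holds because \DC rules out $\CRI$ (Theorem~\ref{thm:nodc_nocr1}), so you should cite that. Second, your fallback of averaging over the right $\T$-action would only give right-$\T$-invariant forms, not left-invariant ones. It is not needed, though, because the construction is literally left-invariant. Injectivity of \eqref{eq:the_homomorphism} can also be checked directly: averaging a primitive over left translations commutes with $\dbarb$ and produces a left-invariant primitive.
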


    As we will see in Example \ref{exa:SU(2)}, Corollary \ref{cor:left-invariant cohomology} no longer holds if one removes the \DC condition. More precisely, the Charbonnel--Khalgui classification splits the possible left-invariant CR structures of hypersurface type on $G$ into two classes, which are referred to as CR0 and CR1 (Section \ref{sec:CR structures}), and the \DC condition precludes the CR1 structures (Theorem \ref{thm:nodc_nocr1}). We show that, for structures of type CR0, \DC is also necessary for finite-dimensionality of the total cohomology space.
    
    \begin{thm}
    \label{thm:necessity}
        Let $G$ be a connected, compact Lie group with a left-invariant $\CRO$ structure $\mathfrak v$ of hypersurface type. Then, the total cohomology space
        \[
        H^\bullet(G,\mathfrak v) = \bigoplus_{q\geq 0}H^q(G,\mathfrak v)
        \]
        is finite-dimensional if and only if the toral part $\mathfrak m$ of $\mathfrak v$ satisfies \DC.
    \end{thm}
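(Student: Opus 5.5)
One direction is immediate from Theorem~\ref{thm:main}. If $\mathfrak m$ satisfies \DC, then $H^q(G,\mathfrak v)\cong H^q(\mathbb T,\mathfrak m)$ for every $q$. Each of these spaces is finite-dimensional, and they vanish for $q>\dim\mathfrak m$. Hence the total cohomology $H^\bullet(G,\mathfrak v)$ is finite-dimensional.

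For the converse, assume $\mathfrak v$ is of type $\CRO$ and $\mathfrak m$ fails \DC. We want infinitely many independent classes in some degree. The natural source is the torus: by the result of \cite{jacobowitz_levi-flat_2023} recalled above, $H^{q}(\mathbb T,\mathfrak m)$ is infinite-dimensional for some $q$ exactly when \DC fails. Concretely, the failure of \DC gives a sequence of characters $\lambda_k$ of $\mathbb T$. Along this sequence the symbol of $\dbarb$ restricted to $\mathfrak m$ degenerates, with vanishing or rapidly decaying denominators, and the resulting Fourier modes produce infinitely many independent classes on $\mathbb T$. The inclusion \eqref{eq: inclusion} is only established under \DC, so we cannot simply push these classes forward.

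The plan is instead to work on $G$ with Fourier analysis. First, use the Peter--Weyl decomposition to split the complex $\mathscr C^\infty(G,\underline\Lambda^\bullet)$ into the finite-dimensional complexes $V_\pi\otimes(V_\pi^*\otimes\Wedge^\bullet\mathfrak v^*)$. Here $\dbarb$ acts through the representation $\pi$ restricted to $\mathfrak v$, so
\[
H^q(G,\mathfrak v)\supseteq \bigoplus_\pi V_\pi\otimes H^q(\mathfrak v, V_\pi^*)
\]
at the level of finite sums, and each nonzero $H^q(\mathfrak v,V_\pi^*)$ contributes at least one smooth class. It therefore suffices to produce infinitely many irreducible $\pi$ with $H^q(\mathfrak v,V_\pi^*)\neq 0$ for a single fixed $q$. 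We also need a check that the classes stay nontrivial when all modes are combined. This holds because $\dbarb$ preserves isotypic components and the projection onto a component is continuous.

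For the $\CRO$ structure, the Charbonnel--Khalgui description should give $\mathfrak v=\mathfrak m\oplus\mathfrak n$, where $\mathfrak n$ is a sum of positive root spaces for a suitable ordering. Take $\pi$ to be the irreducible representation of highest weight $\lambda_k$, shifted into the dominant chamber if necessary. Then use a Hochschild--Serre spectral sequence for the ideal $\mathfrak n\subset\mathfrak v$ together with Kostant's theorem on $\mathfrak n$-cohomology. In degree $0$ this gives $H^0(\mathfrak n,V_\pi^*)=\C_{-\lambda}$, the lowest weight line, and then $H^q(\mathfrak v,V_\pi^*)\supseteq H^q(\mathfrak m,\C_{-\lambda})$ at the edge. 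The latter is nonzero precisely when $\lambda|_{\mathfrak m}=0$ or is degenerate in the sense measured by \DC.

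The main obstacle is the step matching the failure of \DC, which is an arithmetic or Diophantine condition on $\mathfrak m$ and the weight lattice, with the existence of infinitely many dominant weights $\lambda$ for which $\mathfrak m$ acts trivially on the corresponding line. A subtler point arises if \DC fails only through small divisors rather than exact vanishing. In that case each finite-dimensional cohomology may vanish, and infinite-dimensionality has to come from non-closed range. One would then construct a smooth $\dbarb$-closed form $\sum_k c_k\,\phi_{\lambda_k}$ whose formal primitive has coefficients growing too fast to be smooth, choosing the $c_k$ with rapid decay. This produces an infinite-dimensional, indeed non-Hausdorff, cohomology. I would first try the exact-vanishing case via Kostant, then handle the Liouville-type case by this small-divisor construction, using highest-weight vectors to control norms.
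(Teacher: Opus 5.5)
Your direction \DC $\Rightarrow$ finite-dimensionality is the paper's argument. For the converse, your plan is in substance the paper's proof of Theorem~\ref{thm:cr0_infinite}: highest-weight vectors (i.e.\ degree $0$ of Kostant), with the dichotomy ``exact vanishing gives infinitely many classes, small divisors give a closed non-exact series $\sum_n \dbarb f_n$''. However, the step you flag and leave open, ``shifted into the dominant chamber if necessary'', is a genuine gap. Functions annihilated by $\mathfrak b_{\mathfrak t}$ with right $\T$-weight $\nu$ exist only for $\nu$ dominant with respect to the positive system that defines $\mathfrak v$, and you need such $\nu$ nearly annihilated by $\mathfrak m$. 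Failure of \DC only provides arbitrary integral weights $\mu_n$. For a representative $g\in N_G(\T)$ of $w$ one has $(w\mu)(L)=\mu(\Ad(g)^{-1}L)$, so $w\mu_n$ is in general not small on $\mathfrak m$.

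The paper's Lemma~\ref{lem:dominant_bad_weights} passes to $\Ad(g)\mathfrak v$. But $\Ad(g)$ also moves $\mathfrak b_{\mathfrak t}$ to $\bigoplus_{\alpha\in\Delta_+}\mathfrak g_{w\alpha}$. The structure actually used afterwards, $\Ad(g)\mathfrak m\oplus\mathfrak b_{\mathfrak t}$, is therefore not $\Ad(g)\mathfrak v$ when $w\neq 1$, so that device does not close the gap either.

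The obstruction is real. On $G=\SU(3)\times S^1$, with simple roots $\alpha_1,\alpha_2$ and fundamental weights $\omega_1,\omega_2$, let $Z$ be central and $0\neq H_0\in\mathfrak t\cap\mathfrak{su}(3)$ with $\alpha_1(H_0)=0$, and put $\mathfrak m=\C(Z+iH_0)$. Every $k\alpha_1$ annihilates $\mathfrak m$, so \DC fails. But for a dominant $\lambda\neq 0$, either its central part is nonzero or its semisimple part $a\omega_1+b\omega_2$ gives $\lambda(H_0)=(a+2b)\,\omega_1(H_0)\neq 0$. Hence $|\lambda(Z+iH_0)|$ is bounded below, and every degree-$0$ argument produces nothing. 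In fact $H^0(G,\mathfrak v)=\C$ here, while $\Ad(g)\mathfrak m\oplus\mathfrak b_{\mathfrak t}$ with $w=s_2$ has infinitely many CR functions.

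What is missing is Kostant's theorem in all degrees,
$H^q(\mathfrak n,V_\lambda)\cong\bigoplus_{\ell(w)=q}\C_{w(\lambda+\rho)-\rho}$,
whose weights exhaust all integral $\nu$ with $\nu+\rho$ regular. The steps are as follows.

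First, replace each bad $\mu_n$ by some $k\mu_n$ with $1\le k\le|\Delta_+|+1$ such that $k\mu_n+\rho$ is regular; it is still bad. Take $w$ with $w^{-1}(k\mu_n+\rho)$ strictly dominant, and pass to a subsequence with $k$ and $w$ fixed. Then $\lambda_n=w^{-1}(k\mu_n+\rho)-\rho$ are distinct dominant integral weights, and $H^{\ell(w)}(\mathfrak n,V_{\lambda_n})$ contains a line of weight $k\mu_n$.

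If $k\mu_n|_{\mathfrak m}=0$ infinitely often, Theorem~\ref{thm:hsc_thm_13} gives $H^{\ell(w)}(\mathfrak v,V_{\lambda_n})\neq 0$ for infinitely many $n$, hence $\dim H^{\ell(w)}(G,\mathfrak v)=\infty$.

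Otherwise, run your series argument with $f_n$ replaced by $1\otimes c_n\in\Wedge^0\mathfrak m^*\otimes C^{\ell(w)}(\mathfrak n,V_{\lambda_n})$, where $c_n$ represents the Kostant class and is orthogonal to $\mathfrak n$-coboundaries. Then $\dext(1\otimes c_n)=\sum_j k\mu_n(L_j)\,\tau^j\otimes c_n$ is tiny. On the other hand, let $F$ satisfy $\dext F=\dext(1\otimes c_n)$. Comparing $\Wedge^1\mathfrak m^*$-components shows that the $\Wedge^0\mathfrak m^*$-component of $F$ differs from $c_n$ by an $\mathfrak n$-coboundary, so $\|F\|\ge\|c_n\|$. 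This is the lower bound the series argument needs.

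In the example, $\lambda_k=k\alpha_1+(k-1)\alpha_2$ for $k\ge 2$ satisfy $s_2(\lambda_k+\rho)-\rho=k\alpha_1$. This gives infinitely many independent classes in $H^1(G,\mathfrak v)$.
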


    It is natural to ask whether the cohomology of a CR1 structure is also necessarily infinite-dimensional. The methods of this paper rely on the CR0 decomposition and do not apply to the CR1 case, which requires substantially different techniques and will not be addressed here.

    The paper is organized as follows: In Section \ref{sec:CR structures}, we recall the notion of involutive structures and the construction of the associated differential complex and cohomology spaces. We also discuss the special case of left-invariant involutive structures on compact Lie groups and state the classification theorem by Charbonnel--Khalgui \cite{charbonnel_classification_2004}. We also provide an overview of the proof of Theorem \ref{thm:main} to motivate the technical results of the subsequent sections.
    Section \ref{sec:cohomology_of_lie_algebras} reviews Lie algebra cohomology and two notions of cohomologies induced by subalgebras, which will be the main tools in the proof of Theorem \ref{thm:main}. In Section \ref{sec:fourier_transform}, we introduce the Fourier transform framework used in several proofs, in particular, the connection between the Fourier transform on the Lie group and functions on the quotient by a maximal torus. Section \ref{sec:semilocal} examines the structure of the CR vector fields in a full neighborhood of an invariant torus.
    In Section \ref{sec:the_proof}, we apply a result of Hochschild–Serre on Lie algebra cohomology, together with the results from the previous sections, to prove Theorem \ref{thm:main}. Finally, in Section \ref{sec:necessity_proof}, we establish Theorem \ref{thm:necessity}.

\section{CR structures}\label{sec:CR structures}

    First, we establish some notation and conventions. Throughout the text, $G$ always denotes a connected, compact Lie group of dimension $N$. We denote by $\mathfrak g$ the Lie algebra of $G$. Let $L_g \colon G \to G$ be the left-multiplication by $g \in G$, that is, $L_g(h) = g \cdot h$. Notice that the push-forward $(L_g)_*$ induces a vector bundle isomorphism between $G \times  \mathfrak g_\C$ and $T_\C G$. Therefore, we can identify subbundles of $ T_\C G$ with subbundles of the trivial bundle $G  \times  \mathfrak g_\C$.
    
    A subbundle $\mathcal V \subset  T_\C G$ is said to be left-invariant if $(L_g)_* X \in \mathcal V_{g \cdot h}$ for all $X \in \mathcal V_h.$ We identify left-invariant involutive subbundles of $T_\C G$ with complex subalgebras of $ \mathfrak g_\C$. For a complex subalgebra $\mathfrak v \subset  \mathfrak g_\C$, we denote by $\mathcal V$ the associated subbundle. Notice that the rank of the bundle $\mathcal V$ is the complex dimension of $\mathfrak v$.

    A complex subalgebra $\mathfrak v \subset \mathfrak g_\C$ is a \textit{CR subalgebra} if $\mathfrak v \cap \overline{\mathfrak v} =\{0\}$. In this case, the associated subbundle $\mathcal V$ defines a left-invariant CR structure on the group $G$. We say that $\mathfrak v$ is of \textit{hypersurface type} if $\dim_\C \mathfrak v = [N/2]$\footnote{For $x \in \R$, we define $[x] = \max \{ n \in \Z \, | \, n \leq x \}$.}. This decomposes the problem into two natural cases, depending on the parity of $N$: If $N=2k+1$ is odd, then $\dim_\C \mathfrak v = k$ and we get, locally, the structure of a real hypersurface in $\C^{k+1}$. If $N=2k$ is even, then $\dim_\C \mathfrak v = k$ and $\mathfrak v\oplus \overline{\mathfrak v}=\mathfrak g_\C$, so $\mathfrak v$ defines a complex structure on $G$ (see \cite[Section I.8]{berhanu_introduction_2008}). Since we are mainly interested in CR structures, we will assume that $G$ is odd-dimensional throughout, but similar arguments apply to recover the even-dimensional (complex) case.
    
    \subsection{Left-invariant CR structures of hypersurface type on compact Lie groups}\label{subsect: CR structures on hypersurface type}
        In this section, we give a brief exposition of some results from \cite{charbonnel_classification_2004}. Let $\mathfrak t \subset \mathfrak g$ be a maximal abelian subalgebra (in particular, a Cartan subalgebra \cite[Lemma 12.2.1]{hilgert_structure_2012}). We denote by $\Delta$ the set of roots of $ \mathfrak t_\C$ in $ \mathfrak g_\C$ and by $\Delta_+$ a maximal subset of positive roots of $\Delta$. For $\alpha \in \Delta$, we denote by $\mathfrak g_\alpha$ the eigenspace associated to $\alpha$. Each $\mathfrak{g}_\alpha$ is one-dimensional \cite[Theorem 7.23]{hall_lie_2015} and
        \begin{equation}
        \label{eq:positive_roots_algebra}
            \mathfrak b_{\mathfrak t} = \bigoplus_{\alpha \in \Delta_+} \mathfrak g_\alpha
        \end{equation}
        is a subalgebra of $ \mathfrak g_\C$. Since $\mathfrak{t}$ is abelian, any choice of vector space $\mathfrak{m} \subset \mathfrak{t}_\C$ is a subalgebra, and $\mathfrak{h} = \mathfrak m \oplus \mathfrak{b}_\mathfrak{t}$ is a subalgebra of $\mathfrak g_\C$. Also, notice that $\mathfrak{b}_\mathfrak{t}$ is an ideal of $\mathfrak h$.
        
        Let us assume that $\mathfrak{m}$ is a CR subalgebra, in which case $\mathfrak{h}$ is also a CR subalgebra. We denote by $d$ the dimension of $\mathfrak t$ and, since the dimension of each $\mathfrak g_\alpha$ is one, we can easily see that $N = d + 2l$ with $l$ being the number of elements of $\Delta_+$. If $\mathfrak m$ is a subalgebra of $ \mathfrak t_\C$ of dimension $[d/2]$ such that $ \mathfrak m \cap \mathfrak g = \{0\}$, then the sum $\Phi(\mathfrak m) =\mathfrak m \oplus \mathfrak b_{\mathfrak t}$ is a subalgebra of dimension $[N/2]$ with $ \Phi(\mathfrak m) \cap \mathfrak g = \{0\}$.
        
        We recall that two subalgebras $\mathfrak h_1$ and $\mathfrak h_2$ of $\mathfrak g$ are called conjugate if there exists a $g \in G$ such that $\operatorname{Ad}(g) \mathfrak h_1 = \mathfrak h_2$. Here $\operatorname{Ad}(g)$ is the differential at $e \in G$ of the map $C_g \colon G \to G$ given by $C_g(h) = ghg^{-1}$. 
    
        \begin{dfn}
            A subalgebra $\mathfrak h \subset  \mathfrak g_\C$ is said to be of type $\CRO$ (with respect to $\mathfrak g \subset \mathfrak g_\C$) if it is conjugate to a subalgebra of the form
            \begin{equation}
            \label{eq:CR0}
                \Phi(\mathfrak m) = \mathfrak m \oplus \mathfrak b_{\mathfrak t}, 
            \end{equation}
            where $\mathfrak m$ is a subspace of dimension $[d/2]$ of $\mathfrak t_\C$ with $\mathfrak m \cap \mathfrak g = \{0\}$.
        \end{dfn}
    
        When $N$ is odd, we introduce the set $\mathcal D(\Delta_+)$ of all elements of the form $(\alpha, \mathfrak m, x, t)$ with $\alpha$ a simple root in $\Delta_+$, $\mathfrak m \subset \ker \alpha$ a subspace of dimension $[d/2]$ with $\mathfrak m \cap \mathfrak g = \{0\}$, and such that $x$ is an element of $\mathfrak g_\alpha \backslash \{0\}$ and $t \in \mathfrak t \backslash \{0\}$. For every $(\alpha, \mathfrak m, x, t)$ in  $\mathcal D(\Delta_+)$ we define
        \begin{equation}
        \label{eq:CR1}
            \Theta(\alpha, \mathfrak m, x, t) = \mathfrak m \oplus \bigoplus_{\beta \in \Delta_+ \backslash \{\alpha\}} \mathfrak g_\beta \oplus \operatorname{span}_\C\{t+x\}.
        \end{equation}
        
        Therefore, the space $\Theta(\alpha, \mathfrak m, x, t)$ is a subalgebra of $ \mathfrak g_\C$ of dimension $[N/2]$ with $\Theta(\alpha, \mathfrak m, x, t) \cap \mathfrak g = \{0\}$.
    
        \begin{dfn}
            A subalgebra $\mathfrak h \subset  \mathfrak g_\C$ is said to be of type $\CRI$ if it is conjugate to a subalgebra of the form $\Theta(\alpha, \mathfrak m, x, t)$, where $(\alpha, \mathfrak m, x, t)$ is an element of $\mathcal D(\Delta_+)$.
        \end{dfn}
        
        Without loss of generality, we take $\mathfrak h$ to be of the form \eqref{eq:CR0} or \eqref{eq:CR1} instead of conjugate to this form. For an algebra $\mathfrak{h}$ of type $\CRO$ or $\CRI$, we call the subalgebra $\mathfrak{m}$ the toral part of $\mathfrak{h}$.
    
        The main theorem of \cite{charbonnel_classification_2004} is the following:
        
        \begin{thm}\label{thm:classification_theorem}
            Let $\mathfrak{v}\subset \mathfrak{g}_{\C}$ be a left-invariant CR structure of hypersurface type on an $N$-dimensional compact Lie group $G$. If $N$ is even, then $\mathfrak v$ is a complex structure of type $\CRO$. If $N$ is odd, then $\mathfrak v$ is either of type $\CRO$ or of type $\CRI$.
        \end{thm}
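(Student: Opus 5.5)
The statement to prove is the Charbonnel--Khalgui classification (Theorem~\ref{thm:classification_theorem}). I would argue at the level of Lie algebras, reducing the classification to the structure of complex subalgebras of $\mathfrak g_\C$ relative to a compact real form.

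\emph{First step: a normalizer torus.} Let $\mathfrak v\subset\mathfrak g_\C$ be a CR subalgebra with $\dim_\C\mathfrak v=[N/2]$, and let $\mathfrak n=\{X\in\mathfrak g : [X,\mathfrak v]\subset\mathfrak v\}$ be its real normalizer in $\mathfrak g$. I would show that $\mathfrak n$ contains a maximal abelian subalgebra $\mathfrak t$ of $\mathfrak g$. Since $\mathfrak n$ is the Lie algebra of the compact group $N_G(\mathfrak v)$, once $\mathfrak n$ contains a maximal torus one may conjugate so that $\mathfrak t$ is the fixed Cartan subalgebra. Then $\mathfrak v$ is $\operatorname{ad}(\mathfrak t_\C)$-stable, hence decomposes as
\[
\mathfrak v=(\mathfrak v\cap\mathfrak t_\C)\oplus\bigoplus_{\beta\in S}\mathfrak g_\beta .
\]
Root spaces are one-dimensional, and $\overline{\mathfrak g_\beta}=\mathfrak g_{-\beta}$ because $\mathfrak g$ is compact. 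So the condition $\mathfrak v\cap\overline{\mathfrak v}=0$ forces $S\cap(-S)=\emptyset$ and $\mathfrak m=\mathfrak v\cap\mathfrak t_\C$ to be totally real.

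\emph{Second step: counting.} A dimension count gives $[N/2]=\dim\mathfrak m+|S|$ with $\dim\mathfrak m\le[d/2]$ and $|S|\le l$. Closedness under brackets makes $S$ a closed subset with $S\cap -S=\emptyset$, so $S$ lies in some positive system; after a Weyl conjugation, $S\subseteq\Delta_+$. If $N$ is even, every count is forced and $\mathfrak v=\mathfrak m\oplus\mathfrak b_{\mathfrak t}$, which is $\CRO$. If $N$ is odd, either again $S=\Delta_+$ and $\dim\mathfrak m=[d/2]$, giving $\CRO$, or one root is missing. This is exactly where the first step can fail, and where $\CRI$ arises.

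\emph{Main obstacle: the case where $\mathfrak n$ contains no maximal torus.} I would take a maximal torus $\mathfrak s$ of $\mathfrak n$, extend it to a Cartan subalgebra $\mathfrak t\supset\mathfrak s$, and decompose $\mathfrak v$ into $\mathfrak s$-weight spaces. Maximality of $\dim\mathfrak v$ should force $\mathfrak s$ to have corank one in $\mathfrak t$. Then $\mathfrak v$ contains every root space whose root is nonzero on $\mathfrak s$ (up to choosing signs), plus one generic vector of the weight-zero part $\mathfrak t_\C\oplus\mathfrak g_\alpha\oplus\mathfrak g_{-\alpha}$ for a single root $\alpha$ vanishing on $\mathfrak s$.

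Closure under brackets with $\mathfrak g_\beta\subset\mathfrak v$ should then force $\alpha$ to be simple and $\mathfrak m\subset\ker\alpha$. The extra vector must be of the form $t+x$ with $t\in\mathfrak t$ and $x\in\mathfrak g_\alpha$: a $\mathfrak g_{-\alpha}$ component is removed by the $\SU(2)_\alpha$-conjugation, and the CR condition forces $t$ real. This yields $\Theta(\alpha,\mathfrak m,x,t)$.

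I expect this weight analysis inside the rank-one subalgebra $\mathfrak t_\C\oplus\mathfrak g_{\pm\alpha}$ to be the delicate part. To make it rigorous I would reduce to $\mathfrak{su}(2)\oplus\R^{d-1}$ and classify the one-dimensional CR complements there explicitly.
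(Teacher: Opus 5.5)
The paper does not prove this statement; it is quoted from Charbonnel--Khalgui \cite{charbonnel_classification_2004}. Your outline therefore has to stand on its own, and it does not, because the steps that carry the content are deferred.

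What you do establish is correct, but it is the easy half. If a maximal torus $\mathfrak t$ normalizes $\mathfrak v$, then $\mathfrak v=\mathfrak m\oplus\bigoplus_{\beta\in S}\mathfrak g_\beta$ with $S$ closed, $S\cap(-S)=\emptyset$ and $\mathfrak m\cap\mathfrak t=\{0\}$, and the count forces $\CRO$. Note, however, that $N=d+2l$ gives $[N/2]=[d/2]+l$ in \emph{both} parities. So the count leaves no slack: a $\mathfrak t_\C$-stable $\mathfrak v$ is of type $\CRO$ also when $N$ is odd, and your alternative ``or one root is missing'' never occurs. Parity enters only through whether Step~1 holds. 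Consequently, the even-dimensional half of the theorem \emph{is} your Step~1, for which you give no argument (``I would show''). The odd-dimensional half is essentially your claim that a maximal torus $\mathfrak s$ of $\mathfrak n$ has corank at most one, for which you only write ``should force''.

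As you note, Step~1 can fail: for $\mathfrak v=\C(sT+X-iY)$, $s\neq0$, on $\SU(2)$ one checks $\mathfrak n=\{0\}$. So a genuinely new mechanism is needed to produce a large torus normalizing $\mathfrak v$. Feeding maximality of $\dim\mathfrak v$ into your $\mathfrak s$-weight decomposition does not supply it. It only gives that $\mathfrak v_0=\mathfrak v\cap\mathfrak z_{\mathfrak g_\C}(\mathfrak s)$ is of hypersurface type in the centralizer and that $\mathfrak v^\nu\oplus\overline{\mathfrak v^{-\nu}}=\mathfrak g_\C^\nu$ for every weight $\nu\neq0$. Neither fact constrains the rank of $\mathfrak s$. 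For instance, when $\mathfrak s$ is just the centre of $\mathfrak g$, your claim amounts to proving that $[\mathfrak g,\mathfrak g]$ has rank at most one, and nothing in the proposal addresses this. One natural entry point would be to show that a maximal $\mathfrak v$ is solvable, hence lies in a Borel subalgebra $\mathfrak b'$ with $\mathfrak b'\cap\overline{\mathfrak b'}=\mathfrak t'_\C$ for a maximal torus $\mathfrak t'$, and then to analyse $\mathfrak v$ inside $\mathfrak b'$. This is where the substance of the classification lies.

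Even granting corank one, the passage to $\Theta(\alpha,\mathfrak m,x,t)$ is not a computation in $\mathfrak{su}(2)\oplus\R^{d-1}$. The $\mathfrak s$-weight spaces $\mathfrak g_\C^\nu=\bigoplus_{\beta|_{\mathfrak s}=\nu}\mathfrak g_\beta$ contain entire $\alpha$-strings, so the weight decomposition does not show that $\mathfrak v$ contains any root space. Concretely, suppose $t+x\in\mathfrak v$ with $\alpha(t)\neq0$, and take a string $\mathfrak g_\beta\oplus\mathfrak g_{\beta+\alpha}$. The $\operatorname{ad}(t+x)$-stable lines in it are $\mathfrak g_{\beta+\alpha}$ and a non-root line $\C(X_\beta+cX_{\beta+\alpha})$ with $c\neq0$. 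Both are compatible with the complement condition $\mathfrak v^\nu\oplus\overline{\mathfrak v^{-\nu}}=\mathfrak g^\nu_\C$.

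To exclude such configurations, and to show that $\alpha$ is simple with $\bigoplus_{\beta\in\Delta_+\setminus\{\alpha\}}\mathfrak g_\beta\subset\mathfrak v$ after a suitable conjugation, you must analyse two things together:
\begin{itemize}
\item the $\mathfrak v_0$-submodules of the $\mathfrak z_{\mathfrak g_\C}(\mathfrak s)$-modules $\mathfrak g_\C^\nu$;
\item the bracket conditions $[\mathfrak v^\nu,\mathfrak v^\mu]\subset\mathfrak v^{\nu+\mu}$.
\end{itemize}
Your proposed rank-one reduction only classifies $\mathfrak v_0$ and says nothing about these off-diagonal pieces.
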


        From this point on, we assume that $G$ is odd-dimensional without further mention.

    \begin{rmk}
    
    By using the CR0 or CR1 decomposition, one can show that a CR Lie algebra of hypersurface type is Levi-degenerate if $\mathfrak m \neq \{0\}$. The only compact Lie group that can have a non-degenerate left-invariant CR structure of hypersurface type is $\operatorname{SU}(2)$ with the standard CR structure. 
    
    \end{rmk}

    \subsection{The division condition \DC}
    \label{sec:division_condition}
    
    Let $\mathcal{V}$ be a left-invariant CR structure on $G$ with associated Lie algebra $\mathfrak v$ and toral part $\mathfrak m \subset \mathfrak t_\C$.
    Let $L \in \mathfrak t_\C$, $d = \dim \T = \R^d/\Z^d$, and let $(x_1, \dots, x_d)$ be the standard coordinates on $\T$. Then $L$ has the form $L = \sum_{j=1}^d a_{j} \frac{\partial}{\partial x_j}$ with $a_{j} \in \C$. The symbol of $L$ is defined to be
    $$\widehat{L}(\xi) = i \sum_{j=1}^d a_{j}\xi_j$$ for $\xi =(\xi_1, \dots, \xi_d) \in \Z^d$. We say that $\mathfrak{m}$ satisfies the division condition \DC if there exists a basis $\{L_1, \ldots, L_r\}$ of $\mathfrak m$ and constants $C, \rho > 0$ such that
    $$
        \max_j |\widehat{L_j}(\xi)| \geq C(1 + |\xi|)^{-\rho}, \quad \forall \xi \in \mathbb{Z}^d \setminus \{0\}.
    $$

    The \DC condition generalizes a condition from \cite{greenfield_global_1972} that characterizes the regularity of solutions to a partial differential equation and has been extensively studied in relation to solvability and regularity. For further results on the division condition and its generalizations in systems of partial differential equations, see \cite{araujo_global_2024} and references therein.

    The class of subalgebras of type $\CRO$ provides many examples of CR structures whose toral part satisfies the \DC condition. In fact, for every algebra $\mathfrak m \subset \mathfrak t_\C$ satisfying \DC, we get a CR structure of type $\CRO$ whose toral part satisfies \DC by setting
    \[
        \mathfrak v \doteq \mathfrak m \oplus \bigoplus_{\alpha \in \Delta_+} \mathfrak g_\alpha.
    \]
     \begin{exa} \label{exa:nice_little_example}
    Consider a 3-torus $\T = \R^3 / \Z^3$ with coordinates $(x,y,t)$. For $\lambda \in \R$ consider the vector field 
    \[
    L_{\lambda} = \lambda  \frac{\del}{\del x}  + \frac{\del}{\del y} + i \frac{\del}{\del t}
    \]
    on $\T$. The algebra $\mathfrak m_\lambda \doteq \operatorname{span} \{L_{\lambda}\}$ defines a CR structure on $\T$ and satisfies $\DC$ if and only if $\lambda \in \R \setminus \Q$ is not a Liouville number. As described above, we can now use $\mathfrak m_\lambda$ to endow either $\T^2 \times \SU(2)$ or $\SU(4)$ with a CR structure of type $\CRO$ whose toral part satisfies \DC depending on the choice of $\lambda$.
    \end{exa}

    Our first result shows that no Lie algebra of type $\CRI$ can have a toral part satisfying \DC. 

    \begin{thm}
        \label{thm:nodc_nocr1}
        If $\mathfrak v$ is of type $\CRI$, then the toral part of $\mathfrak v$ cannot satisfy $\DC$.
    \end{thm}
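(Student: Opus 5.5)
The strategy is to use the constraint $\mathfrak m \subset \ker\alpha$ from the definition of $\mathcal D(\Delta_+)$ to find a nonzero lattice point on which every element of $\mathfrak m$ has vanishing symbol. That single point violates \DC.

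Let $\mathfrak v = \Theta(\alpha, \mathfrak m, x, t)$ with $(\alpha,\mathfrak m,x,t)\in\mathcal D(\Delta_+)$. Since \DC is invariant under conjugation (conjugating by $\operatorname{Ad}(g)$ maps the maximal torus to another maximal torus and respects the lattice), we may assume $\mathfrak v$ has exactly this form. By definition the toral part satisfies $\mathfrak m \subset \ker\alpha$. The plan is to produce a single $\xi_0 \in \Z^d\setminus\{0\}$ with $\widehat L(\xi_0)=0$ for every $L\in\mathfrak m$. Then, for any basis $L_1,\dots,L_r$, we have $\max_j|\widehat{L_j}(\xi_0)| = 0 < C(1+|\xi_0|)^{-\rho}$, so \DC fails.

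The candidate is $\xi_0$ determined by the root $\alpha$ itself. Write $\mathfrak t = \R^d$ via the coordinates with $\T = \R^d/\Z^d$, so that the kernel of $\exp|_{\mathfrak t}$ is the lattice $\Z^d$. For $L=\sum a_j\partial_{x_j}$, the symbol $\widehat L(\xi) = i\sum a_j\xi_j = i\,\langle \xi, L\rangle$ is given by the linear functional on $\mathfrak t_\C$ attached to $\xi$. Since $\alpha$ is a root of the torus $\T$, it comes from a character $\chi_\alpha\colon \T\to S^1$ with $\chi_\alpha(\exp H) = e^{\alpha(H)}$. Hence $\alpha$ takes values in $2\pi i\Z$ on the lattice $\Z^d$, and in the coordinates $\alpha(H) = 2\pi i\sum_j k_j H_j$ for some $k=(k_1,\dots,k_d)\in\Z^d$, with $k\neq 0$ because roots are nonzero. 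Put $\xi_0 = k$; up to the $2\pi$ normalization convention of the coordinates, $\widehat L(\xi_0)$ is a nonzero constant multiple of $\alpha(L)$. For $L\in\mathfrak m\subset\ker\alpha$ this gives $\widehat L(\xi_0) = 0$.

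The only delicate step is checking the integrality claim in the paper's normalization: roots of a compact group, restricted to $\mathfrak t$, are $i$ times integral weights of the lattice $\ker(\exp|_{\mathfrak t})$ (up to a factor $2\pi$). This is standard, since $\operatorname{Ad}$ restricted to $\T$ decomposes into characters $e^{\alpha}$. Once that is in place, the argument is finished and involves no analysis.
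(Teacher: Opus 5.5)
Your argument is correct, and it reaches the conclusion by a more direct route than the paper's.

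The paper argues by contradiction through a subgroup of $\T$. Since the root (the paper's $\beta$, your $\alpha$) is purely imaginary on $\mathfrak t$, it gets $\mathfrak m\oplus\overline{\mathfrak m}\subset\ker\beta$. It then forms $\mathfrak k=(\mathfrak m\oplus\overline{\mathfrak m})\cap\mathfrak t$ and $K=\exp(\mathfrak k)$. It notes that \DC forces $K$ to be non-closed: otherwise, functions pulled back from $\T/K$ would be non-constant CR functions on $\T$, whereas under \DC every CR function on $\T$ is constant. Finally, it uses the dimension count $\dim\mathfrak k=\dim\mathfrak t-1$ to conclude that a non-closed $K$ is dense, which would force $\beta$ to vanish on $\T$.

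You instead exhibit the obstruction explicitly. Integrality of the root gives a nonzero lattice point $\xi_0$ with $\widehat L(\xi_0)=\alpha(L)/(2\pi)=0$ for all $L\in\mathfrak m$. Equivalently, the non-constant character $\chi_\alpha$ is itself a CR function on $\T$.

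Your route needs only $\mathfrak m\subset\ker\alpha$ and that $\alpha$ is a nonzero integral weight. So it does without several ingredients of the paper's proof:
\begin{itemize}
\item the conjugation-invariance of $\ker\beta$;
\item the dimension count;
\item the closure/density argument;
\item the cited fact about CR functions under \DC.
\end{itemize}
It also proves slightly more: any $\mathfrak m\subset\mathfrak t_\C$ annihilated by a nonzero weight of $\T$ fails \DC. It fails already at one explicit frequency, for every choice of basis, $C$ and $\rho$.

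Finally, your version makes explicit the integrality that the paper's last step uses only tacitly. Passing from ``$\beta$ vanishes along $K$'' to ``$\beta$ vanishes on $\overline K=\T$'' really requires that $\beta$ exponentiate to a character $e^{\beta}$ of $\T$. That is exactly the fact your proof is built on.
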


    \begin{proof}
        If $\mathfrak v$ is of type $\CRI$, then we can write $$\mathfrak v = \mathfrak m \oplus \bigoplus_{\alpha \in \Delta_+ \backslash \{ \beta \}} \mathfrak g_\alpha \oplus \operatorname{span}_\C \{t + x\}$$ with $\beta$ a simple root, $\mathfrak m \subset \ker \beta$, $\mathfrak m \cap \mathfrak g = \{0\}$, $t \in \mathfrak t \backslash \{0\}$, and $x \in \mathfrak g_\beta \backslash \{0\}.$

        Since $\beta \in \mathfrak t_\C^*$ is purely imaginary, we have that $\mathfrak m\oplus \overline{\mathfrak m} \subset \ker \beta.$ Now notice that $\mathfrak k = (\mathfrak m \oplus \overline{\mathfrak m}) \cap \mathfrak t$ is a real Lie algebra, which we can integrate into a Lie group $K = \exp_\T(\mathfrak k)\subset \T$.
        
        If $\mathfrak m$ satisfies \DC, then $K$ cannot be closed. In fact, if $K$ were closed, then $\T/K$ would be a Lie group of positive dimension, and any smooth function on $\T/K$ would lift to a function on $\T$ that is constant along the cosets of $K$, hence annihilated by $\mathfrak k_\C$ and in particular by $\mathfrak m$. This would produce non-constant CR functions on $\T$, contradicting the fact that under \DC every CR function on $\T$ is constant.

        Now, by taking left-translations, the root $\beta$ can be seen as a 1-form vanishing over $K$. If $K$ is not closed, since $\dim \mathfrak k = \dim \mathfrak t-1$, we have that $K$ is dense in $\T$ and thus $\beta$ vanishes in $\T$, contradicting the fact that $\beta$ is a root.
    \end{proof}

    \subsection{The tangential CR complex}
    \label{sec:involutive_structures}

    This section briefly recalls the construction of the differential complex for CR and complex structures; see \cite{berhanu_introduction_2008, boggess_cr_1991} for details.

    Let $\Omega$ be a smooth and orientable manifold of dimension $N$. We recall that a CR structure on $\Omega$ is a smooth subbundle $\mathcal V$ of $T_\C \Omega$ such that $\mathcal V \cap \overline{\mathcal V} = \{0\}$ and the Lie bracket of any two smooth local sections of $\mathcal V$ is again a smooth section of $\mathcal V$. In general, if $W$ is a smooth vector bundle, we denote by $ \mathscr C^\infty(\Omega, W)$ the space of sections of $W$ with smooth coefficients. 

    Let $\Ann \mathcal V \subset  T^*_\C \Omega$ be the annihilator of $\mathcal V$ and define $\underline \Lambda^{q} \doteq \Wedge^{q} ( T^*_\C \Omega / \Ann \mathcal V)$, then there is a unique differential map
    \begin{equation}
    \label{eq:dprime}
        \dbarb \colon \mathscr C^\infty(\Omega, \underline \Lambda^{q}) \to \mathscr C^\infty(\Omega, \underline \Lambda^{q+1})
    \end{equation}
    making the diagram
    \begin{equation}
        \label{eq:diagram}
        \begin{tikzcd}
            \mathscr C^\infty(\Omega, \Wedge^q  T^*_\C \Omega) \arrow[r, "\dext"] \arrow[d, "\pi_q"] & \mathscr C^\infty(\Omega, \Wedge^{q+1}  T^*_\C \Omega) \arrow[d, "\pi_{q+1}"] \\
            \mathscr C^\infty(\Omega, \underline \Lambda^{q}) \arrow[r, "\dbar_b"]           & \mathscr C^\infty(\Omega, \underline \Lambda^{q+1})         
        \end{tikzcd}
    \end{equation}
    commute. Here, $\dext$ is the usual exterior derivative for differential forms on $\Omega$.
     
    Since $\dext \circ \dext = 0$, we get that $\dbarb \circ \dbarb = 0$ and thus $\dbarb$ defines a differential complex. For each $q$, the associated cohomology spaces are defined as
    \begin{equation*}
      H^{q} (\Omega, \mathcal V)
      \doteq
      \frac{ \operatorname{ker} \{ \dbarb \colon \mathscr C^\infty(\Omega, \underline \Lambda^{q}) \to \mathscr C^\infty(\Omega, \underline \Lambda^{q + 1}) \} }
           { \operatorname{ran} \{ \dbarb \colon \mathscr C^\infty(\Omega, \underline \Lambda^{q - 1}) \to \mathscr C^\infty(\Omega, \underline \Lambda^{q}) \} }.
    \end{equation*}
    Throughout the text, we will also consider the cohomology spaces $H^{q}_{} (U, \mathcal V)$ defined by restricting the $\dbarb$-complex to an open subset $U\subset \Omega$.
    
    \begin{rmk}
       In the complex case, i.e.,  $\mathcal V \oplus \overline{\mathcal V} = T_\C \Omega$, the differential map $\dbar_b$ coincides with the usual complex operator $\dbar$ and $H^q(\Omega, \mathcal{V})$ is the Dolbeault cohomology group $H^{0,q}_{\dbar}(\Omega)$.
    \end{rmk}
     
    We will usually be in the situation where $\mathcal{V}$ is a left-invariant CR structure with corresponding Lie algebra $\mathfrak v$ on a compact Lie group $G$. In this case, we denote the space $H^{q} (\Omega, \mathcal V)$ by $H^{q}(G, \mathfrak v)$. Suppose the left-invariant CR structure $\mathcal{V}$ is of hypersurface type. Then, as we saw in the previous sections (see Theorem \ref{thm:classification_theorem}), there exists a maximal torus $\T \subset G$ with Lie algebra  $\mathfrak t$ and a Lie algebra $\mathfrak m \subset \mathfrak t_\C$ defining a bi-invariant CR structure on $\T$ such that $\mathfrak v = \mathfrak m \oplus \mathfrak b$ with $\mathfrak b$ a Lie algebra.

    Consider the $\dbar_b$-complex on $\mathbb T$ with respect to $\mathfrak{m}$. If $\mathfrak m$ satisfies \DC, it follows from \cite[Lemma 3]{jacobowitz_levi-flat_2023} that we have an injective map
    \[
         H^{q}(\mathbb T, \mathfrak{m}) \hookrightarrow  H^{q}(G, \mathfrak{v}).
    \]
    Let us briefly recall how this map is constructed: if \DC is satisfied, then every element of $H^q(\mathbb T,\mathfrak m)$ has a left-invariant representative, which we can extend by zero to identify with an element $u\in \mathfrak g^\ast_\C$. Then, we simply extend this element to $G$ via left-translation, $w_g = (L_{g^{-1}})^\ast u$, and check that the resulting object defines a class in $H^q(G,\mathfrak v)$. For more details, see  \cite[Lemma~3]{jacobowitz_levi-flat_2023}.
    
    The following provides an example that the cohomology groups $H^q(G, \mathfrak v)$ are not necessarily finite-dimensional if the toral part of $\mathfrak v$ does not satisfy \DC.
    \begin{exa}\label{exa:SU(2)}
   Consider the Lie group $\SU(2)$. Its Lie algebra $\mathfrak{su}(2)$ is generated by the matrices
        $$ X = \left(  \begin{matrix} 0 & i \\ i & 0 \end{matrix} \right), \quad Y = \left(  \begin{matrix} 0 & -1 \\ 1 & 0 \end{matrix} \right), \quad  T = \left(  \begin{matrix} i & 0 \\ 0 & -i \end{matrix} \right).$$ It is easy to verify that these matrices satisfy the following bracket relations:
        \begin{equation}
        [T, X] = 2Y, \quad [T, Y] = -2X, \quad [X, Y] = 2T.
        \end{equation}
        Moreover, the group $\SU(2)$ can be identified with the unit sphere $\mathbb{S}^3$ in $\C^2.$ The complex structure from $\C^2$ defines a CR structure on $\mathbb{S}^3$. This structure can be pulled back to $\SU(2)$, and it is easy to verify that it is left-invariant of type $\CRO$, and generated by the vector $L = X - iY$ (cf. \cite[pg 498]{jahnke_elliptic_2023}). The toral part of this CR structure vanishes and thus does not satisfy \DC. Now, the cohomology group $H^0(SU(2), \mathfrak v)$ is infinite-dimensional since the restriction of every holomorphic polynomial in $\C^2$ to $\mathbb{S}^3$ defines a CR function of the CR manifold $(\SU(2), \mathfrak v)$ (cf. \cite[Chapter 9]{boggess_cr_1991}).
    \end{exa}

    \subsection{Overview of the main arguments} We shall briefly sketch the ideas involved in the proof of the main results. The first step is an algebraic reformulation of the cohomology space $H^{q}(G,\mathfrak v)$ (which is, a priori, an analytic object coming from the $\dbar_b$-complex). Applying the language of Lie algebra cohomology and well-known identification of complexes, we can obtain a canonical isomorphism $H^{q}(G,\mathfrak v) \cong H^q(\mathfrak v, \mathscr C^\infty(G))$, where the latter space is the cohomology of the Lie algebra $\mathfrak v$ (which defines the CR structure) with values in the infinite-dimensional module $\mathscr C^\infty(G)$ of smooth $\C$-valued functions on $G$. At this point, we perform a Fourier decomposition of $\mathscr C^\infty(G)$, which correspondingly yields a decomposition of the cohomology
    \begin{equation}
        \label{eq:decomp_intro}
        H^q(\mathfrak v, \mathscr C^\infty(G)) \cong \widehat \bigoplus_{[\xi] \in \widehat{G}}H^q(\mathfrak v, \End(H_\xi)),
    \end{equation}
    where $\widehat{G}$ is the usual space of (equivalence classes of) continuous, unitary representations of $G$ and $H_{\xi}$ is a finite-dimensional vector space. The goal is to show that, for non-trivial $\xi$, the summand $H^q(\mathfrak v, \End(H_\xi))$ vanishes. Applying the classification result of Charbonnel--Khalgui (Theorem \ref{thm:classification_theorem}), we can decompose  $\mathfrak v = \mathfrak m \oplus \mathfrak b$, where $\mathfrak m \subset \mathfrak t_{\C}$ is a bi-invariant CR structure on a maximal torus $\mathbb T\subset G$ and, under \DC, $\mathfrak b$ is an ideal. 
    Since $\mathfrak{m}$ acts completely reducibly on $\End(H_\xi)$, we can apply a suitable generalization of a theorem of Hochschild--Serre \cite{hochschild_cohomology_1953} and obtain an isomorphism
    \[
    H^q(\mathfrak v, \End(H_\xi)) \cong \bigoplus_{j+k=q}H^j(\mathfrak m, \C)\otimes H^k(\mathfrak b, \End(H_\xi))^\mathfrak m.
    \]
    At this point, \DC allows us to show that $H^k(\mathfrak b, \End(H_\xi))=0$, which ultimately comes from a semilocal calculation performed in a tubular neighborhood of the maximal torus (cf. Lemma \ref{lem:invariance_functions}). 
    Finally, we use standard results on the elliptic complex given by the operator $\dbar$ on the compact complex manifold $G/\mathbb T$ to extract an isomorphism $H^q(G, \mathfrak v)\cong H^q(\mathfrak m, \C)$ from the isomorphism \eqref{eq:decomp_intro}, which implies that $H^q(G, \mathfrak v)$ is finite-dimensional. Now, the statements in Theorem \ref{thm:main} and Corollary \ref{cor:left-invariant cohomology} follow from dimensionality reasons.

    Finally, for Theorem \ref{thm:necessity}, it remains to show that, for $\CRO$ structures, failure of $\DC$ implies infinite-dimensional cohomology. The idea is to apply the theory of weights: assuming there is a sequence $(\xi_N)$ of Fourier frequencies in which the symbol of the CR vector fields induced on the maximal torus becomes very small, one applies elements of the Weyl group of $\T\subset G$ in order to realize them by dominant weights. Then, one can find a sequence of representations of $G$ which have these as highest weights, and by the general formalism, they vanish when differentiated by vectors in the positive root space $\mathfrak{b}_{\mathfrak t}$. Therefore, the CR vector fields act on these functions only by the toral directions. 

    When in this situation, one proceeds classically (see for example \cite{araujo_real_2026}) and can show that, either there are infinitely many independent global CR functions (hence $H^0(G,\mathfrak v)$ is infinite-dimensional), or $H^1(G,\mathfrak v)$ is non-Hausdorff (and therefore infinite-dimensional) by explicitly constructing a $1$-form as an infinite series which is not exact, but has each partial sum exact. This concludes the proof.

    \section{Lie algebra cohomology}
    \label{sec:cohomology_of_lie_algebras}

        This section summarizes the basic definitions and results of Lie algebra cohomology. For a full introduction, see \cite[Chapter 7.5]{hilgert_structure_2012} and \cite{hochschild_cohomology_1953}. 

        Let $\mathfrak{h}$ be a (complex) Lie algebra and let $M$ be a $\mathfrak{h}$-module, i.e., a vector space equipped with a Lie algebra homomorphism $\rho\colon\mathfrak{h} \to \mathfrak{gl}(M)$. For $X \in \mathfrak{h}$ and $x \in M$, we denote $\rho(X)x$ by $X \cdot x$. The invariant submodule is defined as
        \[
        M^{\mathfrak h} = \{ x \in M \, | \, \textup{$X \cdot x =0$ for all $X \in \mathfrak h$} \}.
        \]
        Following the notation of \cite{hochschild_cohomology_1953}, we denote by $C^p(\mathfrak{h}, M)$ the vector space of all alternating multilinear $p$-forms on $\mathfrak{h}$ with values in $M$, and let $C^\bullet(\mathfrak{h}, M) = \bigoplus_{p \geq 0} C^p(\mathfrak{h}, M)$ be the direct sum of all $C^p(\mathfrak{h}, M)$. 

        For any $p \in \N$, let $u$ be an alternating $p$-linear form on $\mathfrak{h}$ with values in $M$, and let $X_1, \ldots, X_{p+1} \in \mathfrak{h}$. We define the operator $\dext\colon C^p(\mathfrak{h}, M) \to C^{p+1}(\mathfrak{h}, M)$ by
        \begin{equation}
        \label{eq:algebraic_diff_op}
            \begin{aligned}
                 \dext u(X_1, \ldots, X_{p+1}) & = \sum_{j=1}^{p+1} (-1)^{j+1} X_j \cdot u(X_1, \ldots, \widehat{X}_j, \ldots, X_{p+1}) \\
                    & + \sum_{j < k} (-1)^{j+k+1} u([X_j, X_k], X_1, \ldots, \widehat{X}_j, \ldots, \widehat X_k, \ldots, X_{p+1}),
            \end{aligned}
        \end{equation}
        in which $\widehat{X}_j$ denotes the omission of $X_j$.
            
        An easy calculation shows that $\dext$ satisfies $\dext \circ \dext = 0$, so we have a cochain complex $(C^\bullet(\mathfrak{h}, M), \dext)$. The cohomology spaces of this complex are denoted by $H^\bullet(\mathfrak{h}, M)$. The following is an important example that will be further expanded in the text.

        \begin{exa}
            Let $G$ be a compact Lie group with its complexified Lie algebra $\mathfrak g_\C$. By identifying $\mathfrak g_\C$ with complex left-invariant vector fields on $G$, we can endow $\mathscr C^\infty(G)$ with a structure of $\mathfrak g_\C$-module. It is straightforward to verify that $H^\bullet(\mathfrak{g}_\C, \mathscr C^\infty(G))$ corresponds to the usual de Rham cohomology of $G$ with complex coefficients.
        \end{exa}

        If $\mathfrak{b}$ is an ideal of $\mathfrak{h}$ and $M$ is a $\mathfrak h$-module, we can define a $\mathfrak{h}$-module structure on the spaces $C^p(\mathfrak{b}, M)$. In fact, $C^0(\mathfrak{b}, M) = M$ is already a $\mathfrak{h}$-module, and for $p > 0$, $u \in C^p(\mathfrak{b}, M)$, $X \in \mathfrak{h}$, and $Y_1, \ldots, Y_p \in \mathfrak{b}$, we define the $\mathfrak{h}$-action on $u$ by 
        \begin{equation}
        \label{eq:lie_derivative}
            (\mathcal{L}_X u)(Y_1, \ldots, Y_p) = X \cdot u(Y_1, \ldots, Y_p) - \sum_{i=1}^p u(Y_1, \ldots, [X,Y_i], \ldots, Y_p).
        \end{equation}
                        
        We have $\mathcal L_X u \in C^p(\mathfrak b, M)$ for all $u \in C^p(\mathfrak b, M)$ and that $\mathcal L$ depends linearly on $X\in \mathfrak h$ and on $u$.

       Recall that a module is called \textit{completely reducible} if it is the direct sum of irreducible submodules. Further, a subalgebra $\mathfrak m \subset \mathfrak h$ is called reductive in $\mathfrak h$ if the adjoint action of $\mathfrak m$ on $\mathfrak h$ is completely reducible.
       
       The following generalization of \cite[Theorem 13]{hochschild_cohomology_1953} (see also \cite[Theorem 4.1]{hochschild_differential_1962} and \cite[Theorem 2.28]{solleveld}) is used crucially in the proof of Theorem \ref{thm:main} given in Section \ref{sec:the_proof}. 
       For the reader’s convenience, we restate the theorem in our notation.
        
        \begin{thm}
        \label{thm:hsc_thm_13}
            Let $\mathfrak h$ be a finite-dimensional complex Lie algebra, and let $M$ be a finite-dimensional $\mathfrak h$-module. Suppose that we can write $\mathfrak h = \mathfrak m \oplus \mathfrak b$ as a direct sum of subalgebras, where $\mathfrak b \subset \mathfrak h$ is an ideal, $\mathfrak m$ is reductive in $\mathfrak h$, and $M$ is completely reducible as an $\mathfrak m$-module. Then,
            \[
                H^n(\mathfrak h, M) \cong \sum_{i+j=n} H^i(\mathfrak m, \C) \otimes H^j(\mathfrak b, M)^{\mathfrak m},
            \]
            for all $n \geq 0$.
        \end{thm}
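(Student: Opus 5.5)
The plan is to run the Hochschild--Serre spectral sequence for the ideal $\mathfrak b \subset \mathfrak h$ and show that it collapses at $E_2$, with the $E_2$-term identified as the stated tensor product. The filtration on $C^\bullet(\mathfrak h, M)$ is the standard one: $F^p C^n$ consists of cochains vanishing whenever $n-p+1$ arguments lie in $\mathfrak b$. By \cite{hochschild_cohomology_1953}, $E_1^{p,q} \cong C^p(\mathfrak h/\mathfrak b, H^q(\mathfrak b, M))$, and then $E_2^{p,q} \cong H^p(\mathfrak h/\mathfrak b, H^q(\mathfrak b, M))$. Here $\mathfrak h/\mathfrak b \cong \mathfrak m$ acts on $H^q(\mathfrak b,M)$ through the operator $\mathcal L$ of \eqref{eq:lie_derivative}; this is well defined on cohomology because $\mathcal L_Y$ is null-homotopic for $Y\in\mathfrak b$ by the Cartan formula.

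The first step is to show that $H^q(\mathfrak b, M)$ is a completely reducible $\mathfrak m$-module. The space $C^q(\mathfrak b, M) \cong \Wedge^q \mathfrak b^* \otimes M$ is a tensor product of $\mathfrak m$-modules. Each factor is completely reducible: $M$ by hypothesis, and $\mathfrak b^*$ because $\mathfrak m$ is reductive in $\mathfrak h$ and $\mathfrak b$ is an $\mathfrak m$-stable summand. However, tensor products of completely reducible modules need not be completely reducible in general. This is where the structure of $\mathfrak m$ enters. Reductivity in $\mathfrak h$ forces $\mathfrak m$ to be reductive, so $\mathfrak m = \mathfrak z \oplus [\mathfrak m,\mathfrak m]$. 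Complete reducibility of $M$ and of $\mathfrak b$ then means that $\mathfrak z$ acts semisimply on each. Tensor products of modules on which $\mathfrak z$ acts semisimply again have this property, and the semisimple part is handled by Weyl's theorem, so the tensor product is completely reducible. I expect this to be the main technical point, namely recovering exactly the hypotheses of \cite[Theorem 4.1]{hochschild_differential_1962}. Since $\mathcal L$ commutes with $\dext_{\mathfrak b}$, the cohomology is a subquotient of a completely reducible module and is therefore completely reducible.

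Next I would apply \cite[Theorem 10]{hochschild_cohomology_1953}: for a reductive Lie algebra $\mathfrak m$ and a finite-dimensional module $V$ on which its center acts semisimply, $H^p(\mathfrak m, V) \cong H^p(\mathfrak m,\C)\otimes V^{\mathfrak m}$. To see this, split $V = V^{\mathfrak m} \oplus \mathfrak m\cdot V$. The nontrivial isotypic part has vanishing cohomology: use Whitehead's lemmas for the semisimple part, and for the center use an element acting invertibly, whose Lie derivative is null-homotopic. This gives
\[
E_2^{p,q} \cong H^p(\mathfrak m,\C)\otimes H^q(\mathfrak b, M)^{\mathfrak m}.
\]

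The final step is degeneration at $E_2$, and the splitting $\mathfrak h = \mathfrak m \oplus \mathfrak b$ as subalgebras is what makes it possible. I would construct an explicit chain map $C^\bullet(\mathfrak m,\C)\otimes C^\bullet(\mathfrak b, M)^{\mathfrak m} \to C^\bullet(\mathfrak h, M)$: given $\omega$ on $\mathfrak m$ and an $\mathfrak m$-invariant $u$ on $\mathfrak b$, pull them back along the projections $\mathfrak h \to \mathfrak m$ and $\mathfrak h\to\mathfrak b$ and form the wedge product. Because $\mathfrak m$ is a subalgebra and $u$ is $\mathfrak m$-invariant, the mixed terms in $\dext$ cancel, so this is a chain map. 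It is compatible with the filtrations and induces the above identification on $E_2$. Passing to cohomology, the left side is $\bigoplus H^i(\mathfrak m,\C)\otimes H^j(\mathfrak b,M)^{\mathfrak m}$, where taking invariants commutes with cohomology by complete reducibility. Hence all differentials $d_r$ with $r\ge 2$ vanish, and a comparison of spectral sequences yields the isomorphism for every $n$.
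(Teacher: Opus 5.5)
Your proof is correct. It follows a genuinely different route, because the paper does not prove this statement at all: it quotes it from Hochschild--Serre's Theorem 13 and its generalizations by Hochschild and Solleveld.

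\textbf{Comparison of routes.} The argument behind those citations goes through cohomology relative to the reductive subalgebra $\mathfrak m$. The relative Hochschild--Serre spectral sequence has $E_2$-term $H^\bullet(\mathfrak m,\C)\otimes H^\bullet(\mathfrak h,\mathfrak m;M)$. The splitting $\mathfrak h=\mathfrak m\oplus\mathfrak b$ with $\mathfrak b$ an ideal is then used both to collapse it and to identify $H^\bullet(\mathfrak h,\mathfrak m;M)\cong H^\bullet(\mathfrak b,M)^{\mathfrak m}$; the paper records this last identification separately as Lemma~\ref{lem:relative_cohomology_identification}.

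You instead use the spectral sequence of the ideal $\mathfrak b$ and proceed in three steps:
\begin{enumerate}
\item You show $H^q(\mathfrak b,M)$ is completely reducible by the center-acts-semisimply criterion.
\item You compute $E_2$ with Hochschild--Serre's theorem for reductive algebras.
\item You force collapse with the explicit filtered chain map $\omega\otimes u\mapsto \pi_{\mathfrak m}^*\omega\wedge\pi_{\mathfrak b}^*u$. Its chain-map property does hold: $\pi_{\mathfrak m}$ is a Lie homomorphism, and $\dext(\pi_{\mathfrak b}^*u)=\pi_{\mathfrak b}^*(\dext_{\mathfrak b}u)$ because the one-$\mathfrak m$-argument terms reassemble into $\mathcal L_Z u=0$ and the two-$\mathfrak m$-argument terms die since $[\mathfrak m,\mathfrak m]\subset\mathfrak m$.
\end{enumerate}
Your route is self-contained and gives explicit cocycle representatives. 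The relative route lands directly in the form $H^\bullet(\mathfrak h,\mathfrak m;M)$, which is what the paper actually needs in Section~\ref{sec:the_proof} to compare $\mathfrak m$ with $\mathfrak t_\C$ and $G/\T$.

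\textbf{Points to tighten.}
\begin{itemize}
\item Whitehead's lemmas only give vanishing in degrees $1$ and $2$. Vanishing of $H^\bullet$ of the semisimple part with coefficients in a nontrivial irreducible module, in all degrees, comes from the Casimir element. It acts by a nonzero scalar on the module but induces zero on cohomology. This is what the Hochschild--Serre theorem you cite actually uses.
\item The claim that your map ``induces the above identification on $E_2$'' needs its one-line justification. Filter the source by $\bigoplus_{i\ge p}C^i(\mathfrak m,\C)\otimes C^\bullet(\mathfrak b,M)^{\mathfrak m}$. On $E_1$ the map is then, up to sign, the inclusion $C^p(\mathfrak m,\C)\otimes H^q(\mathfrak b,M)^{\mathfrak m}\hookrightarrow C^p(\mathfrak m,H^q(\mathfrak b,M))$ of the trivial isotypic summand. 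It becomes an isomorphism on $E_2$ precisely because the complementary summand $\mathfrak m\cdot H^q(\mathfrak b,M)$ is $\mathfrak m$-acyclic.
\item Once the $E_2$ isomorphism is in hand, the comparison theorem for bounded filtrations directly shows your map is a quasi-isomorphism. Vanishing of the higher differentials is then a by-product rather than a separate step.
\end{itemize}
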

        The space $H^j(\mathfrak b, M)^{\mathfrak m}$ in the above theorem is defined with respect to the $\mathfrak m$-module structure on $H^j(\mathfrak b, M)$ given by the $\mathfrak m$-action on $C^j(\mathfrak b, M)$ in \eqref{eq:lie_derivative}.
        
        \subsection{The cohomology induced by subalgebras}
        \label{sec:cohomology_induced_subalgebra}
            
            In this section, we review some basic concepts of Lie algebra cohomology induced by a subalgebra. We note that there are different definitions of Lie algebra cohomology with respect to subalgebras. In this paper, we make use of \emph{two} different notions. First, we introduce the notion corresponding to the algebraic version of the cohomology associated with left-invariant involutive structures. This is the algebraic counterpart of the construction of the operator $\dbarb$ \eqref{eq:diagram}. The reader is referred to \cite[Chapter VII.4]{berhanu_introduction_2008} for further details.
        
            Let $\mathfrak f$ be a subalgebra of $\mathfrak h$. To define the cohomology induced by $\mathfrak f$, for $q \geq 0$, 
            let $N^{q}_{\mathfrak f}(\mathfrak h, M)$ be the set of all $u \in C^{q}(\mathfrak h, M)$ that vanish on $\Wedge^q\mathfrak f$. By convention, if $q < 0$, we set $N^{q}_{\mathfrak f}(\mathfrak h, M) = {0}$. Notice that the inclusion 
            \begin{equation}
            \label{eq:relative}
                \dext N^{q}_{\mathfrak f}(\mathfrak h, M) \subset N^{q+1}_{\mathfrak f}(\mathfrak h, M),
            \end{equation} holds for all $q$.
            We define the quotient space
            $ C^{q}_{\mathfrak f}(\mathfrak h, M) \doteq C^q(\mathfrak h, M) / N^{q}_{\mathfrak f}(\mathfrak h, M)$. From \eqref{eq:relative}, we have an induced map
            $$ \dext' \colon C^{q}_{\mathfrak f}(\mathfrak h, M) \to C^{q+1}_{\mathfrak f}(\mathfrak h, M).$$
        
            Thus, we obtain a cochain complex $(C^\bullet_{\mathfrak f}(\mathfrak h, M), \dext')$ whose cohomology spaces are denoted by $H^{\bullet}_{\mathfrak f}(\mathfrak h, M)$.
            We will be mostly interested in the case where $\mathfrak h = \mathfrak g_\C$ is the complexified Lie algebra of a compact Lie group $G$, and $ \mathfrak f =\mathfrak v$ is the subalgebra associated to a left-invariant CR structure $\mathcal{V}$ on $G$.
            The following describes how to relate the cohomology of the $\dbar_b$ operator and the cohomology induced by the subalgebra $\mathfrak v \subset \mathfrak g_\C$: Notice that there is a canonical isomorphism
                \begin{align*}
                    T_q\colon \mathscr C^\infty(G,\Wedge^qT^*_\C G) &\rightarrow C^q(\mathfrak g_{\C}, \mathscr C^\infty(G))
                \end{align*}
                defined in the following way (for $q\geq 1$, as in $q=0$ these spaces are identical):
                \begin{equation}
                    (T_q \omega)(v_1,\ldots,v_q)(g)\doteq \omega_g(\dext L_{g}v_1,\ldots,\dext L_{g}v_q),
                \end{equation}
                where $\omega \in \mathscr C^\infty(G, \Wedge^q T_\C^* G)$ is a $q$-form on $G$, $v_1,\ldots,v_q \in \mathfrak{g}_{\C}$, $g \in G$ and $L_g$ is the left-translation by $g$.
                It is easy to see that this map commutes with the differentials and maps the subspace $\{\omega \in \mathscr C^\infty(G,\Wedge^qT^*_\C G) \, | \, \,\omega(X_1,\ldots,X_q)=0\text{ for all }X_j \in \mathcal{V}\}$ isomorphically onto $N^q_{\mathfrak v}(\mathfrak g_\C, \mathscr C^\infty(G))$. Therefore, it induces an isomorphism of complexes
                \[
                \left(C^{\bullet}_{\mathfrak v}(\mathfrak g_\C, \mathscr C^\infty(G)), \dext' \right) \cong \left(\mathscr C^\infty(G, \underline \Lambda^{\bullet}),\dbar_b\right),
                \]
                which yields a canonical isomorphism $H^\bullet(G, \mathfrak{v})\cong H^{\bullet}_{\mathfrak v}(\mathfrak{g}_\C, \mathscr C^\infty(G))$. The following identification will play an important role in the proof of Theorem \ref{thm:main}.
            \begin{prp}\label{prp:iso_dbar_algebra}
            Let $G$ be a compact Lie group with its complexified Lie algebra $\mathfrak g_\C$. If $\mathfrak v \subset \mathfrak g_\C$ is a subalgebra defining a left-invariant CR structure on $G$, then we have
             \begin{equation*}
                H^{\bullet}(G, \mathfrak{v}) \cong H^{\bullet}(\mathfrak{v}, \mathscr C^\infty(G)).
            \end{equation*}
            \end{prp}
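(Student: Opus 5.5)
The plan is to show that the quotient complex $C^\bullet_{\mathfrak v}(\mathfrak g_\C, M)$, with $M = \mathscr C^\infty(G)$, is isomorphic as a complex to the Chevalley--Eilenberg complex $C^\bullet(\mathfrak v, M)$ of the subalgebra $\mathfrak v$ acting on $M$ by left-invariant vector fields. Combined with the canonical isomorphism $H^\bullet(G,\mathfrak v)\cong H^\bullet_{\mathfrak v}(\mathfrak g_\C, \mathscr C^\infty(G))$ established just above via the maps $T_q$, this proves the proposition.

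The key map is restriction. For each $q$, define
\[
r_q \colon C^q(\mathfrak g_\C, M) \to C^q(\mathfrak v, M), \qquad r_q(u) = u|_{\Wedge^q \mathfrak v}.
\]
By definition, the kernel of $r_q$ is exactly $N^q_{\mathfrak v}(\mathfrak g_\C, M)$. Moreover, $r_q$ is surjective: choose a complementary subspace $\mathfrak g_\C = \mathfrak v \oplus \mathfrak w$, and extend any alternating form on $\mathfrak v$ to $\mathfrak g_\C$ by composing with the projection onto $\mathfrak v$ along $\mathfrak w$. This is just linear algebra and needs no compatibility with brackets. Hence $r_q$ induces a linear isomorphism $\bar r_q \colon C^q_{\mathfrak v}(\mathfrak g_\C, M) \to C^q(\mathfrak v, M)$.

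It remains to check that $\bar r$ is a chain map, i.e.\ $r_{q+1}(\dext u) = \dext_{\mathfrak v}(r_q u)$, where $\dext_{\mathfrak v}$ is the differential \eqref{eq:algebraic_diff_op} for the Lie algebra $\mathfrak v$ and module $M$. Evaluate $\dext u$ on $X_1,\dots,X_{q+1}\in\mathfrak v$ using \eqref{eq:algebraic_diff_op}:
\begin{itemize}
\item The first sum involves only the action $X_j\cdot$ restricted to $\mathfrak v$ and values of $u$ on elements of $\mathfrak v$.
\item The second sum involves $u([X_j,X_k],\dots)$, and $[X_j,X_k]\in\mathfrak v$ because $\mathfrak v$ is a subalgebra.
\end{itemize}
So both sums depend only on $r_q u$ and coincide term by term with $\dext_{\mathfrak v}(r_q u)(X_1,\dots,X_{q+1})$. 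This also gives \eqref{eq:relative} for free, so $\dext'$ corresponds to $\dext_{\mathfrak v}$ under $\bar r$.

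Therefore $\bar r$ is an isomorphism of complexes, and taking cohomology yields
\[
H^\bullet_{\mathfrak v}(\mathfrak g_\C, M)\cong H^\bullet(\mathfrak v, M).
\]
Composing with the isomorphism from the $T_q$ gives the claim.

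I do not expect a serious obstacle. The only point needing care is that the $T_q$ identification really intertwines $\dext$ with the algebraic differential, i.e.\ the standard Cartan formula for left-invariant vector fields. The paper takes this as given. Note also that involutivity of $\mathcal V$, equivalently $\mathfrak v$ being a subalgebra, is used exactly in the bracket term of the chain-map check.
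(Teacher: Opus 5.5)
Your proposal is correct and follows the same route as the paper: first identify $H^\bullet(G,\mathfrak v)$ with $H^\bullet_{\mathfrak v}(\mathfrak g_\C, \mathscr C^\infty(G))$ via the maps $T_q$, then identify the quotient complex $C^\bullet_{\mathfrak v}(\mathfrak g_\C, M)$ with the Chevalley--Eilenberg complex $C^\bullet(\mathfrak v, M)$. The only difference is that the paper cites Hochschild--Serre (Theorems 1 and 2) for the second step, whereas you verify it directly through the restriction map $u \mapsto u|_{\Wedge^q \mathfrak v}$, which is exactly the content of that citation.
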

        \begin{proof}
            It was shown in \cite[Theorems~1 and 2]{hochschild_cohomology_1953}, that the previously constructed complexes $(C^{\bullet}_{\mathfrak v}(\mathfrak g, \mathscr C^\infty(G)), \dext')$ and $(C^\bullet(\mathfrak v, \mathscr C^\infty(G)), \dext)$ are isomorphic and thus we have $ H^{\bullet}_{\mathfrak v}(\mathfrak g, \mathscr C^\infty(G)) = H^{\bullet}(\mathfrak v, \mathscr C^\infty(G))$. By the previous discussion this also shows $H^{\bullet}(G, \mathfrak{v}) \cong H^{\bullet}(\mathfrak{v}, \mathscr C^\infty(G))$.
        \end{proof}
            
        \subsection{The cohomology relative to a subalgebra}
        
            In addition to the cohomology associated with left-invariant involutive structures, we introduce a second notion of cohomology. This notion is essential for the application of the Hochschild--Serre spectral sequence.
            
            Again, let $\mathfrak f$ be a subalgebra of $\mathfrak h$.
            We define $B^q(\mathfrak h, \mathfrak f; M)$ as the subspace of $C^q(\mathfrak h,  M)$ annihilated by the interior product $i_X$ for every $X \in \mathfrak f$. Note that $B^q(\mathfrak h,\mathfrak f;M)$ is stable under the action of $\mathfrak f$, so we define $C^q(\mathfrak h, \mathfrak f; M) = B^q(\mathfrak h, \mathfrak f; M)^{\mathfrak f}$ as the invariant elements. Since $\dext C^q(\mathfrak h, \mathfrak f;M) \subset C^{q+1}(\mathfrak h,\mathfrak f; M)$, we obtain a complex  $(C^\bullet(\mathfrak h, \mathfrak f; M), \dext)$ whose cohomology spaces are denoted by $H^\bullet(\mathfrak h, \mathfrak f; M)$. We refer to $H^\bullet(\mathfrak h, \mathfrak f; M)$ as the cohomology of $\mathfrak h$ relative to $\mathfrak f$ with values in $M$.
            
            We will usually use this notion to describe the cohomology of quotients of compact Lie groups. For instance, if $K$ is a connected, closed subgroup of $G$, $\mathfrak k \subset \mathfrak g$ the Lie algebra of $K$ and $\Omega \doteq G/K$, then it follows from \cite[p.~211 (2.7)]{bott_homogeneous_1957} that
            \[H^\bullet(\Omega, \C) = H^\bullet(\mathfrak g_\C, \mathfrak k_\C; \mathscr C^\infty(G)).\] 
            In the CR setting, there is an analogous statement for the Dolbeault cohomology of $\Omega$.

    \begin{prp}
    \label{prp:cohomology_coeff_C}
        Let $G$ be a compact, odd-dimensional Lie group equipped with a CR structure of type $\CRO$ represented by the Lie algebra $\mathfrak v \subset \mathfrak g_\C$. We write $\mathfrak v = \mathfrak m \oplus \mathfrak b$ and denote by $\mathfrak t_\C$ the complexification of the Lie algebra of the associated maximal torus $\T$. Let $\Omega = G/\T$ be equipped with the complex structure induced by $\pi_*(\mathfrak b)$. Then, there is a canonical isomorphism
        \[ H^{0,q}_{\dbar}(\Omega) \cong  H^q(\mathfrak t_\C \oplus \mathfrak b, \mathfrak t_\C; \mathscr C^\infty(G)), \qquad q \geq 0.\]
    \end{prp}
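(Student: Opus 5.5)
We identify both sides with the same complex of smooth forms on $G$. Put $\mathfrak p \doteq \mathfrak t_\C \oplus \mathfrak b$; this is a subalgebra of $\mathfrak g_\C$ because $[\mathfrak t_\C,\mathfrak g_\alpha]\subset \mathfrak g_\alpha$. For $\mathfrak v$ of type $\CRO$ we have $\mathfrak b = \mathfrak b_{\mathfrak t}=\bigoplus_{\alpha\in\Delta_+}\mathfrak g_\alpha$. The plan is to reinterpret the relative complex $C^\bullet(\mathfrak p, \mathfrak t_\C; \mathscr C^\infty(G))$ geometrically, as $\T$-basic forms on $G$ of type $(0,q)$ along the fibres of $\pi\colon G\to\Omega=G/\T$. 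This parallels the Bott identification $H^\bullet(\Omega,\C)=H^\bullet(\mathfrak g_\C,\mathfrak k_\C;\mathscr C^\infty(G))$ quoted above.

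\textbf{Step 1: unwinding the relative complex.} An element $u\in C^q(\mathfrak p,\mathfrak t_\C;\mathscr C^\infty(G))$ is an alternating $q$-form on $\mathfrak p$ with values in $\mathscr C^\infty(G)$. It is killed by $i_X$ for all $X\in\mathfrak t_\C$, so it is determined by its restriction to $\Wedge^q\mathfrak b$. It is also $\mathfrak t_\C$-invariant, meaning $\mathcal L_H u=0$ for $H\in\mathfrak t_\C$. Since $\T$ is connected, this is equivalent to
\[
u(\Ad(s)Y_1,\dots,\Ad(s)Y_q)(gs)=u(Y_1,\dots,Y_q)(g), \qquad s\in\T,\ Y_j\in\mathfrak b .
\]
Here we use that $\mathcal L_H$ in \eqref{eq:lie_derivative} is the differential of the right-translation action twisted by $\Ad$ on $\mathfrak b$.

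\textbf{Step 2: matching with $(0,q)$-forms on $\Omega$.} The complex structure on $\Omega$ is defined by $\pi_*(\mathfrak b)$: at the point $g\T$, the antiholomorphic tangent space (the one annihilated by $\dbar$-closed functions) is $\dext\pi\,\dext L_g(\mathfrak b)$. This is well defined because $\Ad(\T)\mathfrak b=\mathfrak b$, and it is integrable because $\mathfrak p$ is a subalgebra. With this convention, $\mathfrak b\cong T^{0,1}_{e\T}\Omega$. Given a $(0,q)$-form $\omega$ on $\Omega$, set
\[
(S\omega)(Y_1,\dots,Y_q)(g)=\omega_{g\T}\bigl(\dext\pi\,\dext L_gY_1,\dots,\dext\pi\,\dext L_gY_q\bigr)
\]
and extend by zero on $\mathfrak t_\C$. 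The identity $\pi\circ L_{gs}=\pi\circ L_g\circ C_s$ on $G$ shows that $S\omega$ satisfies the equivariance of Step 1.

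Conversely, an equivariant $u$ descends to a well-defined $(0,q)$-form on $\Omega$. Smoothness follows by choosing local sections of $G\to G/\T$. So $S$ is a linear bijection in each degree, and it is natural (canonical).

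\textbf{Step 3: compatibility of differentials.} We must check $S\dbar=\dext S$. Pull $\omega$ back to $\pi^*\omega$ on $G$. Under $T_q$, the form $\pi^*\omega$ corresponds to the element of $C^q(\mathfrak g_\C,\mathscr C^\infty(G))$ that vanishes whenever an argument lies in $\mathfrak t_\C$ or $\overline{\mathfrak b}$; its restriction to $\mathfrak p$ is $S\omega$. Since $T_q$ intertwines the de Rham $\dext$ with the Chevalley--Eilenberg $\dext$, and $\dext\pi^*\omega=\pi^*\dext\omega$, restricting to $\Wedge^{q+1}\mathfrak p$ gives $\dext(S\omega)=S\bigl((\dext\omega)|_{T^{0,1}}\bigr)=S(\dbar\omega)$. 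The last equality holds because $\dbar\omega$ is the $(0,q+1)$-component of $\dext\omega$.

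We expect this step to be the main technical obstacle. The issue is bookkeeping: restricting a $\mathfrak g_\C$-cochain to the subalgebra $\mathfrak p$ commutes with $\dext$, since the bracket terms stay inside $\mathfrak p$. One also has to confirm that the $\mathfrak t_\C$-directions contribute nothing, and this uses exactly the basicness of $u$ (both $i_X u=0$ and $\mathcal L_X u=0$). Once this is done, $S$ is an isomorphism of complexes, and it induces $H^{0,q}_{\dbar}(\Omega)\cong H^q(\mathfrak p,\mathfrak t_\C;\mathscr C^\infty(G))$ for all $q\ge 0$.
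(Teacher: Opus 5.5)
Your proposal is correct and follows essentially the paper's route. The map $S$ is the pullback $\pi^*$, landing in the $\mathfrak t_\C$-basic cochains on $\mathfrak t_\C\oplus\mathfrak b$; its inverse comes from descending along $G\to G/\T$; and $S\dbar=\dext S$ follows from $\dext\pi^*=\pi^*\dext$ together with the fact that restriction to the subalgebra $\mathfrak t_\C\oplus\mathfrak b$ is a cochain map.

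There is one harmless slip in Step 1. With the paper's sign convention for $\mathcal L_H$, and with your own identity $\pi\circ L_{gs}=\pi\circ L_g\circ C_s$, the integrated invariance condition is $u(\Ad(s)^{-1}Y_1,\dots,\Ad(s)^{-1}Y_q)(gs)=u(Y_1,\dots,Y_q)(g)$. The inverse $\Ad(s)^{-1}$ appears there, not $\Ad(s)$ as you wrote.
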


    \begin{proof}

        Let $\mathfrak u = \mathfrak t_\C \oplus \mathfrak b$ and consider $(C^\bullet(\mathfrak u, \mathscr C^\infty(G)), \dext)$. Since the complex structure on $\Omega = G / \T$ is defined by $\pi_*(\mathfrak u) = \pi_*(\mathfrak b)$, any smooth $(0,q)$-form $u$ on $\Omega$ pulls back to an element $\pi^*(u) \in C^q(\mathfrak u, \mathscr  C^\infty(G))$. We also have, by functoriality, that $\dext$ satisfies $\pi^* \circ \dbar = \dext \circ \pi^*$, where $\dext$ is the exterior derivative \eqref{eq:algebraic_diff_op} associated to the algebra $\mathfrak u$.
    
         The form $\pi^*(u)$ satisfies $i_X(\pi^*(u)) = 0$ for all $X \in \mathfrak t_\C$ since $\pi_*(X) = 0$ for all such $X$. Note, moreover, that an element $ \alpha \in B^q(\mathfrak u, \mathfrak t_\C, \mathscr C^\infty(G))$ is invariant under the $\T$-action if and only if $\mathcal{L}_X \alpha =0$ for all $X \in \mathfrak t$. In particular, we have $\mathcal{L}_X (\pi^{\ast}(u)) = 0$ for all $X\in \mathfrak{t}_\C$.

        Hence, $\pi^*(u)$ is an element of $B^q(\mathfrak u, \mathfrak t_\C, \mathscr C^\infty(G))$ and thus we obtain a morphism of complexes $\pi^* \colon \mathscr C^\infty (\Omega, \underline \Lambda^q) \to B^q(\mathfrak u, \mathfrak t_\C, \mathscr C^\infty(G))^{\mathfrak t_\C}$. We can explicitly construct an inverse in the following way: let $\omega \in B^q(\mathfrak u,\mathfrak t_\C;\mathscr C^\infty(G))^{\mathfrak t_\C}$, considered as an element of $C^q(\mathfrak u;\mathscr C^\infty(G))$. We can extend $\omega$ (by zero) to an element of $C^q(\mathfrak g_\C; \mathscr C^\infty(G))$. Since $i_X\omega=0$ for all $X\in \mathfrak t_\C$, $\omega$ actually descends to an element of $C^q(\mathfrak g_\C / \mathfrak t_\C;\mathscr C^\infty(G))$\footnote{Even though $\mathfrak{g}_\mathbb{C} / \mathfrak{t}_\mathbb{C}$ is not a Lie algebra, we denote by $C^q(\mathfrak{g}_\mathbb{C} / \mathfrak{t}_\mathbb{C};\mathscr{C}^\infty(G))$ the set of alternating multilinear $q$-forms on $\mathfrak{g}_\mathbb{C} / \mathfrak{t}_\mathbb{C}$ with values in $\mathscr{C}^\infty(G)$.}, and using $\mathcal{L}_X\omega = 0$, we descend further to $C^q(\mathfrak g_\C/\mathfrak t_\C;\mathscr C^\infty(\Omega))$. Composing with the projection onto the quotient $C^q(\mathfrak g_\C/\mathfrak{t}_\C;\mathscr C^\infty(\Omega))/N^q_{\mathfrak u}(\mathfrak g_\C/\mathfrak t_\C;\mathscr C^\infty(\Omega)) = C^q_{\mathfrak u}(\mathfrak g_{\C}/\mathfrak t_\C,\mathscr C^\infty(\Omega))$ yields a map
        \[
            \rho:B^q(\mathfrak u, \mathfrak t_\C, \mathscr C^\infty(G))^{\mathfrak t_\C} \to C^q_{\mathfrak u}(\mathfrak g_{\C}/\mathfrak t_\C,\mathscr C^\infty(\Omega)).
        \]
        Composing with the canonical isomorphism onto $\mathscr C^\infty(\Omega;\underline \Lambda^q)$ (as discussed in Section \ref{sec:cohomology_induced_subalgebra}), we obtain a map $B^q(\mathfrak u,\mathfrak t_\C,\mathscr C^\infty(G))^{\mathfrak t_\C} \to \mathscr C^\infty(\Omega;\underline \Lambda^q)$ which is easily verified to be the inverse of $\pi^{\ast}$. Therefore, the complexes are isomorphic and so are their cohomologies. 
    \end{proof}
            
    \begin{cor}
    Let $G, \mathfrak b$ and $\mathfrak t_\C$ be as in Proposition \ref{prp:cohomology_coeff_C}. Then we have 
    \[
    H^q(\mathfrak t_\C \oplus \mathfrak b, \mathfrak t_\C; \mathscr C^\infty(G)) =
    \begin{cases}
        \C &\textup{if $q =0$}, \\
        \{0\} &\textup{if $q>0$}
    \end{cases}
    \]
    \end{cor}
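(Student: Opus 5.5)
By Proposition \ref{prp:cohomology_coeff_C}, it suffices to compute the Dolbeault cohomology $H^{0,q}_{\dbar}(\Omega)$ of $\Omega = G/\T$ with the complex structure induced by $\pi_*(\mathfrak b)$, where $\mathfrak b = \mathfrak b_{\mathfrak t}$ is the sum of the positive root spaces. The plan is to identify $\Omega$ with a generalized flag manifold and invoke the classical vanishing of the higher cohomology of its structure sheaf.

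\textbf{Step 1: $\Omega$ is a flag manifold.} Let $G_\C$ be the complexification of $G$; for non-semisimple $G$, first reduce to the semisimple part, as explained below. Let $B^-$ be the Borel subgroup whose Lie algebra is $\mathfrak t_\C \oplus \bigoplus_{\alpha\in\Delta_+}\mathfrak g_{-\alpha}$. The inclusion $G \hookrightarrow G_\C$ induces a diffeomorphism $G/\T \to G_\C/B^-$, since $G \cap B^- = \T$ and the dimensions agree (Iwasawa decomposition). I will check that under this map the antiholomorphic tangent space at the base point is spanned by the images of $\mathfrak b$, i.e.\ by the $\mathfrak g_\alpha$ with $\alpha>0$, because $\mathfrak g_\C/\mathrm{Lie}(B^-) \cong \bigoplus_{\alpha>0}\mathfrak g_\alpha$. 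Left invariance then propagates this to every point. If the sign convention comes out the other way, I replace $B^-$ by $B^+$; either choice gives a complex flag manifold.

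\textbf{Step 2: Vanishing.} $G_\C/B$ is a compact, simply connected, rational projective variety. By the Dolbeault isomorphism, $H^{0,q}_{\dbar}(\Omega) \cong H^q(\Omega,\mathcal O)$. By the Borel--Weil--Bott theorem for the trivial line bundle, or alternatively by Hodge theory, $H^q(\Omega,\mathcal O)$ is conjugate to $H^0(\Omega,\Omega^q)$, and $h^{q,0}=0$ for $q>0$ because all the cohomology of a flag manifold is of type $(p,p)$ (Bruhat cell decomposition). Since $\Omega$ is compact and connected, $H^0(\Omega,\mathcal O)=\C$.

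\textbf{Non-semisimple case.} Write $G = (Z\times G_{ss})/F$ with $F$ finite. Then $\T = (Z\times \T_{ss})/F$, so $G/\T = G_{ss}/\T_{ss}$ with the same root-space complex structure, and Steps 1 and 2 apply to $G_{ss}$.

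The main obstacle is Step 1: one must verify carefully that the complex structure defined by $\pi_*(\mathfrak b)$ is integrable and coincides with the complex structure of $G_\C/B^{\mp}$, rather than some exotic invariant structure. This reduces to a computation at the base point combined with $G$-equivariance of both structures, which should be standard.
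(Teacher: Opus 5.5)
Your proposal is correct. It shares the paper's first move, reducing via Proposition~\ref{prp:cohomology_coeff_C} to the Dolbeault cohomology of $\Omega=G/\T$, but it proves the vanishing of $H^{0,q}_{\dbar}(\Omega)$ for $q>0$ by a different route.

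The paper only cites Besse for two facts. First, $\Omega$ with this invariant complex structure is a homogeneous Kähler manifold with positive first Chern class (Chapter~8). Second, such manifolds have $H^{0,q}_{\dbar}=0$ for $q>0$ (Corollary~11.25).

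You instead identify $\Omega$ with the flag variety $G_\C/B$ by comparing two $G$-invariant almost complex structures at the base point. You then get the vanishing either from Borel--Weil--Bott for the trivial bundle, or from Hodge symmetry $h^{0,q}=h^{q,0}$ together with the Bruhat decomposition, which makes all cohomology of type $(p,p)$.

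Your route is more self-contained, and it makes explicit two things the paper leaves implicit. The first is that $\pi_*(\mathfrak b)$ really is the standard flag-manifold structure, which one needs anyway to invoke Besse's Chapter~8. The second is the reduction of the non-semisimple case to $G_{ss}/\T_{ss}$; here $G_{ss}\cap\T=\T_{ss}$ holds because a maximal torus is its own centralizer. The paper's route is shorter but rests on heavier input: positivity of $c_1$ plus a Kodaira/Bochner-type vanishing theorem.

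One correction to Step~1. Take $P\subset G_\C$ a Borel subgroup containing the complexified torus, so that $G\cap P=\T$. Then $\mathfrak g_\C/\mathrm{Lie}(P)$ is the \emph{holomorphic} tangent space at the base point. The antiholomorphic space, inside $\mathfrak g_\C/\mathfrak t_\C$, is the kernel of the complex-linear extension of $\mathfrak g/\mathfrak t\to\mathfrak g_\C/\mathrm{Lie}(P)$, namely $\mathrm{Lie}(P)/\mathfrak t_\C$. So with $B^-$ the $(0,1)$-directions are the negative root spaces. The correct model for $\pi_*(\mathfrak b)$ is therefore $G_\C/B^+$ with $\mathrm{Lie}(B^+)=\mathfrak t_\C\oplus\mathfrak b$. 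This is exactly the fallback you anticipated, and it does not affect the conclusion.
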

    \begin{proof}
         Let $\Omega = G/\T$ be equipped with the complex structure induced by $\pi_*(\mathfrak b)$. Then, following the results of \cite[Chapter 8]{besse_einstein_2008}, the manifold $\Omega$ has positive first Chern class. Thus, by \cite[Corollary~11.25]{besse_einstein_2008} we get
            \[
                H^{0,q}_{\dbar}(\Omega) = 
            \begin{cases}
                \C & \text{if } q = 0 \\
                \{0\}   & \text{if } q > 0.
            \end{cases}\]
    Hence, Proposition \ref{prp:cohomology_coeff_C} implies that $H^q(\mathfrak t_\C \oplus \mathfrak b, \mathfrak t_\C; \mathscr C^\infty(G))$ vanishes for $q>0$.
    \end{proof}
   
    \section{Fourier transform}
    \label{sec:fourier_transform}

    In this section, we briefly recall facts about left-invariant vector fields and Fourier analysis on compact Lie groups. For an introduction to Fourier analysis on compact Lie groups, we refer to \cite{ruzhansky_pseudo-differential_2010}. We also introduce some notation and results from \cite[Section~2]{doi_first_1987}, which we use in the proof of Theorem \ref{thm:main}.

    We identify $\mathfrak g$ with the set of left-invariant vector fields defined on $G$ by
    \[
        X f(x) \doteq \frac{\mathrm{d}}{\mathrm{d} t}\Big|_{t=0} f(x \exp(tX)),
    \]
    where $\exp$ is the exponential map from $\mathfrak{g}$ to $G$. Since $G$ is compact, we can assume that it is equipped with an $\Ad$-invariant Riemannian metric on $G$, which we denote by $\langle \cdot, \cdot \rangle$.

    We now want to set up the conditions to apply Theorem \ref{thm:hsc_thm_13} to the cohomology space $H^n(\mathfrak v, \mathscr C^\infty(G))$. To this purpose, we need to choose a family of finite-dimensional subspaces of the space $\mathscr C^\infty(G)$, which we obtain with the Fourier decomposition on $G$. We adopt the notation from \cite{doi_first_1987} for convenience, but also refer to \cite[Chapter 10]{ruzhansky_pseudo-differential_2010} for a detailed treatment with more modern notation. Note that the Riemannian metric $\langle \cdot , \cdot \rangle$ determines a unique bi-invariant Haar measure on $G$. We use the Haar measure to define the Laplace-Beltrami operator $\Delta_G$.

    Let $\widehat{G}$ denote the set of equivalence classes of irreducible, strongly continuous representations $\xi \colon G \to \End(H_\xi)$. Since $G$ is compact, each representation $\xi \in \widehat G$ acts on a finite-dimensional complex space $H_\xi$. The Fourier transform of a function $u \in \mathscr C^\infty(G)$ is defined by
    \[
        \widehat u(\xi)  = \int_G u(x) \xi(x)^\ast \mathrm{d}_G(x) \footnote{Here $\xi(x)^*$ denotes the conjugate transpose of $\xi(x)$.}, \qquad [\xi] \in \widehat{G}, 
    \] 
    where $\mathrm{d}_G$ is the Haar measure on $G$. The Fourier coefficient $\widehat u(\xi) $ defines an element in $\operatorname{End}H_\xi$.
    
    A smooth function $u$ can be recovered from its Fourier transform via
    \[
    u(x) = \sum_{[\xi] \in \widehat{G}} d_\xi \Tr\big(\widehat u(\xi)  \xi\big),
    \]
    where $d_\xi$ is the dimension of $H_\xi$ . The series converges in the $\mathscr C^\infty$-topology.
    
    The Fourier transform extends to Schwartz distributions on $G$, as well as to any of its subspaces, such as $L^2(G)$. For example, if $u \in L^2(G)$, the Fourier series converges to $u$ in the $L^2$-topology. Moreover, if $u \in L^2(G)$, then 
    \[
        \|u\|_{L^2}^2 = \sum_{[\xi] \in \widehat{G}}d_\xi \Tr\big(\widehat u(\xi) \widehat u(\xi)^\ast\big) = \sum_{[\xi] \in \widehat{G}}d_\xi \| \widehat u(\xi) \|_{\HS}^2.
    \]
    The norm $\| \cdot \|_{\HS}$ denotes the Hilbert--Schmidt norm on matrices. Further details can be found in \cite{ruzhansky_pseudo-differential_2010}.
    
    Since $G$ is a compact Lie group, each class $[\xi] \in \widehat{G}$ can be realized as a unitary representation of dimension $d_\xi$, with matrix elements $\xi_{i,j}$, $1 \leq i, j \leq d_\xi$. Because the metric $\langle \cdot, \cdot \rangle$ is $\Ad$-invariant, the operator $\Delta_G$ satisfies
    \[
        \Delta_G \xi_{i,j} = \Lambda_\xi \xi_{i,j}, \qquad i,j = 1, \dots, d_\xi,
    \]
    for some eigenvalue $\Lambda_{\xi} \in \R$ that depends only on the class $[\xi]$. We write $\langle \xi \rangle = (1+\Lambda_{[\xi]})^{1/2}$.

    For each class $[\xi] \in \widehat G$, let $H_{\xi} = \operatorname{span}_\C \{ \xi_{i,j} \, | \, 0 \leq i, j, \leq d_\xi \}.$ We can write
    \[ \begin{aligned}
        \mathscr C^\infty(G) & \cong \widehat{ \displaystyle \bigoplus_{[\xi] \in \widehat G} } \End(H_\xi) \\
        & \doteq \left\{(\widehat u(\xi))_{[\xi]\in \widehat{G}} \in \prod_{[\xi] \in \widehat G}\End(H_\xi) \, \middle| \,\sum_{[\xi] \in \widehat G} d_\xi \Tr(\widehat u(\xi) \xi) \text{ converges in }\mathscr C^\infty(G) \right\},
    \end{aligned}
    \]
    where the right side is understood as the set of sequences in $[\xi] \in \widehat G$ for which the corresponding Fourier series converges in the $\mathscr C^\infty$-topology. That is, we only consider sequences $\{\widehat a(\xi)\}_{[\xi] \in \widehat G}$, with $\widehat a(\xi) \in \End(H_\xi)$, such that
    \begin{equation}
    \label{eq:smooth_ineq_fourier}
        \sum_{[\xi] \in \widehat G} d_\xi \langle \xi \rangle^{k} \|\widehat a(\xi)\|_{\HS} < \infty,\,\,\,k\in \Z_+.
    \end{equation}

    Since $\Delta_G$ commutes with left-invariant differential operators we have that $XH_{\xi} \subset H_{\xi}$ for all $X \in \mathfrak g_\C$. Therefore, the symbol of a left-invariant vector field, which we denote by $\widehat L$, satisfies $\widehat L (\xi) \in \End(H_\xi)$ for all $[\xi] \in \widehat G$. Thus $\End(H_\xi)$ is a $\mathfrak g_\C$-module for each $[\xi]$. They are also $\mathfrak v$-modules for any subalgebra $\mathfrak v \subset \mathfrak g_\C$.

    For each $q =1, 2, \ldots, n$, we identify $\mathscr C^\infty(G, \underline \Lambda^{q})$ with the tensor product of vector spaces $\mathscr C^\infty(G) \otimes \underline \Lambda^{q}$. Since $\underline \Lambda^{q}$ is finite-dimensional, the universal property of the tensor product gives us a unique extension of the Fourier transform from $\mathscr C^\infty(G)$ to $\mathscr C^\infty(G, \underline \Lambda^{q})$, which is independent of choice of basis for $\underline \Lambda^{q}$.

    Therefore, for each $[\xi] \in \widehat G$, if $[u] \in H^q(\mathfrak v, \mathscr C^\infty(G)) \cong H^q(G, \mathfrak v)$ (see Proposition~\ref{prp:iso_dbar_algebra}) and $v \in [u]$, then there exists an element $w \in \mathscr C^\infty(G, \underline \Lambda^{q-1})$ such that
    \[
        u - v = \dbarb w.
    \]
    It follows that the Fourier coefficients satisfy
    \[
        \widehat u(\xi) - \widehat v(\xi) = \widehat{\dbar}_b(\xi) \widehat w(\xi),
    \]
    where $\widehat \dbar_b(\xi)\colon \End(H_\xi)\otimes \underline \Lambda^q \to \End(H_\xi)\otimes \underline \Lambda^{q+1}$ denotes the induced operator on each Fourier level. 
    
    The map $[u] \mapsto [\widehat u(\xi)]$ defines a decomposition
    \begin{align}\label{eq:iso_fourier_dec}
        \begin{split}
        H^q(\mathfrak v & ,  \mathscr C^\infty(G))    \cong \widehat{ \displaystyle \bigoplus_{[\xi] \in \widehat G} } H^q(\mathfrak v, \End(H_\xi))  \\
        &\doteq\left\{([ \widehat u(\xi)]) \in \prod_{[\xi]\in \widehat G}H^q(\mathfrak v, \End(H_\xi))\, \middle| \, \sum_{[\xi]\in \widehat{G}} d_\xi \Tr(\widehat u(\xi) \xi ) \text{ converges}
        \right\},
        \end{split}
    \end{align}
    where the hat over the direct sum indicates that on the right side, we have sequences converging in the $\mathscr C^\infty$-topology induced in $H^q(\mathfrak v, \mathscr C^\infty(G))$. We extend this decomposition to functions in $\mathscr C^\infty(\Omega)$, by identifying them with the subspace of $\T$-periodic functions $\mathscr C^\infty(G)^\T \subset \mathscr C^\infty(G)$.

    \begin{prp} \label{prp:really_important_prp}
        With $G$, $\mathfrak t_\C$, and $\mathfrak b$ as in Proposition \ref{prp:cohomology_coeff_C}, for $q = 0,1,\ldots,l$ we have
        \[
            H^q(\mathfrak t_\C \oplus \mathfrak b, \mathfrak t_\C; \operatorname{End}(H_\xi)) = 
            \begin{cases}
                \C & \text{if } q = 0 \text{ and } \xi \text{ trivial,} \\
                \{0\}   & \text{ otherwise. }
            \end{cases}\]
    \end{prp}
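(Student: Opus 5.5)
The plan is to deduce the statement from the preceding Corollary (vanishing of $H^q(\mathfrak t_\C \oplus \mathfrak b, \mathfrak t_\C; \mathscr C^\infty(G))$ for $q>0$, and $\C$ for $q=0$) by showing that the relative complex splits along the Fourier decomposition. Write $\mathfrak u = \mathfrak t_\C \oplus \mathfrak b$. Since each $H_\xi$ is stable under all left-invariant vector fields, the $\mathfrak u$-module $\mathscr C^\infty(G)$ contains each $\End(H_\xi)$ as a $\mathfrak u$-submodule. The Fourier projection $P_\xi\colon u \mapsto d_\xi \Tr(\widehat u(\xi)\xi)$ is given by convolution with the character on the appropriate side. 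It therefore commutes with the action of left-invariant vector fields, and it is a $\mathfrak u$-module map onto $\End(H_\xi)$.

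First, apply $P_\xi$ coefficientwise to cochains. This gives a cochain map $C^q(\mathfrak u, \mathscr C^\infty(G)) \to C^q(\mathfrak u, \End(H_\xi))$. It preserves the conditions $i_X\omega=0$ for $X\in\mathfrak t_\C$, because it only acts on values. It also preserves $\mathcal L_X\omega = 0$ for $X \in \mathfrak t_\C$, because $P_\xi$ commutes with the $\mathfrak u$-action appearing in \eqref{eq:lie_derivative}. Hence $C^\bullet(\mathfrak u,\mathfrak t_\C;\End(H_\xi))$ is a direct summand subcomplex of $C^\bullet(\mathfrak u,\mathfrak t_\C;\mathscr C^\infty(G))$, with $P_\xi$ a retraction commuting with $\dext$. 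Consequently $H^q(\mathfrak u,\mathfrak t_\C;\End(H_\xi))$ injects into $H^q(\mathfrak u,\mathfrak t_\C;\mathscr C^\infty(G))$. Indeed, if $\omega=\dext\eta$ with $\eta$ smooth-valued, then $\omega = P_\xi\omega = \dext(P_\xi \eta)$.

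By the Corollary, the injection gives $H^q(\mathfrak u,\mathfrak t_\C;\End(H_\xi))=0$ for $q>0$ and every $\xi$. For $q=0$, the space $H^0$ consists of the functions in $\End(H_\xi)$ annihilated by $\mathfrak u$. Such a function is also annihilated by $\mathfrak t_\C$, so it is $\T$-invariant and descends to a holomorphic function on $G/\T$. By the $q=0$ case of the Corollary, which says $H^{0,0}_{\dbar}(\Omega)=\C$ by compactness, it is constant. A nonzero constant lies in $\End(H_\xi)$ only when $\xi$ is trivial, since constants span exactly the trivial isotypic component. This yields $\C$ for trivial $\xi$ and $\{0\}$ otherwise.

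The main point needing care is the claim that $P_\xi$ commutes with the left-invariant vector fields and preserves $\mathfrak t_\C$-invariance. This follows from the fact that left-invariant vector fields are generated by right translations, while the isotypic projection onto matrix coefficients of $\xi$ commutes with both left and right translations. The range $q\le l$ is immaterial, because the argument works for all $q$.
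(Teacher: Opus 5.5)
Your proposal is correct and follows essentially the paper's route: decompose the coefficient module $\mathscr C^\infty(G)$ by Fourier type and read off each isotypic piece from the Corollary on $G/\T$ ($\C$ in degree $0$, zero above). Your retraction argument via the projection $P_\xi$, which commutes with left-invariant vector fields, gives a cleaner justification of the paper's asserted $\widehat{\bigoplus}$ decomposition of the cohomology, since only the injection $H^q(\mathfrak u,\mathfrak t_\C;\End(H_\xi))\hookrightarrow H^q(\mathfrak u,\mathfrak t_\C;\mathscr C^\infty(G))$ is actually needed.
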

    \begin{proof}
        Using the Fourier transform, we can write a decomposition of the form
        \[ 
             H^q(\mathfrak t_\C \oplus \mathfrak b, \mathfrak t_\C; \mathscr C^\infty(G)) \cong  \widehat{ \displaystyle{ \bigoplus_{[\xi] \in \widehat G}} }H^q(\mathfrak t_\C \oplus \mathfrak b, \mathfrak t_\C; \operatorname{End}(H_\xi)). 
             \]
            By Proposition \ref{prp:cohomology_coeff_C},
            the left-hand-side is $\C$ when $q=0$ and it vanishes otherwise. It follows that $H^q(\mathfrak t_\C \oplus \mathfrak b, \mathfrak t_\C; \operatorname{End}(H_\xi)) = 0$ for $q > 0$ and all $\xi \in \widehat G$. Furthermore, since $H^0(\mathfrak t_\C \oplus \mathfrak b, \mathfrak t_\C; \operatorname{End}(H_0)) = \C$, we also have that $H^0(\mathfrak t_\C \oplus \mathfrak b, \mathfrak t_\C; \operatorname{End}(H_\xi)) = \{0\}$ for all non-trivial $\xi \in \widehat G.$
    \end{proof}

    \section{Semilocal structure}
    \label{sec:semilocal}

    Let $\mathfrak v$ be a left-invariant CR structure on $G$ with toral part $\mathfrak m$ and associated maximal torus $\T$. The goal of this section is to prove the following proposition, which will be used in Section~\ref{sec:the_proof} to show that certain cohomology spaces vanish in the Hochschild--Serre decomposition.
     \begin{prp}
    \label{prp:an_inclusion}
        If $\mathfrak m$ satisfies \DC and $\xi \in \widehat G$, then \[
            H^q(\mathfrak m \oplus \mathfrak b, \mathfrak m; \End(H_\xi)) \subset H^q(\mathfrak t_\C \oplus \mathfrak b, \mathfrak t_\C; \End(H_\xi)).
        \]
    \end{prp}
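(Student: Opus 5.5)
The plan is to show that, under \DC, the two relative cochain complexes are literally the same subcomplex of $C^\bullet(\mathfrak b, \End(H_\xi))$. The inclusion of cohomologies is then an equality, and in particular an inclusion.

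\textbf{Step 1: reduce to the $\CRO$ case and identify the cochains.} By Theorem~\ref{thm:nodc_nocr1}, \DC excludes $\CRI$. So by Theorem~\ref{thm:classification_theorem} we may take $\mathfrak v=\mathfrak m\oplus\mathfrak b$ with $\mathfrak b=\mathfrak b_{\mathfrak t}$, which is $\operatorname{ad}(\mathfrak t_\C)$-stable.

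Write $M=\End(H_\xi)$. An element of $B^q(\mathfrak m\oplus\mathfrak b,\mathfrak m;M)$ is annihilated by $i_X$ for all $X\in\mathfrak m$. Restricting to $\Wedge^q\mathfrak b$ therefore identifies it with an element of $C^q(\mathfrak b,M)$. Likewise, $B^q(\mathfrak t_\C\oplus\mathfrak b,\mathfrak t_\C;M)\cong C^q(\mathfrak b,M)$.

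Under these identifications, the $\mathfrak m$-action and the $\mathfrak t_\C$-action are both given by the formula \eqref{eq:lie_derivative}, since $\mathfrak b$ is an ideal in both algebras. Hence
\[
C^q(\mathfrak m\oplus\mathfrak b,\mathfrak m;M)=C^q(\mathfrak b,M)^{\mathfrak m},\qquad C^q(\mathfrak t_\C\oplus\mathfrak b,\mathfrak t_\C;M)=C^q(\mathfrak b,M)^{\mathfrak t_\C}.
\]
Evaluating \eqref{eq:algebraic_diff_op} on arguments from $\mathfrak b$, with cochains that vanish when any argument lies in the toral part, shows that both differentials coincide with the $\mathfrak b$-differential. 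So it suffices to prove $C^q(\mathfrak b,M)^{\mathfrak m}=C^q(\mathfrak b,M)^{\mathfrak t_\C}$. The inclusion $\supseteq$ is trivial.

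\textbf{Step 2: weight decomposition.} The $\mathfrak t$-action on $C^q(\mathfrak b,M)\cong\Wedge^q\mathfrak b^*\otimes M$ integrates to an action of $\T$. On $\mathfrak b^*$ it is the coadjoint action. On $M$ it comes from right translation, since left-invariant vector fields preserve $H_\xi$ and are differentials of translations of matrix coefficients.

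A finite-dimensional representation of a torus is a direct sum of characters. Hence $C^q(\mathfrak b,M)=\bigoplus_{\mu}C_\mu$, where each $\mu$ is the differential of a character of $\T=\R^d/\Z^d$. In coordinates, $X\in\mathfrak t_\C$ acts on $C_\mu$ by the scalar $\widehat X(\eta)$ for some $\eta\in 2\pi\Z^d$; this is the symbol from Section~\ref{sec:division_condition}, up to the harmless normalization factor $2\pi$.

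An element is $\mathfrak m$-invariant if and only if each of its components lies in a weight space $C_\mu$ with $\widehat{L_j}(\eta)=0$ for all $j$.

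\textbf{Step 3: apply \DC.} By \DC, $\max_j|\widehat{L_j}(\eta)|>0$ for every $\eta\neq0$. So the only weight killed by $\mathfrak m$ is $\eta=0$, which is exactly the weight killed by all of $\mathfrak t_\C$. Therefore $C^q(\mathfrak b,M)^{\mathfrak m}=C_0=C^q(\mathfrak b,M)^{\mathfrak t_\C}$. The complexes agree, and the claimed inclusion follows, in fact as an equality.

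\textbf{Expected obstacle.} The delicate point is Step 2: justifying that the weights of the combined action (coadjoint on $\Wedge^q\mathfrak b^*$ tensored with the right-regular action on $M$) lie in the integral lattice used in the definition of \DC. The argument needs both that roots are differentials of characters of $\T$ and that $\T$ acts on $H_\xi$ through a genuine group representation. I would make this explicit by choosing the standard coordinates on $\T$ so that $\T=\R^d/\Z^d$. Then $\Hom(\T,S^1)\cong\Z^d$, and the infinitesimal characters are exactly $X\mapsto 2\pi\widehat X(\eta)$ with $\eta\in\Z^d$.
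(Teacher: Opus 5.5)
Your proposal is correct, and it takes a genuinely different route from the paper's. The two proofs reach the same cochain-level conclusion: an $\mathfrak m$-invariant relative cochain is automatically $\mathfrak t_\C$-invariant, so the two relative complexes coincide and the cohomologies are in fact equal.

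The paper argues semilocally. It turns the cochain into the smooth form $u=\Tr(\widehat u(\xi)\xi)$ on $G$ and writes it over a tube $\pi^{-1}(U)\cong U\times\T$ in the $\T$-invariant coframe $\dext\overline Z_I$. It then applies Lemma~\ref{lem:invariance_functions} (Fourier series in the torus variable together with \DC) to show that the coefficients $u_I$ are constant along the fibres $\pi^{-1}(z)$.

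Underneath, the mechanism is the same as yours. One decomposes along characters of $\T$ and uses only the qualitative part of \DC: no nonzero $\eta$ is annihilated by all the $\widehat{L_j}$. Hence only the trivial character survives $\mathfrak m$-invariance. The difference is where you apply this. You work directly on the finite-dimensional $\T$-module $\Wedge^q\mathfrak b^*\otimes\End(H_\xi)$, whose weights are integral because $\T$ acts by the coadjoint action and by right translation on matrix coefficients.

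Your route gives a short, purely algebraic proof. It avoids the tubular-neighbourhood construction, the local frames, and the somewhat informal passage between relative cochains and local forms $\sum' u_I\,\dext\overline Z_I$. It also makes explicit that both relative complexes are subcomplexes of $C^\bullet(\mathfrak b,\End(H_\xi))$ with the same differential, which the paper leaves implicit.

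The paper's route has its own advantages. Lemma~\ref{lem:invariance_functions} holds for arbitrary smooth functions on $\pi^{-1}(U)$, not just for a single isotypic component. It also fits the geometric picture of the fibration $G\to G/\T$ that the paper emphasizes.
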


    The proof of Proposition \ref{prp:an_inclusion} relies on a semilocal analysis of the CR structure in a tubular neighborhood of the maximal torus $\T$.

    Let $\Omega = G / \T$ and observe that the pushforward $\pi_\ast (\mathfrak v)$ defines a complex structure on $\Omega$. By \cite[Corollary 10.1.11]{hilgert_structure_2012}, there exists a covering $\mathfrak U$ of $\Omega$ by open sets such that for each $U \in \mathfrak U$, there is a smooth section $\sigma = \sigma_U$ of $\pi \colon G \to \Omega$ such that the map
    \begin{equation}
    \label{eq:hilgert_diffeo}
        \Psi\colon U \times \T \to  \sigma(U) \T, \qquad (z, t) \mapsto \sigma(z)t
    \end{equation} is a diffeomorphism onto an open subset $V = \sigma(U) \T$ of $G$. We can further shrink $U$ such that it has complex coordinates $(z_1, \ldots, z_l)$. For each $U \in \mathfrak U$, let $V = \pi^{-1}(U)$. We define functions $\overline{Z}_j(g) \doteq \overline z_j \circ \pi(g)$ for $j = 1, \ldots, l = (N - d)/2$.

    Let $m\doteq \dim \mathfrak m$, let $\overline{T}_1, \ldots, \overline{T}_m$ be a basis for $\mathfrak m$. Write $\overline{T}_j=\frac{1}{2}(X_j+iY_j)$, where $X_j,Y_j$ are real. Since $\mathfrak m \cap \overline{\mathfrak m}=\{0\}$, the vector fields $\{X_j,Y_k\}$ are linearly independent. Let $\{\eta_1,\ldots,\eta_m,\rho_1,\ldots,\rho_m\}$ be the dual basis, which we can extend as real $1$-forms in $G$. Then, if we set $\tau_j \doteq \eta_j + i\rho_j$, we have $\tau_j(T_k)=\delta_{jk}$ and $\tau_j(\overline{T}_k)=0$ for all $j,k=1,\ldots,m$. Let $\xi$ be any non-zero real 1-form in $\Ann \mathfrak v \cap \Ann \overline{\mathfrak v} \subset \mathfrak g_\C$ (we take it to be the dual of the missing direction $X$). Then, $\Ann \mathfrak v|_V$ is spanned by $$\tau_1, \ldots, \tau_m, \xi, \dext Z_1, \ldots, \dext Z_{l}.$$ 
    Moreover, the entire $T_\C^* V$ is spanned by
    $$\tau_1, \ldots, \tau_m, \overline \tau_1, \ldots, \overline \tau_m, \xi, \dext Z_1, \ldots, \dext Z_{l},  \dext \overline Z_1, \ldots, \dext \overline Z_{l}.$$ 

    Using the (real) diffeomorphism $\Psi$, we pull these forms back to $U \times \T$. This yields the following forms:
    \[\tau_j' = \Psi^*(\tau_j), \quad \overline \tau_j' = \Psi^*(\overline \tau_j), \qquad \xi' = \Psi^* \xi, \quad Z_k' = Z_k \circ \Psi,  \quad \overline Z_k' = \overline Z_k, \quad X'=\Psi^\ast{X}\] for $j = 1, \ldots, m$ and $k = 1, \ldots, l$.

    For all $j,j' = 1, \ldots, m$ and $k,k' = 1, \ldots, l$, the following hold:
    \[ \tau'_j(0|_z \oplus \overline T_{j'}|_t) =  \tau_j(\Psi_*( 0|_z \oplus \overline T_{j'}|_t)) =  \tau_j(\overline T_{j'}|_{\sigma(z)t}) = 0,\]
    \[ \xi'(0|_z \oplus \overline T_{j}|_t) =  \xi(\Psi_*( 0|_z \oplus \overline T_j|_t)) =  \xi(\overline T_j|_{\sigma(z)t}) = 0,\]    
    
    \[ \dext Z'_k(0|_z \oplus \overline T_j|_t) = \dext Z_k(\Psi_*( 0_z \oplus \overline T_j|_t)) = \dext \overline Z_k(T_j|_{\sigma(z)t}) = 0,\]
    and
    \[ \begin{aligned}
        \dext Z'_k(\del/\del \overline z_{k'} |_z \oplus 0_t) & = \dext Z_k ( \Psi_* ( \del/\del \overline z_{k'} |_z \oplus 0_t)) \\
        & = \dext Z_k(R_t)_*(\sigma_*(\del/\del \overline z_{k'}|_z )) \\
        & = \dext z_k (\pi \circ R_t \circ \sigma )_*(\del / \del \overline z_{k'}) \\
        & = \dext z_k (\del / \del \overline z_{k'}) = 0.
    \end{aligned} \]

    We define $$f_{jk} \doteq \tau'_j(\del / \del \overline z_k \oplus 0) = \tau_j(\Psi_*(\del / \del \overline z_k \oplus 0)) \in \mathscr C^\infty(U \times \T), $$
    
    $$ h_{k} \doteq \xi'(\del / \del \overline z_k \oplus 0) = \xi(\Psi_*(\del / \del \overline z_k \oplus 0)) \in \mathscr C^\infty(U \times \T), $$
    $$ \overline {T_j'} \doteq 0 \oplus \overline T_j$$ for $j = 1, \ldots, m$, $k=1,\ldots,l$ and $$\overline L_{k} \doteq \del / \del \overline z_k \oplus 0 - \sum_{j = 1}^m 
    f_{jk} (0 \oplus T_j) 
    - h_k X',\,\,k=1,\ldots,l.$$ For readability, we will omit the symbol $\oplus$ in the following.

    We conclude that the CR structure on $U \times \T$ given by
    \[
        \mathcal W_{(z,t)} = \operatorname{span}_\C \{ \overline{T'_j}, \overline{L}_k|_{(z,t)} \, |\, j = 1, \ldots, m, ~k =1, \ldots, l\}
    \]
    and $\mathcal W = \bigcup_{(z,t) \in U \times \T} \mathcal W_{(z,t)}$ makes the map $\Psi$ a CR map. Moreover, the functions $f_{jk}$  and $h_{k}$ are constant over $\T$.

    Now we have that
    \begin{equation} \label{eq:local_frame}
        T_1', \ldots, T_m', \overline {T_1'}, \ldots, \overline {T_m'}, X', L_1, \ldots, L_l, \overline L_1, \ldots, \overline L_l
    \end{equation}
    form a basis for $T_\C (U \times \T)$. Now we need a good representation for $\dbarb$ on $U \times \T$, for which it suffices to find a convenient dual basis for \eqref{eq:local_frame}. 

    We define $\overline w_j = \overline \tau_j - \sum_{k=1}^l f_{jk}\dext Z_k$ and observe that
    $\overline w_j (T_k) = \overline w_j (L_k) = \overline w_j (\overline L_k) = 0$ and 
    $\overline w_j (\overline T_k) = \delta_{jk}.$ It is easy to verify that the following forms are dual to the vector fields \eqref{eq:local_frame}:
    \[\tau_1', \ldots, \tau_m', \overline \omega_1, \ldots, \overline \omega_m, \xi, \dext Z_1, \ldots, \dext Z_l, \dext \overline Z_1, \ldots, \dext \overline Z_l. \]

    \begin{lem}
    \label{lem:invariance_functions}
        If $U \subset G/\T$ is an open neighborhood with complex coordinates $z=(z_1, \ldots, z_l)$ and $f \in \mathscr C ^\infty(\pi^{-1}(U))$ is such that $\dbarb f = 0$ over $\pi^{-1}(U)$, then $f$ is constant along $\pi^{-1}(z)$ for all $z \in U$, and thus descends to a holomorphic function on $U$. 
    \end{lem}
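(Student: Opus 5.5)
Pull $f$ back along the diffeomorphism $\Psi$ and work on $U\times\T$ with the structure $\mathcal W$, in which $\Psi$ is a CR map; then $f'=f\circ\Psi$ is annihilated by every section of $\mathcal W$. In particular $\overline{T'_j}f'=0$ for $j=1,\dots,m$. Since $\overline{T'_j}=0\oplus\overline T_j$ acts only in the torus variable, for each fixed $z\in U$ the function $t\mapsto f'(z,t)$ is a smooth CR function on $(\T,\mathfrak m)$. The first step is to show that such functions are constant under \DC. Expanding in Fourier series on $\T$, $f'(z,\cdot)=\sum_{\eta\in\Z^d}c_\eta(z)e^{2\pi i\langle\eta,t\rangle}$, the equations $\overline T_j f'=0$ give $\widehat{\overline T_j}(\eta)c_\eta(z)=0$ for all $j$. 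By \DC, for every $\eta\neq0$ some $\widehat{\overline T_j}(\eta)\neq0$ (we only need non-vanishing here, not the quantitative bound), so $c_\eta(z)=0$ for $\eta\neq 0$. Hence $f'(z,t)=c_0(z)$ is independent of $t$, i.e.\ $f$ is constant on each fiber $\pi^{-1}(z)=\sigma(z)\T$. This uses the same fact invoked in the proof of Theorem~\ref{thm:nodc_nocr1}; the symbols of $\mathfrak m$ and $\overline{\mathfrak m}$ are related by $\widehat{\overline{L}}(\eta)=-\overline{\widehat L(-\eta)}$, so it does not matter which of the two bases we use to check \DC.

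Next I would write $f=g\circ\pi$ for a smooth $g$ on $U$; smoothness holds because $g=f\circ\sigma$. It remains to show $g$ is holomorphic, using the vector fields $\overline L_k=\partial/\partial\overline z_k-\sum_j f_{jk}T'_j-h_kX'$, which lie in $\mathcal W$. Since $f'$ does not depend on $t$ and $T'_j$, $X'$ are vertical, we need $T'_jf'=0$ and $X'f'=0$. For $T'_j=0\oplus T_j$ this is immediate, as $T_j\in\mathfrak t_\C$ is tangent to the torus factor. For $X'=\Psi^*X$ we must check that $X$, the missing real direction, can be taken in $\mathfrak t$. For a $\CRO$ structure $\mathfrak v\oplus\overline{\mathfrak v}=(\mathfrak m\oplus\overline{\mathfrak m})\oplus\bigoplus_{\alpha\neq0}\mathfrak g_\alpha$, so the complementary real direction is $X\in\mathfrak t$. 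Being left-invariant and in $\mathfrak t$, $X$ is tangent to the fibers $\sigma(z)\T$, since the right $\T$-action is generated by left-invariant fields from $\mathfrak t$, and so $\Psi^*X$ is vertical. Then $0=\overline L_kf'=\partial g/\partial\overline z_k$, so $g$ is holomorphic on $U$. By Theorem~\ref{thm:nodc_nocr1} the \DC hypothesis excludes $\CRI$, which is why we may assume $\CRO$.

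The main obstacle is the verticality bookkeeping in this second step: $\overline L_k$ was built with the coefficients $f_{jk},h_k$ correcting $\partial/\partial\overline z_k$ precisely so that it lies in $\mathcal W$. Once fiber-constancy is known, only the horizontal part survives, and the argument reduces to making sure every correction term is tangent to the torus factor.
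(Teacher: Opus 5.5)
Your proof is correct and follows the paper's route: pull back by $\Psi$, expand in Fourier series along $\T$, and use the non-vanishing part of \DC to kill every nonzero frequency. Your second step (choosing the missing direction $X\in\mathfrak t$ in the $\CRO$ case so that the correction terms in $\overline L_k\in\mathcal W$ are vertical, giving $\partial g/\partial\overline z_k=0$) fills in the holomorphicity claim that the paper only asserts. The symbol identity should read $\widehat{\overline L}(\eta)=\overline{\widehat L(-\eta)}$, without the minus sign; this is harmless, and since the $\overline T_j$ already form a basis of $\mathfrak m$ the remark is not needed.
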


    \begin{proof}
        Notice that, since $\overline \tau_j$ and $\dext \overline Z_k$ are $\dbarb$-closed, and by writing $f' = f \circ \Psi^{-1} $, we have
        $$ 0 = \dbarb f' = \sum_{j=1}^m \left(\overline{T}_j f'\right) \overline{\omega}_j + \sum_{k=1}^{l} \left(\overline{L}_k f'\right) \dext \overline{Z}_k $$
        and so
        $ 0 = \dbarb f' = \sum_{j=1}^m \left(\overline{T}_j f' \right) \overline \omega_j$. Now we take the Fourier transform with respect to the variable $t$. In the following, we denote by $\dbarb'$ the tangential Cauchy--Riemann operator with respect to the structure defined by $\mathfrak m \oplus \{0\}$, and we have
        $$ \begin{aligned}
            \widehat{\dbarb' f'}(\xi, z) 
            & = \int_{\mathbb T}  \dbarb'(f')(t,z) e^{-i \xi t } \dint \mu(t) \\
            & = \sum_{j=1}^m \widehat{ \overline {T}}'_j(\xi) \widehat f'(\xi, z)  \overline \tau_j.
        \end{aligned} $$
        
        Now \DC implies that, for each $\xi\not=0$, at least one of the $\widehat {\overline {T}}'_j(\xi)$ is non-zero, which implies that $\widehat f'(\xi, z)$ is zero for $\xi\not=0$, and thus $f'$ is constant in the variable~$t$.
    \end{proof}

    \begin{proof}[Proof of Proposition \ref{prp:an_inclusion}]
        Let $[\widehat u(\xi)] \in H^q(\mathfrak m \oplus \mathfrak b, \mathfrak m; \End(H_\xi))$. Consider the smooth $q$-form $u \doteq \operatorname{Tr}( \hat u(\xi) \xi)$. It suffices to show that $\mathcal L_Z u = 0$ for all $Z \in \mathfrak m$ implies $\mathcal L_Z u = 0$ for all $Z \in \mathfrak t_\C$. Note that $\widehat u(\xi)$ is the Fourier coefficient of $u$.
        
        Since the Lie derivative is local, it suffices to verify this in open neighborhoods. Let $U \subset G/\T$ be an open neighborhood with complex coordinates $z=(z_1, \ldots, z_l)$ and let $V = \pi^{-1}(U)$. We can write $u|_V$ as
        \[
            u = \psum_{|I|=q} u_I  \dext \overline Z_I,
        \]
        where the $'$ symbol on the sum indicates that the sum is over increasing multi-indices, and since each $\dext \overline Z_I$ is $\T$-invariant, we have that $\mathcal L_Z \dext \overline Z_I = 0$ for all $I$, and thus
        \[
           0 = \mathcal L_Z u = \psum_{|I|=q} (\mathcal L_Z u_I) \dext \overline Z_I + u_I \mathcal L_Z(\dext \overline Z_I) = \psum_{|I|=q} \mathcal L_Z u_I \dext \overline Z_I
        \]
        which implies that $\mathcal L_Z u_I = Z u_I = 0$. By \DC and Lemma \ref{lem:invariance_functions}, $u_I$ is constant along $\T$, so $\mathcal L_Z u_I = 0$ for all $Z \in \mathfrak t$ and by linearity for all $Z \in \mathfrak t_\C$. Since $U \subset G/\T$ was arbitrary, this completes the proof.
    \end{proof}

    \section{Proof of Theorem \ref{thm:main}}
    \label{sec:the_proof}
    Let $\mathfrak v$ be a subalgebra of $\mathfrak g_\C$ corresponding to a left-invariant CR structure on the compact Lie group $G$. As explained in Section \ref{subsect: CR structures on hypersurface type}, we have a decomposition $\mathfrak v = \mathfrak m \oplus \mathfrak b$, where $\mathfrak m \subset \mathfrak t_\C$ is a bi-invariant CR structure on the maximal torus $\T \subset G$. Suppose the toral part $\mathfrak m$ satisfies \DC (see Section \ref{sec:division_condition}).  Our goal is to prove that the inclusion
    \begin{equation}
        H^q(\T, \mathfrak m) \hookrightarrow H^q(G, \mathfrak v)
    \end{equation}
    constructed in Section \ref{sec:involutive_structures} is an isomorphism. Combining the classification Theorem \ref{thm:classification_theorem} and Theorem \ref{thm:nodc_nocr1} with the \DC condition, we get that $\mathfrak v$ is of type $\CRO$ and thus $\mathfrak b \subset \mathfrak v$ is an ideal. Now, by Proposition \ref{prp:iso_dbar_algebra}, the cohomology space $H^{q}(G,\mathfrak v)$ is isomorphic to $H^q(\mathfrak v, \mathscr C^\infty(G))$, to which we apply the Fourier decomposition constructed in \eqref{eq:iso_fourier_dec}, obtaining
        \[
            H^q(\mathfrak v, \mathscr C^\infty(G)) \cong \widehat \bigoplus_{[\xi] \in \widehat{G}}H^q(\mathfrak v, \End(H_\xi)).
        \]
    From here, it remains to prove the following three claims:
    \begin{enumerate}[label=(\roman*)]
            \item \label{enu:trivial} If $\xi$ is the trivial representation, then $H^q(\mathfrak v, \End(H_\xi)) = H^q(\T, \mathfrak m)$.
            \item \label{enu:non_trivial} If $[\xi] \in \widehat G$ is non-trivial, then $H^q(\mathfrak v, \End(H_\xi)) = \{0\}$. In particular, for all $[u] \in H^q(\mathfrak v, \mathscr C^\infty(G))$ there is a sequence $[\widehat v(\xi)] \in H^q(\mathfrak v, \End(H_\xi))$, such that for all non-trivial $\xi$ we have $\widehat u (\xi) = \widehat \dbar_b \widehat v(\xi)$.
            \item \label{enu:convergence} The series \[ v~\doteq~\sum_{[\xi] \in \widehat G, ~ \xi \neq 0} d_\xi \operatorname{Tr}(\widehat v(\xi)\xi)\] is convergent. In particular, we have $u - \widehat u(0) = \dbarb v$, where $0$ denotes the trivial representation. Thus, we get $[u] = [\widehat u(0)] \in H^q(\mathfrak v, \End(H_0))$.
    \end{enumerate}
    Combining these three items we get that every cohomology class in $H^q(\mathfrak v, \mathscr C^\infty(G))$ has a representative in $H^q(\mathfrak v, \End(H_0)) = H^q(\T, \mathfrak m)$. Therefore, the injective map $H^q(\T, \mathfrak m) \to H^q(\mathfrak v, \mathscr C^\infty(G)) \cong H^q(G, \mathfrak v)$ is also surjective and thus an isomorphism.

    We begin with the first two items. First, we set up the necessary conditions to apply Theorem \ref{thm:hsc_thm_13} to the cohomology spaces $H^q(\mathfrak v, \End(H_\xi))$. Consider the decomposition $\mathfrak v = \mathfrak m \oplus \mathfrak b$. Since $\mathfrak m$ is abelian it is reductive in $\mathfrak v$, so it remains to verify that $\End(H_\xi)$ is completely reducible as an $\mathfrak m$-module. Indeed, note that the finite-dimensional representation $\mathfrak m \to \End(H_\xi)$ extends naturally to $\rho_{\xi} \colon \mathfrak g_{\C}\to \End(H_\xi)$, by differentiation with respect to left-invariant vector fields. Moreover, since $\mathfrak t_\C$ is a Cartan subalgebra of the Lie algebra $\mathfrak g_\C$, it follows that $\rho_\xi(\mathfrak t_\C)$ acts diagonally on $\End(H_\xi)$ (see, for example, \cite[Lemma 4.11.3]{duistermaat_lie_2000}.) In particular, $\mathfrak m \subset \mathfrak t_\C$ acts diagonally, and since $\mathfrak m$ is abelian, all its elements are simultaneously diagonalizable, which implies that $\End(H_\xi)$ is a completely reducible $\mathfrak m$-module (see also \cite[Theorem 5.1]{Serre1992}). Therefore, Theorem \ref{thm:hsc_thm_13} yields the isomorphism
        \begin{equation}\label{eq:cohomology}
            H^q(\mathfrak v, \End(H_\xi)) \cong \sum_{j + k = q}  H^j(\mathfrak m, \C) \otimes H^k(\mathfrak b, \End(H_\xi))^\mathfrak{m},\,\,q \geq 0,\,[\xi] \in \widehat{G}.
        \end{equation}

        From the \DC condition, we have that $H^j(\mathfrak m, \C) = H^j(\T, \mathfrak m)$. This follows from the fact that each cohomology class of the latter has a representative in the former \cite[Lemma 7]{jacobowitz_levi-flat_2023}. The following lemma, a special case of \cite[Theorem 2.28]{solleveld}, allows us to identify $H^k(\mathfrak b, \End(H_\xi))^{\mathfrak m}$ with the relative cohomology from Proposition~\ref{prp:really_important_prp}.

        \begin{lem} \label{lem:relative_cohomology_identification}
            Let $\mathfrak u = \mathfrak a \oplus \mathfrak b$, with $\mathfrak a \in \{\mathfrak m,  \mathfrak t_\C\}$, and let $F = \End(H_\xi)$ for some representation $\xi$. The inclusion $\mathfrak b \hookrightarrow \mathfrak u$ induces an isomorphism
            \[
                H^\bullet(\mathfrak u, \mathfrak a; F) \to H^\bullet(\mathfrak b, F)^{\mathfrak a}.
            \]
        \end{lem}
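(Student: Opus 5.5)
The strategy is to compare the two complexes directly at the cochain level and then use that $\mathfrak a$ acts semisimply, so that taking $\mathfrak a$-invariants commutes with cohomology. Recall that $C^q(\mathfrak u, \mathfrak a; F) = B^q(\mathfrak u, \mathfrak a; F)^{\mathfrak a}$, where $B^q$ consists of the $q$-cochains on $\mathfrak u$ annihilated by $i_X$ for all $X \in \mathfrak a$. Since $\mathfrak u = \mathfrak a \oplus \mathfrak b$ as vector spaces, restriction to $\Wedge^q \mathfrak b$ gives a linear isomorphism
\[
r \colon B^q(\mathfrak u, \mathfrak a; F) \to C^q(\mathfrak b, F).
\]
Its inverse extends a cochain on $\mathfrak b$ by zero whenever one argument lies in $\mathfrak a$. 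I would first check that $r$ intertwines the $\mathfrak a$-actions: the action $\mathcal L_X$ on $B^q$ (the usual Lie derivative) and the action \eqref{eq:lie_derivative} on $C^q(\mathfrak b, F)$. This uses that $\mathfrak b$ is an ideal. For $\mathfrak a = \mathfrak m$ this holds by the $\CRO$ form. For $\mathfrak a = \mathfrak t_\C$ it holds because $[\mathfrak t_\C, \mathfrak g_\alpha] \subset \mathfrak g_\alpha$. Since $[X, Y_i] \in \mathfrak b$, the two formulas agree term by term. Consequently $r$ restricts to an isomorphism $C^q(\mathfrak u, \mathfrak a; F) \cong C^q(\mathfrak b, F)^{\mathfrak a}$.

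Next I would check that $r$ commutes with the differentials on invariant cochains. For $u \in C^q(\mathfrak u, \mathfrak a; F)$ and $Y_1, \dots, Y_{q+1} \in \mathfrak b$, the value $\dext u(Y_1, \dots, Y_{q+1})$ computed in $\mathfrak u$ involves only brackets $[Y_j, Y_k] \in \mathfrak b$ and actions $Y_j \cdot$. It therefore coincides with the $\mathfrak b$-differential of $r(u)$. So $r$ is an isomorphism of complexes
\[
(C^\bullet(\mathfrak u, \mathfrak a; F), \dext) \cong (C^\bullet(\mathfrak b, F)^{\mathfrak a}, \dext).
\]
The differential on $C^\bullet(\mathfrak b, F)$ commutes with the $\mathfrak a$-action, by the Cartan-type identity in \cite{hochschild_cohomology_1953}, so the right side is indeed a subcomplex. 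This gives $H^\bullet(\mathfrak u, \mathfrak a; F) \cong H^\bullet(C^\bullet(\mathfrak b, F)^{\mathfrak a})$.

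The remaining and main step is to show that $H^\bullet(C^\bullet(\mathfrak b, F)^{\mathfrak a}) = H^\bullet(\mathfrak b, F)^{\mathfrak a}$. Here I would use complete reducibility. The space $C^q(\mathfrak b, F) \cong \Wedge^q \mathfrak b^* \otimes F$ is a finite-dimensional $\mathfrak a$-module on which $\mathfrak a \subset \mathfrak t_\C$ acts diagonalizably. Indeed, $\mathfrak t_\C$ acts diagonally on $F$, as shown just before \eqref{eq:cohomology}, and it acts on $\mathfrak b^*$ by the roots. Hence $C^q(\mathfrak b, F)$ splits into joint weight spaces for $\mathfrak a$, and $\dext$ preserves each weight space because it is $\mathfrak a$-equivariant. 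The complex thus splits as a direct sum over weights, and its cohomology splits accordingly. The zero-weight summand of cohomology is $H^\bullet(\mathfrak b, F)^{\mathfrak a}$, since invariants of a diagonalizable abelian action are exactly the zero-weight space. That summand is computed by the zero-weight subcomplex, which is $C^\bullet(\mathfrak b, F)^{\mathfrak a}$. Finally I would verify that the composite isomorphism is the map induced by the inclusion $\mathfrak b \hookrightarrow \mathfrak u$, which is immediate because that map is restriction, that is, $r$.
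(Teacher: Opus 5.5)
Your proof is correct, but it differs from the paper, which gives no argument of its own. The paper simply invokes the lemma as a special case of \cite[Theorem 2.28]{solleveld}. That is a Hochschild--Serre type result for $\mathfrak u = \mathfrak a \oplus \mathfrak b$ with $\mathfrak b$ an ideal, $\mathfrak a$ reductive in $\mathfrak u$, and $F$ semisimple as an $\mathfrak a$-module.

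You instead prove the special case by hand, in two steps:
\begin{itemize}
\item Restriction $r$ identifies $C^\bullet(\mathfrak u, \mathfrak a; F)$ with $C^\bullet(\mathfrak b, F)^{\mathfrak a}$ as complexes. This uses only that $\mathfrak a$ is a subalgebra and $\mathfrak b$ a complementary ideal.
\item The abelian algebra $\mathfrak a \subset \mathfrak t_\C$ acts diagonalizably on $\Wedge^q \mathfrak b^* \otimes F$: by negative roots on $\mathfrak b^*$, and diagonally on $F$ as noted before \eqref{eq:cohomology}. So $C^\bullet(\mathfrak b, F)$ splits into $\dext$-stable joint weight subcomplexes, and $\mathfrak a$ acts on the cohomology of each by the corresponding weight. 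Hence taking $\mathfrak a$-invariants commutes with taking cohomology, and the zero-weight subcomplex computes $H^\bullet(\mathfrak b, F)^{\mathfrak a}$.
\end{itemize}

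Your version has three advantages. It is elementary and self-contained. It identifies the isomorphism explicitly as the map induced by restriction. It also shows that semisimplicity of the $\mathfrak a$-action is needed only in the final step, not in the cochain-level identification.

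The cited theorem buys generality: it covers non-abelian reductive $\mathfrak a$, where the weight decomposition is replaced by splitting a semisimple module into its invariants and $\mathfrak a \cdot C$. In this paper, however, it functions as a black box. For the two cases $\mathfrak a \in \{\mathfrak m, \mathfrak t_\C\}$ that the paper actually uses, your argument is entirely sufficient.
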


        To prove \ref{enu:trivial}, we combine \eqref{eq:cohomology} and Proposition \ref{prp:really_important_prp} with the previous lemma to obtain
        \[ H^q(\mathfrak v, \End(H_0)) =  H^q(\T, \mathfrak m).\]
        The second item follows directly from
        \[
        H^k(\mathfrak b, \End(H_\xi))^\mathfrak{m} = H^k(\mathfrak m \oplus \mathfrak b, \mathfrak m;\End(H_\xi)) \subset H^k(\mathfrak t_\C \oplus \mathfrak b, \mathfrak t_\C;\End(H_\xi)) = \{0\}.
        \]
        Here, the first equality is due to Lemma \ref{lem:relative_cohomology_identification}, the inclusion is given by Proposition~\ref{prp:an_inclusion}, and the final equality follows from Proposition \ref{prp:really_important_prp}.

    It remains to prove \ref{enu:convergence}. Consider the map
    \begin{align}
        \Phi \colon H^q(G, \mathfrak v) \to H^q(\mathfrak v, \C),
    \end{align}
    defined in the following way: given $[u]\in H^q(G, \mathfrak v)$, we write the Fourier expansion
    \[
    u=\sum_{[\xi] \in \widehat{G}}d_\xi \Tr(\widehat{u}(\xi)\xi),
    \]
    and let 
    \[
    \Phi([u]) \doteq [\widehat{u}(0)] \in H^q(\mathfrak v, \C).
    \]
    which is, concretely, taking the average of $u$ along the Haar measure $\mathrm{d}_G$ and viewing the resulting (closed) left-invariant differential form as an element of $H^q(\mathfrak v, \C)$. The map $\Phi$ is a well-defined, surjective linear map. Our goal is to prove that $\Phi$ is injective.
    
    We must therefore prove the following: given a cohomology class $[u]\in H^q(G, \mathfrak v)$, we write the decomposition 
    \[
        u = \sum_{[\xi] \in \widehat{G}} d_\xi \operatorname{Tr}(\widehat{u}(\xi)\xi),
    \]
    where $\widehat{u}(\xi)\in \End(H_\xi)\otimes \underline \Lambda^q$. Suppose that $\widehat{u}(0)$ is exact, then we have to show that $u$ is exact.

    From the argument immediately following Lemma \ref{lem:relative_cohomology_identification}, $\widehat u(\xi)$ is exact for every $[\xi] \in \widehat{G}$. For each $q =1, 2, \ldots, n$, we identify $L^2(G, \underline \Lambda^{q})$ with $L^2(G) \otimes \underline \Lambda^{q}$ and thus we extend the $L^2$ inner product from $L^2(G)$ to $L^2(G, \underline \Lambda^{q})$. The definition of the norm in $L^2(G, \underline \Lambda^{q})$ depends on the inner product of $\mathfrak g_\C^*$, which defines an inner product on $\underline \Lambda^{q}$. Notice that, if we chose a different inner product on $\mathfrak g_\C^*$, we would have a different inner product on $L^2(G) \otimes \underline \Lambda^{q}$. However, all such norms on $L^2(G, \underline \Lambda^{q})$ are equivalent and thus define the same $L^2$-topology.

    For the remainder of the proof, it is advantageous to work on the quotient $\Omega = G/ \T$ rather than $G$ itself. This is because the operator $\dbar$ on $\Omega$ is elliptic, and therefore we can apply Hörmander $L^2$-estimates to control the growth of certain Fourier coefficients. We denote by $(H_\xi^*)^\T$ the set of elements in $H_\xi^*$ that are invariant under the action of the torus $\T$. In view of \cite[Lemma 1, Lemma 2]{doi_first_1987}, and using the identification $\End(H_\xi) \cong H_\xi \otimes (H_\xi^*)^\T$ we have
    \[
        L^2 (\Omega) = \bigoplus_{[\xi] \in \widehat G} H_\xi \otimes (H_\xi^*)^\T.
    \]
    Now, since for every $\xi$, there is $\widehat v(\xi) \in C^{q-1}(\mathfrak v, \End(H_\xi))$ solving $\widehat \dbar_b(\xi) \widehat v(\xi) = \widehat u(\xi)$, we can define $$f \doteq \Tr(\widehat v(\xi) \xi) \in \mathscr C^\infty(\Omega, \Lambda^{q-1}),$$ and $$g \doteq \Tr(\widehat u(\xi) \xi) \in \mathscr C^\infty(\Omega, \Lambda^q)$$ and these forms satisfy
    \[
        \dbarb (\Tr(\widehat v(\xi) \xi)) = \Tr(\widehat u(\xi) \xi).
    \]

    We now explain how to estimate $\|f\|_{L^2}$ using $\|g\|_{L^2}$. Consider the linear, densely defined operator
    \[
        \dbar_q \colon L^2(\Omega, \underline \Lambda^{q}) \to L^2(\Omega, \underline \Lambda^{q+1}).
    \]
    The adjoint of $\dbar$ is also linear and densely defined and is denoted by
    \[
        \dbar^*_q \colon L^2(\Omega, \underline \Lambda^{q+1}) \to L^2(\Omega, \underline \Lambda^{q}).
    \]
    The associated box operator is defined by $\Box_q = \dbar^*_q \dbar_q + \dbar_{q-1} \dbar^*_{q-1}$ and we have that
    $H^{0,q}_{\dbar}(\Omega, \C) \cong \ker \Box_q$. 

    If $f \in \ker \Box_q$, then
    \[
        0 = \langle \Box_q f, f \rangle = \langle \dbar f, \dbar f \rangle + \langle \dbar^* f, \dbar^* f \rangle.
    \]
    This implies that $\dbar f = 0$ and $\dbar^* f = 0$.
    
    Moreover, $\dim H^{q}_{\dbar}(\Omega, \C) < \infty$ and thus $\dbar$, as an operator densely defined on $L^2(\Omega, \underline \Lambda^{q})$ with range in $L^2(\Omega, \underline \Lambda^{q+1})$, has closed range.

    By \cite[Theorem 1.1.1]{hormander_l2_1965}, there exists a constant $C > 0$ such that for all $f \in \ker \Box_q^\perp$ in the range of $\dbar_{q}$, if $u \in \operatorname{dom}(\dbar_{q-1}) \cap \operatorname{ran}(\dbar^*_{q-1})$ is such that $\dbar u = f$, 
    then
    \[
        \|u\|_{L^2} \leq C\|f\|_{L^2}.
    \]
    We can therefore find, for every $[\xi] \in \widehat{G}$, a $(q-1)$-form $\widehat v(\xi)$ with values in $H_\xi$ solving $\dbar v(\xi) = u(\xi)$ satisfying
    \[
        \|\widehat v(\xi)\|_{\HS} \leq C \|\widehat u(\xi)\|_{\HS},
    \]
    where the constant is independent of $\xi$. Since $u$ is smooth, by using the previous inequality and \eqref{eq:smooth_ineq_fourier}, we conclude that the form $v \doteq \sum_{[\xi] \in \widehat G} d_\xi \operatorname{Tr}(\widehat v(\xi)\xi)$ is smooth and satisfies $\dbarb v = u$, as we wanted to show.
 
    \section{Proof of Theorem \ref{thm:necessity}}
    \label{sec:necessity_proof}
    In order to complete the proof of Theorem \ref{thm:necessity}, we need to establish the following result:
    \begin{thm}\label{thm:cr0_infinite} Let $\mathfrak{v}=\mathfrak{m}\oplus \mathfrak b_{\mathfrak t}$ be a $\CRO$ structure on the compact Lie group $G$. If $\mathfrak m$ does not satisfy \DC, then 
    \[
    H^\bullet(G,\mathfrak v)=\bigoplus_{q\geq 0 }H^q(G,\mathfrak v)
    \]
    is infinite-dimensional.
    \end{thm}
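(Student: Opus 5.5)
We follow the strategy sketched in the overview: produce, from the failure of \DC, a sequence of functions on $G$ on which $\mathfrak b_{\mathfrak t}$ acts trivially and $\mathfrak m$ acts by small scalars. Then we run the classical torus argument inside $G$.

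\textbf{Step 1: frequencies.} If \DC fails, then for a basis $\overline T_1,\dots,\overline T_m$ of $\mathfrak m$ there is a sequence $\xi_n\in\Z^d\setminus\{0\}$ (weights of $\T$, i.e.\ points of the character lattice) with $|\xi_n|\to\infty$ or at least pairwise distinct, such that
\[
\max_j|\widehat{\overline T_j}(\xi_n)|\le (1+|\xi_n|)^{-n}.
\]
For each $n$ we pick a Weyl group element $w_n$ with $w_n\xi_n$ dominant. Since $\mathfrak v$ is only determined up to conjugation and the Weyl group permutes positive systems, the conjugation replaces $\mathfrak b_{\mathfrak t}$ by another positive root algebra. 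I would therefore rather use $-w_0$-type adjustments and pass to a subsequence on which a single $w$ occurs, since the Weyl group is finite. Replacing $\mathfrak m$ by $w^{-1}\mathfrak m$ preserves the smallness of the symbols. After this we may assume that all $\xi_n$ are dominant for the positive system defining $\mathfrak b_{\mathfrak t}$. Here I should check the sign convention: the vectors in $\mathfrak b_{\mathfrak t}$ must annihilate highest weight vectors, so we may need lowest weights, i.e.\ $-\xi_n$, instead. Either choice is harmless because the symbol estimate is symmetric.

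\textbf{Step 2: test functions.} Let $\pi_n$ be the irreducible representation with highest weight $\xi_n$, with highest weight vector $e_n$. Set $\phi_n(g)=\langle \pi_n(g)e_n,e\rangle$ for a fixed $e$; with the left-invariant convention $Xf(x)=\frac{d}{dt}f(x\exp tX)$ this gives $X\phi_n=\langle\pi_n(g)\pi_n(X)e_n,e\rangle$. Then every $Y\in\mathfrak b_{\mathfrak t}$ satisfies $Y\phi_n=0$, and every $\overline T_j$ satisfies $\overline T_j\phi_n=\widehat{\overline T_j}(\xi_n)\phi_n$. The functions $\phi_n$ lie in distinct isotypic components $\End(H_{\xi_n})$, so they are linearly independent. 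Their Sobolev norms grow at most polynomially in $|\xi_n|$, because $\|\phi_n\|_{C^k}\lesssim\langle\xi_n\rangle^{k}$ and $d_{\xi_n}$ is polynomial in $|\xi_n|$.

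\textbf{Step 3: the dichotomy.} We argue as in \cite{araujo_real_2026}.

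\emph{Case (a).} If $\widehat{\overline T_j}(\xi_n)=0$ for all $j$ for infinitely many $n$, then those $\phi_n$ are CR functions. Hence $H^0(G,\mathfrak v)$ is infinite-dimensional.

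\emph{Case (b).} Otherwise, after passing to a subsequence, we can choose $j_n$ and set $\epsilon_n=\widehat{\overline T_{j_n}}(\xi_n)\neq0$ with $|\epsilon_n|$ rapidly decreasing. Passing to a further subsequence we may fix $j_n=j$. We define the $1$-form
\[
u=\sum_n \epsilon_n\phi_n\,\overline{\tau}_j,
\]
where $\overline\tau_j$ is the left-invariant element of $\mathfrak v^*$ dual to $\overline T_j$, vanishing on the other $\overline T_k$ and on $\mathfrak b_{\mathfrak t}$. We will show that $u$ is $\dbarb$-closed and is a limit of exact forms, but is not exact.

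\begin{itemize}
\item \emph{Convergence.} Since $|\epsilon_n|\le(1+|\xi_n|)^{-n}$ beats the polynomial growth from Step 2, the series for $u$ converges in $\mathscr C^\infty$.
\item \emph{Closedness.} We use the Lie algebra complex $C^\bullet(\mathfrak v,\mathscr C^\infty(G))$ of Proposition~\ref{prp:iso_dbar_algebra}. We have $\dext(\phi_n\overline\tau_j)(X,Y)=X\phi_n\,\overline\tau_j(Y)-Y\phi_n\,\overline\tau_j(X)-\phi_n\overline\tau_j([X,Y])$. The last term vanishes because $\overline\tau_j$ kills $[\mathfrak v,\mathfrak v]\subset\mathfrak b_{\mathfrak t}$ ($\mathfrak m$ abelian, $\mathfrak b_{\mathfrak t}$ an ideal). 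The $\mathfrak b_{\mathfrak t}$ components vanish since $Y\phi_n=0$. The $\mathfrak m$--$\mathfrak m$ components reduce to $\widehat{\overline T_k}(\xi_n)\phi_n\,\delta_{jl}-\widehat{\overline T_l}(\xi_n)\phi_n\,\delta_{jk}$, which need not vanish. So rather than a single component, I would take $u_n=\dbarb\phi_n=\sum_k\widehat{\overline T_k}(\xi_n)\phi_n\overline\tau_k$ and set $u=\sum_n c_n\,\dbarb\phi_n$ with $c_n=1$. This $u$ is a limit of exact forms $\dbarb(\sum_{n\le N}\phi_n)$, and the series converges by the smallness of the symbols.
\item \emph{Non-exactness.} Suppose $u=\dbarb v$. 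Taking the Fourier component at $\xi_n$ and projecting onto the $\phi_n$-line, the $\mathfrak b_{\mathfrak t}$-equations force the relevant coefficient of $\widehat v(\xi_n)$ to be a multiple of $\phi_n$ up to terms killed by $\mathfrak m$. This step needs care; the $H^0$ case handles the kernel. The coefficient must then equal $1$, so $v$ has Fourier coefficients of size $\gtrsim1$ at infinitely many frequencies, contradicting smoothness.
\end{itemize}

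Hence $H^1(G,\mathfrak v)$ is non-Hausdorff and therefore infinite-dimensional.

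The main obstacle is the non-exactness step. One must show that the equation $\dbarb v=\dbarb\phi_n$, restricted to the $\xi_n$-isotypic component, has no solution $v$ much smaller than $\phi_n$. Concretely, every solution must differ from $\phi_n$ by a CR element of $\End(H_{\xi_n})$. I would try to handle this by applying the Hochschild--Serre decomposition (Theorem~\ref{thm:hsc_thm_13}) on $\End(H_{\xi_n})$ together with the $\mathfrak t_\C$-weight decomposition. Projecting onto the $\xi_n$-weight space of the right $\T$-action, the $\mathfrak b_{\mathfrak t}$-invariants there are spanned by highest weight vectors, by Proposition~\ref{prp:really_important_prp} in degree $0$. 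This should reduce the estimate to the scalar torus computation, where it is immediate.
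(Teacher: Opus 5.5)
Your route is the paper's: highest-weight matrix coefficients killed by $\mathfrak b_{\mathfrak t}$, then either infinitely many CR functions or the closed form $u=\sum_n\dbarb\phi_n$ that is a limit of exact forms. The crux, non-exactness of $u$, is left open, and the route you sketch for it is misdirected. No estimate and no Hochschild--Serre argument is needed, and Proposition~\ref{prp:really_important_prp} concerns $\mathfrak t_\C$-invariants, i.e.\ right $\T$-weight $0$, not weight $\xi_n$.

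The missing observation is short. Normalize $\|\phi_n\|_{L^2}=1$. Since $\mathfrak v$ is $\Ad(\T)$-stable and the $\overline\tau_k$ have right $\T$-weight $0$, the projection $P_{\xi_n}$ onto right $\T$-weight $\xi_n$ commutes with $\dbarb$. Because the $\xi_m$ are pairwise distinct, $P_{\xi_n}u=\dbarb\phi_n$. Suppose $\dbarb F=u$. Then $h_n=P_{\xi_n}F-\phi_n$ is a CR function of right weight $\xi_n$, so for every $k$ both $\overline T_kh_n=\widehat{\overline T_k}(\xi_n)h_n$ and $\overline T_kh_n=0$ hold. In case (b) one of these scalars is nonzero, so $h_n=0$ \emph{exactly}, not merely up to a small error. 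Hence $F$ has mutually orthogonal components $P_{\xi_n}F=\phi_n$ of norm $1$ for infinitely many $n$, which is impossible already for $F\in L^2(G)$. Your isotypic version closes the same way: a CR element of the $\xi_n$-isotypic component is a combination of highest-weight coefficients, on which $\mathfrak m$ acts by these same nonzero scalars.

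The more serious problem is Step~1. Replacing $\mathfrak m$ by $w^{-1}\mathfrak m$ while keeping $\mathfrak b_{\mathfrak t}$ produces a different CR structure. The only legitimate move is conjugation by a representative $g\in N_G(\T)$ of $w$. This is a CR isomorphism onto $\Ad(g)\mathfrak m\oplus\bigoplus_{\alpha\in w\Delta_+}\mathfrak g_\alpha$, and the chamber moves with $\mathfrak m$: $w\mu$ is dominant for $w\Delta_+$ if and only if $\mu$ is dominant for $\Delta_+$.

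A nonzero function of right weight $\mu$ annihilated by $\mathfrak b_{\mathfrak t}$ exists only for $\mu\in\mathcal C^+$. So your construction, even combined with $\mu\mapsto-\mu$, needs infinitely many bad weights in $\mathcal C^+\cup(-\mathcal C^+)$. This holds when $|W|\le 2$, but not in general. On $\mathrm{U}(3)$, take $\mathfrak m$ spanned by some $L\in\mathfrak t_\C$ with $\mathfrak m\cap\mathfrak g=\{0\}$ whose symbol vanishes exactly on $\R v$, where $v=(1,1+\beta,-\beta)$ and $\beta>0$ is Liouville. Then \DC fails along $\mu_n=(q_n,q_n+p_n,-p_n)$. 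But every weight with small symbol and large norm lies close to $\R v$, hence in $\{\mu_2>\mu_1>\mu_3\}$ or its negative. On $\pm\mathcal C^+=\pm\{\mu_1\ge\mu_2\ge\mu_3\}$ the symbol is bounded below by $c|\mu|$, so no highest-weight coefficient has small symbol at all.

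The paper's Lemma~\ref{lem:dominant_bad_weights}, followed by dropping the tildes while keeping $\mathfrak b_{\mathfrak t}$, performs the same reduction and is open to the same objection. Covering the other chambers presumably requires classes built from higher $\mathfrak b_{\mathfrak t}$-cohomology (Kostant, Bott--Borel--Weil) in degrees above $1$. That would fit the theorem being stated for the total cohomology.
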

    This requires some preliminaries on weights and weight representations, which we discuss next.
    \subsection{Preliminaries on weights}
    We retain the notation of the previous sections. Given the maximal torus $\T$, we let $X^\ast(\T)\doteq\Hom(\T,S^1)$ be the character lattice of $\T$, whose elements we refer to as \textit{integral weights}. Once coordinates $\T^d\simeq \R^d/\Z^d$ are chosen, one can identify $X^\ast(\T)$ with $\Z^d$ via the Fourier frequencies. We shall not distinguish notationally between a character $\chi \in X^\ast(\T)$ and its differential, viewed as an element of $\mathfrak t^\ast_\C$. Thus, expressions like $\chi(H)$ for $H\in \mathfrak t_\C$ refer to the differentiated character.  We use the notation $W=N_G(\T)/\T$ for the Weyl group, which acts on $X^\ast(\T)$ by pullback.  

    Recall that, given a root $\alpha \in \Delta$, its \textit{coroot} $\alpha^{\vee}$ is defined as the unique element in $[\mathfrak g_\alpha, \mathfrak g_{-\alpha}]\subset \mathfrak t_\C$ which satisfies $\alpha(\alpha^{\vee})=2$. The \textit{dominant chamber} is then the closed cone
    \[
    \mathcal{C}^{+}=\{\lambda \in \mathfrak t^{\ast};\,\langle \lambda,\alpha^{\vee}_i\rangle \geq 0\text{ for all simple roots }\alpha_i\}.
    \]
    If an integral weight belongs to $\mathcal{C}^{+}$, we say that it is \textit{dominant}. Moreover, given any integral weight $\mu \in X^{\ast}(\T)$, there is an element $w \in W$ such that $w\mu$ is dominant.

    Given a finite-dimensional complex representation $(\pi,V)$ of $G$, its restriction to $\T$ is completely reducible, and hence decomposes into one-dimensional characters
    \[
    V=\bigoplus_{\mu \in X^\ast(\T)}V_\mu,
    \]
    where $V_\mu\doteq \{v\in V;\,\pi(t)v=\mu(t)v,\,\,\forall t\in \T\}$.
    The $\mu \in X^\ast(\T)$ such that $V_\mu\not=\{0\}$ are the \textit{weights} of the representation $\pi$, and the spaces $V_\mu$ are the corresponding weight spaces. Note that, if $X\in \mathfrak g_\alpha$, then
    \[
    \mathrm{d}\pi(X)V_\mu \subset V_{\mu+\alpha},
    \]
    so the root vector $X_\alpha$ raises the weight by $\alpha$. A non-zero vector $v\in V$ is called a \textit{highest-weight vector} if there exists a weight $\lambda \in X^\ast(\T)$ such that $\mathrm{d} \pi(H)v = \lambda(H)v$ for all $H\in \mathfrak t_\C$ and $\mathrm{d} \pi(X)v=0$ if $X\in \mathfrak b_{\mathfrak t}$. The following theorem will be crucial in what follows, see for instance \cite[Theorem 4.9.5]{duistermaat_lie_2000}:
    
    \begin{thm}[Highest-weight classification]
    \label{thm:highest-weight-classification}
    For every dominant integral weight $\lambda\in X^*(\T)\cap \mathcal{C}^+$, there exists
    a unique irreducible finite-dimensional representation $(\pi_\lambda,V_\lambda)$ of
    $G$ whose highest weight is $\lambda$. Conversely, every irreducible
    finite-dimensional representation of $G$ is of this form.
    \end{thm}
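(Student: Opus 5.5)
The plan is to prove the converse direction and uniqueness algebraically. Existence will come from Peter--Weyl completeness together with the Weyl integration formula, because that argument produces representations of $G$ itself rather than of $\mathfrak g_\C$ or of a covering group.

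\emph{Step 1 (every irreducible has a dominant highest weight).} Let $(\pi,V)$ be irreducible. By the weight decomposition above, $V$ has finitely many weights. Choose $H_0\in i\mathfrak t$ with $\alpha(H_0)>0$ for all $\alpha\in\Delta_+$, and take a weight $\lambda$ maximizing $\mu\mapsto\mu(H_0)$. Since $\mathrm d\pi(X)V_\lambda\subset V_{\lambda+\alpha}$ for $X\in\mathfrak g_\alpha$, every $0\neq v\in V_\lambda$ is a highest-weight vector. For a simple root $\alpha_i$, the elements $\mathfrak g_{\alpha_i},\mathfrak g_{-\alpha_i},\alpha_i^\vee$ span a copy of $\mathfrak{sl}_2$. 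The vector $v$ is annihilated by its raising operator, so finite-dimensional $\mathfrak{sl}_2$-theory gives $\lambda(\alpha_i^\vee)\in\Z_{\ge0}$. Hence $\lambda\in X^*(\T)\cap\mathcal C^+$.

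\emph{Step 2 (uniqueness).} Write $\mathfrak n^-=\bigoplus_{\alpha\in\Delta_+}\mathfrak g_{-\alpha}$. By PBW and irreducibility, $V=U(\mathfrak n^-)v$. It follows that $V_\lambda=\C v$, that every other weight is $\lambda$ minus a sum of positive roots, and that $\lambda$ is the unique highest weight.

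If $V,V'$ are irreducible with the same highest weight $\lambda$ and highest-weight vectors $v,v'$, let $S\subset V\oplus V'$ be the $\mathfrak g_\C$-submodule generated by $(v,v')$. Then $S_\lambda=\C(v,v')$. Any proper submodule of $S$ cannot contain $S_\lambda$, so the sum of all proper submodules is proper and $S$ is irreducible. Both projections $S\to V$ and $S\to V'$ are nonzero, hence isomorphisms by Schur. Since $G$ is connected, $\mathrm d\pi$ determines $\pi$, so $V\cong V'$ as $G$-representations.

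\emph{Step 3 (existence) --- the main obstacle.} Suppose some dominant integral $\lambda$ is not the highest weight of any irreducible representation. Let $\rho=\tfrac12\sum_{\alpha\in\Delta_+}\alpha$, and consider the $W$-antisymmetric trigonometric polynomial
\[
A_{\lambda+\rho}=\sum_{w\in W}\varepsilon(w)e^{w(\lambda+\rho)}.
\]
This need not be a function on $\T$ if $\rho\notin X^*(\T)$. To avoid the issue, I would work with $A_{\lambda+\rho}\overline{\delta}$, where $\delta=e^{\rho}\prod_{\alpha>0}(1-e^{-\alpha})$ is the Weyl denominator; the product $A_{\lambda+\rho}\overline\delta$ is a genuine function on $\T$.

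By Steps 1--2 and the Weyl integration formula, each irreducible character $\chi_\mu$ satisfies
\[
\chi_\mu\,\delta=\sum_{\nu}c_\nu A_{\nu+\rho}, \qquad c_\mu=1,
\]
with $\nu$ ranging over dominant weights at most $\mu$. Triangularity and Weyl-group considerations give $c_\nu=\pm1$, but for the argument only the triangular shape matters. Suppose $\lambda$ never occurs as a highest weight. By induction along the dominance order, the orthogonality $\langle A_{\lambda+\rho},\chi_\mu\delta\rangle_{L^2(\T)}=0$ holds for all $\mu$. The Weyl integration formula then yields a nonzero class function $f$ on $G$ with $f|_\T\cdot|\delta|^2$ proportional to $\overline{\delta}A_{\lambda+\rho}$ and $f$ orthogonal to all characters. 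This contradicts Peter--Weyl, since the characters span a dense subspace of $L^2(G)^G$.

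The delicate points are the triangularity of $\chi_\mu\delta$ and the bookkeeping around $\rho$ possibly being non-integral. If that becomes too heavy, I would instead construct $V_\lambda$ directly in the Borel--Weil form. This fits the paper's setup, since it consists of the functions $f\in\mathscr C^\infty(G)$ with $f(gt)=\lambda(t)^{-1}f(g)$ and $Xf=0$ for $X\in\mathfrak b_{\mathfrak t}$. In that version, the difficulty shifts to proving that this space is nonzero and irreducible, using Peter--Weyl and Step 2.
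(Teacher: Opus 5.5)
The paper gives no proof of this statement; it quotes it from Duistermaat--Kolk [Theorem 4.9.5], so your proposal has to stand on its own. Steps 1 and 2 are the standard algebraic arguments and are essentially correct. Step 3, however, has a genuine gap, located exactly where you say ``only the triangular shape matters''.

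\textbf{The gap.} Let $\mu$ be a highest weight with $\mu>\lambda$, and let $c^\mu_\lambda$ be the coefficient of $A_{\lambda+\rho}$ in $\chi_\mu\delta$. Then $\langle A_{\lambda+\rho},\chi_\mu\delta\rangle_{L^2(\T)}=|W|\,\overline{c^\mu_\lambda}$. Triangularity says nothing about this lower coefficient, and no induction on the dominance order can force it to vanish: an expansion such as $\chi_\mu\delta=A_{\mu+\rho}+A_{\lambda+\rho}$ is perfectly triangular.

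\textbf{The missing idea} is a norm computation. Schur orthogonality gives $\|\chi_\mu\|_{L^2(G)}=1$. For dominant $\nu,\nu'$ one has $\langle A_{\nu+\rho},A_{\nu'+\rho}\rangle_{L^2(\T)}=|W|$ if $\nu=\nu'$ and $0$ otherwise. Combining these with the Weyl integration formula gives
\[
1=\|\chi_\mu\|_{L^2(G)}^2=\frac{1}{|W|}\,\|\chi_\mu\delta\|_{L^2(\T)}^2=\sum_\nu |c^\mu_\nu|^2 .
\]
Since $c^\mu_\mu=1$, all other coefficients vanish, so $\chi_\mu\delta=A_{\mu+\rho}$; this is Weyl's character formula. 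Your claim that the $c_\nu$ are $\pm1$ should be replaced by this.

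With the character formula in hand, the rest of your argument goes through:
\begin{itemize}
\item $A_{\lambda+\rho}$ is immediately orthogonal to every $\chi_\mu\delta$.
\item $A_{\lambda+\rho}/\delta=(e^{-\rho}A_{\lambda+\rho})/(e^{-\rho}\delta)$ is a $W$-invariant genuine function on the regular set. It therefore defines a class function $f\in L^2(G)$ with $\|f\|_{L^2(G)}=1$ that is orthogonal to all characters.
\item This contradicts completeness of characters among $L^2$ class functions.
\end{itemize}
The $\rho$-bookkeeping is then harmless, since every integrand that occurs ($A\overline{\delta}$, $\chi|\delta|^2$, $A\overline{A'}$) is a genuine function on $\T$.

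\textbf{Two smaller points.}
\begin{itemize}
\item In Step 2, ``the sum of all proper submodules is proper'' only shows that $S$ has a unique maximal submodule $M$; it does not show $S$ is irreducible. You can fix this in either of two ways. You can invoke complete reducibility of $S$, which holds because $S$ is $G$-stable, $G$ being connected. More simply, the kernels of the two surjections $S\to V$ and $S\to V'$ are maximal submodules, hence both equal $M$, so $V\cong S/M\cong V'$.
\item In the Borel--Weil fallback the sign is off. With the paper's conventions, the functions $g\mapsto \ell(\pi_\lambda(g)v^+)$ of Section 7 satisfy $f(gt)=\lambda(t)f(g)$, and the resulting space is $V_\lambda^*$ under left translation, not $V_\lambda$. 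With $\lambda(t)^{-1}$ the space is nonzero only when $-\lambda$ is dominant. Moreover, non-vanishing of this space is equivalent to the existence statement itself, so this route does not sidestep the difficulty.
\end{itemize}
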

    Now we can move on to the proof of Theorem \ref{thm:cr0_infinite} (and consequently, of Theorem \ref{thm:necessity}).
    \subsection{Proof of Theorem \ref{thm:cr0_infinite}}

    We write $\mathfrak v= \mathfrak m \oplus \mathfrak b_{\mathfrak t}$, and assume that $\mathfrak m$ does not satisfy \DC. Let $L_1,\ldots,L_r$ be a basis of $\mathfrak m$. 
    \begin{lem}\label{lem:dominant_bad_weights} There exists a sequence of pairwise distinct dominant integral weights $\lambda_n$ and an element $g\in G$ such that the basis of $\Ad(g)\mathfrak m$ induced by $L_1,\ldots,L_r$, which we denote by $\widetilde{L_1},\ldots,\widetilde{L_r}$, satisfies
    \[
    \varepsilon_n\doteq \max_{1\leq j \leq r}\left|\lambda_n(\widetilde{L_j})\right| \leq (1+|\lambda_n|)^{-2n},\qquad n\in \N.
    \]
    \end{lem}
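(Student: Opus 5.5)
The plan has three steps: extract a sequence of bad frequencies from the failure of \DC, pass to a subsequence on which a single Weyl group element makes them dominant, and realize that Weyl element by conjugation with some $g\in N_G(\T)$.

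\textbf{Step 1: bad frequencies from the failure of \DC.} Failure of \DC is independent of the chosen basis of $\mathfrak m$, since changing basis changes $\max_j|\widehat{L_j}(\xi)|$ by at most a multiplicative constant. So \DC fails for $L_1,\ldots,L_r$. Negating the condition with $C=1$ and $\rho=2n+1$ (absorbing constants) gives, for each $n$, a nonzero $\mu_n\in\Z^d\cong X^\ast(\T)$ with
\[
\max_j|\mu_n(L_j)|\leq (1+|\mu_n|)^{-2n-1}.
\]
Here $\widehat{L_j}(\xi)$ agrees with the differentiated character $\mu(L_j)$ up to a fixed factor $2\pi i$ coming from the identification of characters with frequencies; this factor is absorbed by taking $\rho$ slightly larger.

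\textbf{Step 2: pairwise distinct and unbounded.} Only finitely many $\mu$ have $|\mu|\le R$. For each such nonzero $\mu$ with $\max_j|\mu(L_j)|=0$, every multiple $k\mu$ also satisfies the bound. So we can always pick $\mu_n$ to be new: either infinitely many distinct frequencies arise from the negation (choose $\mu_n$ inductively outside the finite set already used), or some $\mu$ is annihilated by all $L_j$ and we take $\mu_n=k_n\mu$ with $k_n\to\infty$. In either case we may assume the $\mu_n$ are pairwise distinct.

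\textbf{Step 3: one Weyl element for the whole sequence.} For each $n$ choose $w_n\in W$ with $w_n\mu_n$ dominant. Since $W$ is finite, passing to a subsequence lets us assume $w_n=w$ is constant. Choose a representative $g\in N_G(\T)$ of $w^{-1}$ (or $w$, depending on conventions). Set $\lambda_n\doteq w\mu_n$ and $\widetilde{L_j}\doteq\Ad(g)L_j\in\mathfrak t_\C$; this lies in $\mathfrak t_\C$ because $g$ normalizes $\T$. The Weyl action on characters is pullback by conjugation, so after differentiating
\[
\lambda_n(\widetilde{L_j})=(w\mu_n)(\Ad(g)L_j)=\mu_n(L_j),
\]
with $g$ chosen so that the pullbacks cancel. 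Since the $W$-action preserves an invariant norm, $|\lambda_n|=|\mu_n|$ for an $\Ad$-invariant norm; any fixed norm on $\Z^d$ is equivalent to it, and the constants are absorbed by the extra room in the exponent. The $\lambda_n$ are pairwise distinct because $w$ is injective.

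\textbf{Step 4: reindexing.} Passing to a subsequence replaces the index $n$ by $n_k\ge k$. Since the bound at index $n_k$ is at least as strong as the bound required at index $k$, we get $\varepsilon_k\le(1+|\lambda_k|)^{-2k}$ after relabeling.

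The main care point is Step 3: fixing the convention so that $\Ad(g)$ and the action of $w$ on characters compose to the identity pairing, and checking that the norm $|\lambda_n|$ is $W$-invariant up to constants. Both are routine. The uniform exponent margin ($2n+1$ versus $2n$) is what absorbs the constants, including in the case $|\mu_n|$ is small, which can happen at most finitely often once the $\mu_n$ are distinct.
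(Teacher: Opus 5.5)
Your argument is the paper's. You negate \DC, move each bad frequency into the dominant chamber by a Weyl element, use finiteness of $W$ to fix one $w$ along a subsequence, and realize $w$ by some $g\in N_G(\T)$ so that $\lambda_n(\Ad(g)L_j)=\mu_n(L_j)$. You also treat pairwise distinctness (multiples of an annihilated frequency versus choosing new frequencies inductively) and the relabeling after passing to a subsequence, both of which the paper's short proof passes over.

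One step fails for the reason you give: the claim that norm-comparison constants are absorbed by the single extra power in $\rho=2n+1$. Suppose $1+|w\mu|\le K(1+|\mu|)$ with $K>1$. This happens when $|\cdot|$ is a coordinate norm on $\Z^d$, which $W$ need not preserve, or when you compare such a norm with a $W$-invariant one. Then going from $(1+|\mu_n|)^{-2n-1}$ to $(1+|\lambda_n|)^{-2n}$ costs a factor $K^{2n}$, because the constant is raised to the power $2n$. The extra factor $(1+|\mu_n|)^{-1}$ covers this only if $1+|\mu_n|$ is at least of order $K^{2n}$. Nothing in your choice ensures this. The negation of \DC only asserts that some bad frequency exists, and a single frequency with an extremely small symbol can serve for a long range of exponents, so $|\mu_n|$ may grow very slowly in $n$.

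The repair is immediate, in either of two ways:
\begin{enumerate}
\item Negate \DC with an $n$-dependent constant, e.g.\ $C_n$ of order $K^{-2n}$ and $\rho=2n$. This is allowed because \DC fails for every pair $C,\rho$.
\item Note that \DC does not change when the norm is replaced by an equivalent one (for fixed $\rho$ this only changes $C$), and negate it directly in a $W$-invariant norm. Then $|\lambda_n|=|\mu_n|$ exactly and there is no constant to absorb; this is what the paper's proof tacitly uses.
\end{enumerate}
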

    \begin{proof} Failure of \DC means that there is a sequence $\mu_n$ of integral weights such that 
    \[
    \max_{1\leq j \leq r}|\mu_n(L_j)|\leq (1+|\mu_n|)^{-2n},\,\,n\in \N.
    \]
    For each $n$, there is an element $w_n\in W$ such that $w_n\mu_n$ is dominant. Since $W$ is a finite group we can, after passing to a subsequence, find a single element $w$ such that $w\mu_n$ is dominant for every $n$. Choosing a representative $g\in N_G(\T)$ for $w\in W$, we obtain the result.
    \end{proof}
    Since replacing $\mathfrak v$ with $\Ad(g)\mathfrak v$ does not change any of the properties of the CR complex, we shall drop the tildes on $\widetilde{L}_j$.

    We shall use the right action of $\T$ on the CR complex. Since $\mathfrak v$ is $\Ad(\T)$-invariant, right translation by elements of $\T$ acts on $C^q(\mathfrak v, \mathscr C^\infty(G))$, in the following way: given $u\in C^q(\mathfrak v, \mathscr C^\infty(G))$ and $t\in \T$,
    \[
    (R^{\ast}_tu)_g(X_1\ldots,X_q)=u_{gt}(\Ad(t^{-1})X_1,\ldots,\Ad(t^{-1})X_q),\qquad X_j\in \mathfrak v.
    \]
    The operator $\dbar_b$ commutes with this action, hence the CR complex decomposes into $\T$-isotypic components.

    Let $X_\alpha\in\mathfrak g_\alpha$, $\alpha\in\Delta_+$, and let
    $\omega^\alpha$ denote the corresponding dual covector in $\mathfrak v^*$.
    Let $\tau^1,\ldots,\tau^r$ be a basis of $\mathfrak m^*$. The forms
    $\tau^j$ have right $\T$-weight $0$, while $\omega^\alpha$ has right
    $\T$-weight $-\alpha$. Indeed,
    \[
    \Ad(t)M=M,\qquad M\in\mathfrak m,
    \]
    because $\mathfrak m\subset\mathfrak t_\C$ and $\T$ is abelian, while
    \[
    \Ad(t)X_\alpha=\alpha(t)X_\alpha.
    \]
    Thus the dual covectors have the opposite weights.

    Now, we apply Lemma \ref{lem:dominant_bad_weights}. For every $n\in \N$, let $V_n$ be the irreducible representation of $G$ which has highest weight $\lambda_n$ and choose a highest-weight vector $0\neq v_n^+\in V_{\lambda_n}$.
    For $\ell\in V_{\lambda_n}^*$, define
    \[
    f_{n,\ell}(g)=\ell(\pi_{\lambda_n}(g)v_n^+),
    \qquad g\in G.
    \]
    Then, for every $H\in\mathfrak t_\C$, we get
    $Hf_{n,\ell}=\lambda_n(H)f_{n,\ell}$,
    and, for every $X\in\mathfrak b_{\mathfrak t}$, we have $Xf_{n,\ell}=0$.
    Indeed,
    \[
    (Yf_{n,\ell})(g)
    =
    \frac{d}{ds}\bigg|_{s=0}
    \ell(\pi_{\lambda_n}(g\exp(sY))v_n^+)
    =
    \ell(\pi_{\lambda_n}(g)d\pi_{\lambda_n}(Y)v_n^+),
    \]
    and the assertions follow from the defining properties of $v_n^+$.
    
    Fix $\ell_n\in V_{\lambda_n}^*$ so that $f_n \doteq f_{n,\ell_n}$ is nonzero, and normalize it by $\|f_n\|_{L^2(G)}=1$.
    Then
    \[
    \bar\partial_b f_n
    =
    \sum_{j=1}^r \lambda_n(L_j)f_n\,\tau^j.
    \]
    Set $u_n \doteq \bar\partial_b f_n$. If $u_n=0$ for infinitely many $n$, then the functions $f_n$ are nonzero CR
    functions belonging to pairwise distinct irreducible representations. Hence
    $H^0(G,\mathfrak v)$ is infinite-dimensional, and the theorem follows. We may
    therefore pass to a subsequence and assume that $u_n\neq0$ for every $n$.
    
    We claim that the series
    \[
        u \doteq \sum_{n=1}^{\infty}u_n
    \]
    converges in $\mathscr C^\infty(G,\underline \Lambda^1)$. Let $s\geq0$. Since the $f_n$'s are
    matrix coefficients of irreducible representations whose highest weights are
    $\lambda_n$, there exist constants $C_s>0$ and $M_s>0$, independent of $n$,
    such that
    \[
    \|f_n\|_{H^s(G)}\leq C_s(1+|\lambda_n|)^{M_s}.
    \]
    Therefore
    \[
    \|u_n\|_{H^s(G)}
    \leq
    C_s'\varepsilon_n(1+|\lambda_n|)^{M_s}
    \leq
    C_s'(1+|\lambda_n|)^{-2n+M_s}.
    \]
    After passing to a further subsequence if necessary, the last expression is
    summable in $n$. Hence $u \in \mathscr C^\infty(G, \underline \Lambda^{1})$. Since each
    $u_n=\bar\partial_b f_n$ is $\bar\partial_b$-closed, $u$ is
    $\bar\partial_b$-closed.
    
    We now prove that $u$ is not $\bar\partial_b$-exact. Suppose, by contradiction,
    that there exists $F \in \mathscr C^\infty(G)$ such that
    \[
    \bar\partial_bF=u.
    \]
    For $\chi\in X^*(\T)$, let $P_\chi$ denote the projection onto the $\chi$-component
    for the right $\T$-action on forms:
    \[
    P_\chi a
    =
    \int_\T \chi(t)^{-1}R_t^\ast a\,\mathrm{d}t.
    \]
    Since $\bar\partial_b$ commutes with $R_t^\ast$, it commutes with $P_\chi$.
    
    Each $f_n$ has coefficient right $\T$-weight $\lambda_n$, and $u_n$ has total
    right $\T$-weight $\lambda_n$, because the forms $\tau^j$ have weight $0$.
    The weights $\lambda_n$ are pairwise distinct. Therefore, applying
    $P_{\lambda_n}$ to the equation $\bar\partial_bF=u$, we obtain
    \[
    \bar\partial_b(P_{\lambda_n}F)=u_n=\bar\partial_b f_n.
    \]
    For functions, total right $\T$-weight is the same as coefficient right
    $\T$-weight, so $P_{\lambda_n}F$ is the usual coefficient right $\T$-weight
    $\lambda_n$ component of $F$.
    
    Set $h_n \doteq P_{\lambda_n}F-f_n$. Then $\bar\partial_b h_n=0$. Moreover, $h_n$ has coefficient right $\T$-weight $\lambda_n$. Hence, for
    every $j$, $L_jh_n=\lambda_n(L_j)h_n$.
    On the other hand, since $h_n$ is CR and $L_j\in\mathfrak m\subset
    \mathfrak v$, we also have $L_jh_n=0$.
    Because $u_n\neq0$, at least one of the numbers $\lambda_n(L_j)$ is nonzero.
    It follows then that  $h_n=0$. Consequently,
    \[
    P_{\lambda_n}F=f_n
    \]
    for every $n$.
    
    This contradicts the smoothness of $F$. Indeed, let $\Delta_G$ be the
    Laplace--Beltrami operator associated with a bi-invariant metric on $G$.
    Each $f_n$ belongs to an irreducible $G$-type whose eigenvalue
    $\Lambda_n$ tends to infinity. Since $\|f_n\|_{L^2}=1$, the equality
    $P_{\lambda_n}F=f_n$ implies $\|P_{\lambda_n}F\|_{L^2}=1$.
    But if $F \in \mathscr C^\infty(G)$, then for every $N\geq0$ the Fourier components of
    $F$ satisfy rapid decay with respect to $\Lambda_n$. In particular, $\Lambda_n^N\|P_{\lambda_n}F\|_{L^2}$
    must remain bounded as $n\to\infty$. This is impossible because
    $\Lambda_n \to \infty$. Therefore $u$ is not exact, thus defining a non-zero element of $H^1(G,\mathfrak v)$.

    Notice that we proved that $\dbarb : \mathscr C^\infty(G) \to \mathscr C^\infty(G, \underline \Lambda^1)$ does not have closed range, and consequently $H^1(G,\mathfrak v)$ is non-Hausdorff in the quotient topology, which prevents $H^1(G,\mathfrak v)$ from being finite-dimensional, and the result is established.

\bibliographystyle{alpha}
\bibliography{references.bib, other_references.bib}

\end{document}